\documentclass[a4paper,11pt]{scrartcl}

\usepackage[english]{babel}
\usepackage[utf8]{inputenc}
\usepackage{ulem} 

\usepackage{authblk}

\usepackage{hyperref}

\usepackage{tikz}
\usetikzlibrary{intersections,positioning}
\usetikzlibrary{intersections, quotes, angles}
\usepackage{color}

\usepackage{amssymb}
\usepackage{amsmath}
\usepackage{amsthm}
\usepackage{bbm}

\usepackage{mathtools}
\usepackage{nicefrac}
\usepackage{comment}
\usepackage{enumitem}
\usepackage{marginnote}

\KOMAoptions{abstract=on} 


\theoremstyle{plain}
\newtheorem{theorem}{Theorem}[section]
\newtheorem{lemma}[theorem]{Lemma}
\newtheorem{prop}[theorem]{Proposition}
\newtheorem{corolla}[theorem]{Corollary}
\newtheorem{probl}[theorem]{Open Problem}
\newtheorem*{que*}{Question}

\theoremstyle{definition}
\newtheorem{defi}[theorem]{Definition}

\newtheorem{nota}[theorem]{Notation}

\theoremstyle{remark}
\newtheorem{rem}[theorem]{Remark}
\newtheorem{es}[theorem]{Example}


\numberwithin{equation}{section} 





\newcommand{\C}{\mathbb{C}}

\newcommand{\K}{\mathbb{K}}

\newcommand{\N}{\mathbb{N}}

\newcommand{\Q}{\mathbb{Q}}
\newcommand{\R}{\mathbb{R}}


\newcommand{\caK}{\mathcal{K}}
\newcommand{\caL}{\mathcal{L}}


\newcommand{\uC}{\mathrm{C}}

\newcommand{\uL}{\mathrm{L}}


\newcommand{\ue}{\mathrm{e}}

\newcommand{\ui}{\mathrm{i}}

\makeatletter
\newcommand{\allmodesymb}[2]{\relax\ifmmode{\mathchoice
{\mbox{\fontsize{\tf@size}{\tf@size}#1{#2}}}
{\mbox{\fontsize{\tf@size}{\tf@size}#1{#2}}}
{\mbox{\fontsize{\sf@size}{\sf@size}#1{#2}}}
{\mbox{\fontsize{\ssf@size}{\ssf@size}#1{#2}}}}
\else
\mbox{#1{#2}}\fi}
\makeatother




\renewcommand{\subset}{\subseteq}






\newcommand{\de}{\,\mathrm{d}}
\newcommand{\st}{\,:\,}
\newcommand{\sto}[1]{\xrightarrow[\,\, #1 \,\,]{}}

\newcommand{\rpl}{\mathbb{R}_{\ge 0}}
\newcommand{\rplc}{\overline{\mathbb{R}}_{\ge 0}}

\newcommand{\vertiii}[1]{{\left\vert\kern-0.25ex\left\vert\kern-0.25ex\left\vert #1 \right\vert\kern-0.25ex\right\vert\kern-0.25ex\right\vert}}
\newcommand{\abs}[1]{\lvert#1\rvert}

\newcommand{\norm}[1]{\lVert#1\rVert}
\newcommand{\nnorm}[1]{\big\lVert#1\big\rVert}
\newcommand{\Norm}[1]{\Big\lVert#1\Big\rVert}
\newcommand{\normm}[1]{\bigg\lVert#1\bigg\rVert}


\makeatletter
\DeclareRobustCommand\widecheck[1]{{\mathpalette\@widecheck{#1}}}
\def\@widecheck#1#2{%
    \setbox\z@\hbox{\m@th$#1#2$}%
    \setbox\tw@\hbox{\m@th$#1%
       \widehat{%
          \vrule\@width\z@\@height\ht\z@
          \vrule\@height\z@\@width\wd\z@}$}%
    \dp\tw@-\ht\z@
    \@tempdima\ht\z@ \advance\@tempdima2\ht\tw@ \divide\@tempdima\thr@@
    \setbox\tw@\hbox{%
       \raise\@tempdima\hbox{\scalebox{1}[-1]{\lower\@tempdima\box
\tw@}}}%
    {\ooalign{\box\tw@ \cr \box\z@}}}
\makeatother


\DeclareMathOperator{\dom}{dom}

\DeclareMathOperator{\re}{Re}          
\DeclareMathOperator{\im}{Im}          
\DeclareMathOperator{\lip}{Lip}

\DeclareMathOperator{\interior}{int}

\DeclareMathOperator{\supp}{supp}



\begin{document}

\normalem 

\title{On Eventual Regularity Properties of Operator-Valued Functions}
\author{Marco Peruzzetto\footnote{\textit{Email address:} \texttt{peruzzetto@math.uni-kiel.de}}}
\affil{Christian-Albrechts-Universität zu Kiel}
\date{} 
\begin{titlepage}
\maketitle
\vspace{-1.5cm}
\begin{center}
\emph{Dedicated to Rainer Nagel on the Occasion of His 80\textsuperscript{th} Birthday}
\end{center}
\vspace{1cm}
\begin{abstract}
Let $\caL(X;Y)$ be the space of bounded linear operators from a Banach space $X$ to a Banach space $Y$. Given an operator-valued function $u:\R_{\geq 0}\rightarrow \caL(X;Y)$, suppose that every orbit $t\mapsto u(t)x$ has a regularity property (e.g.\ continuity, differentiability, etc.) on some interval $(t_x,\infty)$ in general depending on $x\in X$.
In this paper we develop an abstract set-up based on Baire-type arguments which allows, under certain conditions, removing the dependency on $x$ systematically.

Afterwards, we apply this theoretical framework to several different regularity properties that are of interest also in semigroup theory. In particular, a generalisation of the prior results on eventual differentiability of strongly continuous functions $u:\R_{\geq 0}\rightarrow \caL(X;Y)$ obtained by Iley and Bárta follows as a special case of our method.\\\\
\textbf{MSC2020}: 47A56, 54E52, 46A99; 47D06, 54A10.\\
\textbf{Keywords}: operator-valued function, operator family; eventually differentiable semigroup, orbits, regularity property; Baire, extended seminorms.
\end{abstract}
\end{titlepage}


\section{Introduction}
Let $\caL(X;Y)$ be the space of bounded linear operators from a Banach space $X$ to a Banach space $Y$ and let $u:\R_{\ge 0}\rightarrow\caL(X;Y)$ be an operator-valued function defined on the positive real axis, which we imagine representing the time. \\
Suppose $\mathtt{P}$ is a property that a vector-valued function defined on an interval may have or not. 
We think of $\mathtt{P}$ as a regularity property like boundedness, continuity, differentiability, etc.. Assume that for each $x\in X$ there is a time $t_x\ge 0$ such that the vector-valued orbit $t\mapsto u(t)x$ has $\mathtt{P}$ on the interval $(t_x,\infty)$. In this paper we find sufficient conditions on $\mathtt{P}$ in order to guarantee the existence of a ``universal'' moment $t\ge 0$, independent of $x$, such that for every $x\in X$ the orbits $t\mapsto u(t)x$ have $\mathtt{P}$ on $(t,\infty)$.
If this is possible, we will say that ``individual eventual regularity'' implies ``uniform eventual regularity''.

Such a question arose originally from semigroup theory, where situations of this type appear in many cases. In this context, a prominent example for $\mathtt{P}$ is differentiability. 
In the literature, see e.g.\ \cite[Chapter~II.4.b]{EnNa}, a $C_0$-semigroup $T$ is said to be \textit{eventually differentiable} if the equivalent conditions (i)--(iii) of the following elementary proposition (see also Lemma 4.2 and Corollary 4.3 in Chapter 2 of \cite{pazy}) are satisfied for some $t_0\ge 0$.

\begin{prop}\label{prop pazy}
Let $T= \big(T(t)\big)_{t\ge 0}$ be a $C_0$-semigroup on a Banach space $X$ with generator $A$ having domain $\dom(A)\subset X$. Then, the following assertions are equivalent for $t_0\ge 0$:
\begin{enumerate}[label=(\roman*), font=\normalfont, noitemsep]
\item For each $x\in X$, the orbit $t\mapsto T(t)x$ is differentiable on $(t_0,\infty)$;
\item For each $t>t_0$, the inclusion $T(t) X\subset \dom(A)$ is satisfied;
\item The semigroup $T$ is continuous on $(t_0,\infty)$ with respect to the operator norm.
\end{enumerate}
Moreover, if \textup{(i)--(iii)} are satisfied, then the following assertion holds true for each natural number $k\ge 1$:
\begin{enumerate}[font=\normalfont, noitemsep]
\item[(iv)] The semigroup $T$ is $k$-times differentiable on $\big((k+1)t_0,\infty\big)$ with respect to the operator norm.
\end{enumerate}
\end{prop}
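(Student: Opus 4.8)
The plan is to run the cycle $(\mathrm{ii})\Rightarrow(\mathrm{i})\Rightarrow(\mathrm{iii})\Rightarrow(\mathrm{ii})$, with the closedness of the generator $A$ as the recurring tool, and afterwards to bootstrap to $(\mathrm{iv})$ via the semigroup law. For $(\mathrm{ii})\Rightarrow(\mathrm{i})$, fix $x\in X$ and $s>t_0$, pick $\varepsilon\in(0,s-t_0)$, and put $y:=T(s-\varepsilon)x\in\dom(A)$; since the orbit through a vector of $\dom(A)$ is $C^1$ on $[0,\infty)$ with derivative $r\mapsto AT(r)y=T(r)Ay$, the orbit $r\mapsto T(r)x=T(r-s+\varepsilon)y$ is differentiable at $r=s$, and $s>t_0$ was arbitrary. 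For $(\mathrm{i})\Rightarrow(\mathrm{ii})$, observe that for $t>t_0$ the right difference quotient $h^{-1}(T(t+h)x-T(t)x)=h^{-1}(T(h)-I)T(t)x$ has a limit as $h\downarrow 0$, which is precisely the assertion $T(t)x\in\dom(A)$. Two by-products I will reuse: for $t>t_0$ the operator $AT(t)$ is closed (because $A$ is closed and $T(t)\in\caL(X)$) and everywhere defined (by $(\mathrm{ii})$), hence $AT(t)\in\caL(X)$ by the closed graph theorem; and writing $AT(t)x=T(t-a)AT(a)x$ for $t_0<a<t$ shows $t\mapsto AT(t)x$ is continuous on $(t_0,\infty)$, so the orbits are in fact $C^1$ there with derivative $AT(\cdot)x$. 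Then $(\mathrm{i})\Rightarrow(\mathrm{iii})$ comes quickly: the uniform boundedness principle makes $t\mapsto\norm{AT(t)}_{\caL(X)}$ locally bounded on $(t_0,\infty)$, and the fundamental theorem of calculus for the $C^1$ orbits gives, for $t_0<s<t$, that $T(t)x-T(s)x=\int_s^t AT(\sigma)x\de\sigma$ and hence $\norm{T(t)-T(s)}_{\caL(X)}\le\abs{t-s}\,\sup_{\sigma\in[s,t]}\norm{AT(\sigma)}_{\caL(X)}$; thus $T$ is locally Lipschitz --- a fortiori continuous --- in the operator norm on $(t_0,\infty)$.

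The step I expect to be the genuine obstacle is the remaining implication $(\mathrm{iii})\Rightarrow(\mathrm{ii})$: turning mere operator-norm regularity of $T$ on $(t_0,\infty)$ into the range inclusion $T(t)X\subset\dom(A)$. The natural device is the smoothing family $R_h:=h^{-1}\int_t^{t+h}T(\sigma)\de\sigma$ for $t>t_0$ and small $h>0$. Since $\int_0^h T(\sigma)x\de\sigma\in\dom(A)$ with $A\int_0^h T(\sigma)x\de\sigma=T(h)x-x$, and since $T(t)$ leaves $\dom(A)$ invariant, one obtains $R_hX\subset\dom(A)$ with $AR_h=h^{-1}(T(t+h)-T(t))\in\caL(X)$, while $R_h\to T(t)$ in $\caL(X)$ as $h\downarrow 0$ by the norm-continuity in $(\mathrm{iii})$. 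One then wants the closedness of $A$ to force $T(t)x\in\dom(A)$, which amounts to controlling $\lim_{h}AR_hx$ for every $x$; doing this without merely re-asking for the right derivative is the delicate point, and is where the full strength of $(\mathrm{iii})$ --- together with a Baire / closed-range type argument on the operator families at hand --- has to be invested.

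Finally, $(\mathrm{iv})$ follows by an induction establishing, for $m\ge 0$, the statement $C_m$: \emph{$T$ is $m$-times operator-norm differentiable on $((m+1)t_0,\infty)$ with $T^{(m)}(t)=A^m T(t)$ there}. Here $C_0$ is exactly $(\mathrm{iii})$, and one first records that $A^m T(s)\in\caL(X)$ whenever $s>mt_0$: decomposing $T(s)=T(s/m)\cdots T(s/m)$ into $m$ equal factors, each mapping $X$ into $\dom(A)$, and using that $A$ commutes with the $T(\cdot)$ on $\dom(A)$, one gets $A^m T(s)=(AT(s/m))^m$, a product of bounded operators. The base case $C_1$ is deduced from $C_0$ by applying the fundamental-theorem estimate from $(\mathrm{i})\Rightarrow(\mathrm{iii})$ to $\tau\mapsto AT(t+\tau)=(AT(s))\,T(r+\tau)$, which is operator-norm continuous at $\tau=0$ because $r>t_0$ ($t=s+r$, $s,r>t_0$); and the step $C_{m-1}\Rightarrow C_m$ for $m\ge 2$ follows by writing $t=s+r$ with $s>(m-1)t_0$ and $r>2t_0$, so that $T^{(m-1)}(t+\tau)=(A^{m-1}T(s))\,T(r+\tau)$ near $\tau=0$ with $A^{m-1}T(s)$ bounded and $\tau\mapsto T(r+\tau)$ operator-norm differentiable at $\tau=0$ by $C_1$, whence $T^{(m-1)}$ is operator-norm differentiable at $t$ with $T^{(m)}(t)=(A^{m-1}T(s))(AT(r))=A^m T(t)$. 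The only mildly technical point here is the bookkeeping of these composition formulae and checking that each differentiation is valid in the operator norm rather than merely strongly.
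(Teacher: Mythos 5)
Your arguments for (ii)$\Rightarrow$(i), (i)$\Rightarrow$(ii), (i)$\Rightarrow$(iii) and for assertion (iv) are correct, and they are essentially the standard ones: the paper gives no proof of its own but points to Lemma~4.2 and Corollary~4.3 in Chapter~2 of \cite{pazy}, which rest on exactly the ingredients you use (boundedness of $AT(t)$ via the closed graph theorem, the identity $AT(t)x=T(t-a)AT(a)x$, the fundamental theorem of calculus combined with uniform boundedness, and the factorisation $A^kT(t)=\big(A^{k-1}T(s)\big)\big(AT(r)\big)$ driving the induction on $k$). Your bookkeeping of the intervals $((k+1)t_0,\infty)$ in the induction is also right.

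The step you flag as ``the genuine obstacle'', namely (iii)$\Rightarrow$(ii), is however a gap that cannot be closed: that implication is false, and no Baire or closed-range argument will rescue it, because eventual (even immediate) norm continuity is strictly weaker than eventual differentiability. The paper's own Example~\ref{example t_k unbounded} witnesses this: for the multiplication semigroup on $\uC_0(\rpl)$ with symbol $a(s)=-\log(1+s)+\ui s$ one has
\[
\norm{T(t+h)-T(t)}=\sup_{s\ge 0}\,(1+s)^{-t}\,\Abs{\ue^{ha(s)}-1}\;\sto{h\to 0}\;0 \qquad\mbox{for every } t>0,
\]
so (iii) holds with $t_0=0$, whereas $AT(t)$ is bounded only for $t\ge 1$, so (i) and (ii) fail for every $t_0<1$. (A diagonal semigroup on $\ell^2$ with eigenvalues $-\log\log n+\ui n$ is even norm continuous on all of $(0,\infty)$ without being eventually differentiable at all.) Your own $R_h$ computation already reveals the problem: $AR_h$ \emph{is} the right difference quotient whose convergence you are trying to establish, so (iii) contributes nothing beyond $R_h\to T(t)$ in norm. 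The statement that can actually be proved --- and the one supported by the cited results of Pazy --- is that (i) and (ii) are equivalent and together imply (iii) and (iv); the reverse implication from (iii) must be dropped, so you should not search for an argument there.
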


\noindent Later, we will call eventually differentiable semigroups also \textit{uniformly eventually differentiable} in order to highlight the independence from $x$ of the starting time of the differentiability of the orbits.
In the 1968 paper \cite{pazya}, Pazy used assertion (iv) of Proposition \ref{prop pazy} to characterize eventually differentiable $C_0$-semigroups through the spectral properties of their generators (see \cite[Theorem~4.7 and Theorem~4.8]{pazy} and \cite[Chapter II, Theorem~4.14]{EnNa}). His result has made it possible to find several non-trivial examples of eventually differentiable semigroups, like semigroups for delay differential equations or Volterra equations, which has been studied extensively e.g.\ in \cite[Chapter~VI.6]{EnNa}, \cite{batty}, \cite{batty1} and \cite{barta1}. \\
In 2007 Batty proposed an alternative way for characterizing eventual differentiability of a $C_0$-semigroup $T$ by formulating the following question.

\begin{que*}[{\cite[Open~Question~2]{batty}}]
Suppose that, for each $x\in X$, there exists $t_x\ge 0$ such that the orbit $t\mapsto T(t)x$ is differentiable on $(t_x,\infty)$. Is the semigroup $T$ eventually differentiable?
\end{que*}

Observe that the advantage of this approach, which we pursue in this article, is that it does not make use of any specific tool like the generator and therefore it lends itself to be applied also to more general operator families than merely $C_0$-semigroups. \\
Batty's question was answered positively by Iley in \cite[Proposition~2.2]{iley} and subsequently generalized by Bárta in \cite{barta} to arbitrary strongly continuous mappings. However, in contrast to Iley, who employed a Baire-type argument, the proof of Bárta is built on a gliding hump argument reminiscent of Hahn's proof of the uniform bounded principle. The question, whether a parallel proof relying on a Baire-type argument exists, arose naturally and marked the starting point for this article.

Here, we not only obtain a further generalisation of Bárta's theorem (see Proposition \ref{differentiab} in Section \ref{section differentiability}), but we also provide an abstract set-up into which all known results can be immersed. We consequently develop a new method for tackling the problem of the equivalence between individual and uniform eventual regularity. This allows us to move beyond differentiability, and to consider many other situations to which this method can be applied.

\subsection*{Structure of the article}
We start Section 2 by making clear what it means for a vector-valued function to have a regularity property $\mathtt{P}$ individually eventually and uniformly eventually (see Definition \ref{definition of P}). Afterwards different examples and counterexamples which indicate how linearity represents a sort of watershed for problems of this kind are provided: the expectation is that one has equivalence between individual regularity and uniform regularity if and only if the considered property $\mathtt{P}$ is linear (see Definition \ref{defi property linear}). 

In the second part of Section 2 we investigate subspaces of a topological vector space that can be associated with a family of extended seminorms in respect to which they satisfy particular conditions of finiteness and convergence. We therefore introduce the new concept of $P$-\textit{subspaces} (see Definition \ref{definition P subspace}) and prove the main abstract result of this paper (Proposition \ref{prop baire}), consisting of a Baire-type theorem specifically for $P$-subspaces.

Finally, in Section \ref{section applications} the abstract tools from the previous section are applied; the key insight here is that many regularity properties can be rephrased in terms of union of $P$-subspaces. As a result, we obtain that uniform eventual regularity is implied by individual eventual regularity whenever for ``regularity'' we take e.g.\ one of the following notions:
\begin{enumerate}[label=(\alph*), font=\normalfont, noitemsep]
\item local and global boundedness;
\item local and global $\alpha$-Hölder continuity, continuity, uniform continuity, local bounded variation, local and global absolute continuity;
\item $k$-differentiability and smoothness;
\item real analyticity, with further considerations when the semigroup property is present.
\end{enumerate}


\subsection*{Notation and Preliminaries}

For a number $a\in\R$ we write $\R_{\ge a}\coloneqq [a,\infty)$ and $\R_{>a}\coloneqq (a,\infty)$, and we use the notation $\rplc\coloneqq \rpl\cup\{+\infty\}$ for the two-point compactification of $\rpl$. \\
Given a set $S$, a function $f:S\rightarrow\rplc$ and $a\in\rplc$, we put
\[
[f\le a]\coloneqq \big\{x\in S\st f(x)\in [0,a]\big\} \quad\mbox{and}\quad [f < a]\coloneqq \big\{x\in S\st f(x)\in [0,a)\big\}.
\]
We write $\N$ for the positive natural numbers $\{1,2,3,\ldots\}$ and $\N_0\coloneqq\N\cup\{0\}$.

If not otherwise specified, $X$ and $Y$ are Banach spaces over $\R$ or $\C$ and $\caL(X;Y)$ is the space of bounded linear operators from $X$ to $Y$. We also use the abbreviation $\caL(X)\coloneqq \caL(X;X)$. For a function 
\[
u:\rpl\rightarrow\caL(X;Y)
\]
and a point $x\in X$ we denote by $u_x$ the \textit{orbit} of $x$ under $u$, that is, the mapping 
\[
u_x:\rpl\rightarrow Y,\quad u_x(t)\coloneqq u(t) x.
\]

Let $Z$ be a topological space and let $A\subset Z$. We recall that the set $A$ is called \textit{nowhere dense} if the interior of its closure is empty, in symbols $\interior(\overline{A})=\emptyset$. The set $A$ is called \textit{meagre} (or of \textit{first category)} if it is a countable union of nowhere-dense sets. If $A$ is not meagre in $Z$, then it is of \textit{second category} in $Z$. The space $Z$ is a \textit{Baire space} if every non-empty open set is of second category in $Z$. A Baire space cannot be written as countable union of meagre sets. Moreover, a normed vector space is a Baire space if and only if it is of second category in itself. 

A general form of the Baire category theorem asserts that any complete pseudo-metric space and any locally compact regular space is a Baire space (cf. \cite[Theorem~34]{kelley}),
but for the purpose of this article it suffices to use the following simple version.

\begin{theorem}[Baire category theorem -- simple version]\label{meagre}
${}$ \\
Every non-empty complete metric space is a Baire space.
\end{theorem}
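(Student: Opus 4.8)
The plan is to establish the following standard reformulation and then carry out the classical nested-ball construction. Recall that a subset $A$ of a topological space $Z$ is nowhere dense precisely when $Z\setminus\overline{A}$ is dense and open; hence, taking complements, a countable union of nowhere-dense sets has empty interior as soon as we know that any countable intersection of dense open subsets of $Z$ is dense. Thus it suffices to prove: if $U_1,U_2,\dots$ are dense open subsets of a non-empty complete metric space $(Z,d)$, then $\bigcap_{n\in\N}U_n$ is dense. Deducing from this the definition of ``Baire space'' used in the excerpt — that no non-empty open set is meagre — is then a routine complementation argument with De Morgan's laws, and I would only sketch it.

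To prove the reformulation, fix a non-empty open set $W\subseteq Z$; the goal is to exhibit a point of $W\cap\bigcap_n U_n$. Since $W$ is open and non-empty I may choose $x_0\in W$ and $0<r_0\le 1$ with $\overline{B(x_0,r_0)}\subseteq W$, where $B(x,r)$ denotes the open ball of centre $x$ and radius $r$. Then I would build recursively points $x_n$ and radii $r_n$ with $0<r_n\le r_{n-1}/2$ such that $\overline{B(x_n,r_n)}\subseteq U_n\cap B(x_{n-1},r_{n-1})$: this is possible because $U_n$ is dense and open, so $U_n\cap B(x_{n-1},r_{n-1})$ is a non-empty open set, and any point of it admits a small closed ball around it still contained in it. By construction the closed balls $\overline{B(x_n,r_n)}$ are nested and their radii tend to $0$, so $(x_n)_{n\in\N}$ is Cauchy; completeness furnishes a limit $x\in Z$. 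For each fixed $n$, all terms $x_m$ with $m\ge n$ lie in the closed set $\overline{B(x_n,r_n)}$, whence $x\in\overline{B(x_n,r_n)}\subseteq U_n$, and also $x\in\overline{B(x_0,r_0)}\subseteq W$; therefore $x\in W\cap\bigcap_n U_n$, as desired.

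I do not anticipate a genuine obstacle: the construction is textbook. The only points requiring slight care are, first, shrinking the radius at each step strictly enough that the \emph{closed} ball of stage $n$ sits inside the \emph{open} ball of stage $n-1$ (so that the nesting, hence the Cauchy property, genuinely holds), and second, observing that the limit point lands in each $\overline{B(x_n,r_n)}$ with no further closure needed, these balls being closed already. Translating the conclusion back to the phrasing ``every non-empty open set is of second category'', via the nowhere-dense / dense-open correspondence recalled above, is the remaining piece of routine bookkeeping and uses nothing beyond the definitions stated just before the theorem.
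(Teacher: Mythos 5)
Your proof is correct: the reduction from ``every non-empty open set is of second category'' to ``countable intersections of dense open sets are dense'' is the right complementation, and the nested-closed-ball construction with radii halving at each stage is carried out with the necessary care (closed ball of stage $n$ inside the open ball of stage $n-1$, limit point caught in every closed ball). The paper itself offers no proof of this statement --- it quotes the simple version as a classical fact and refers to Kelley's more general theorem for complete pseudo-metric and locally compact regular spaces --- so there is nothing to compare against beyond noting that yours is the standard textbook argument, which suffices here.
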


\section{The abstract framework}
The goal of this paragraph is to introduce the notion of $P$-subspaces together with their main features, and to establish a connection with linear regularity properties. 

\subsection{Regularity properties}

The following definition is inspired by Definition 4.1 in \cite{dgk}. 

\begin{defi}\label{definition of P}
Let $u:\rpl\rightarrow\caL(X;Y)$ be a function and let $\mathtt{P}$ be a regularity property such that, for every $x\in X$, it is sensible to assert that the orbit $u_x$ has $\mathtt{P}$ on an interval $I\subset\rpl$ (see also Remark \ref{remark definition}). We say that $u$ has
\begin{enumerate}[label=(\alph*), font=\normalfont]
\item \textit{individually eventually} the property $\mathtt{P}$ if for each $x\in X$ there is $t_x\in\rpl$ such that the orbit $u_x$ has the property $\mathtt{P}$ on the interval $(t_x,\infty)$;
\item \textit{uniformly eventually} the property $\mathtt{P}$ if there is $t\in\rpl$ such that for all $x\in X$ the orbit $u_x$ has the property $\mathtt{P}$ on the interval $(t,\infty)$.
\end{enumerate}
\end{defi}

\begin{rem}\label{remark definition}
Regularity properties that are pointwise defined can also be considered as properties on an interval: in this situation, it suffices to interpret the assertion ``$u_x$ has $\mathtt{P}$ on $I$'' as ``$u_x$ has $\mathtt{P}$ at each point $t\in I$''. This is for example the case in differentiability, because a function is, by definition, differentiable on an interval $I$ if it is differentiable at each point $t\in I$.\\
We point out that for global properties, like boundedness, the situation is of course different: asserting that a function is bounded on $I$ is not the same as saying that it is bounded at every point $t\in I$.
\end{rem}

While (b) always implies (a), our aim is to find conditions on the regularity property $\mathtt{P}$ in order to guarantee that also the converse implication holds true. By defining for each $n\in\N$ the set 
\begin{align}\label{subsets En}
E_n\coloneqq \big\{x\in X\st u_x\mbox{ has the property } \mathtt{P}\mbox{ on } \R_{> n} \big\},
\end{align}
we observe that (a) just means that 
\begin{align}\label{union}
\bigcup_{n\in\N} E_n=X,
\end{align}
while (b) asserts  that there is some $n_0\in\N$ such that $E_{n_0}=X$. 

In the context of Remark \ref{remark definition}, it is useful to bear in mind that, for pointwise properties, each set $E_n$ can be rewritten as 
\[
E_n=\bigcap_{s > n} \big\{x\in X\st u_x\mbox{ has the property } \mathtt{P}\mbox{ at } s \big\}.
\]

\begin{defi}\label{defi property linear}
We call a regularity property $\mathtt{P}$ \textit{linear} if the set $E_n$ defined in \eqref{subsets En} is a linear subspace of $X$ for each $n\in\N$.
\end{defi}
  
If we give up linearity, it is not difficult to find properties $\mathtt{P}$ such that (a) and (b) are not equivalent. Moreover, in this case not even the structure of $X$ helps, as Example \ref{counterexample} shows, where in (i) $X$ is finite-dimensional and in (ii) $X$ is a Banach lattice. 

\begin{es}\label{counterexample}
${}$
\begin{enumerate}[label=(\roman*), font=\normalfont]
\item On the Banach space $\R^2$ consider the operator-valued function
\[
u:\rpl\rightarrow\caL(\R^2), \qquad u(t)\coloneqq 
\left(
\begin{array}{cc}
t^2 & t \\ 0 & 0.
\end{array}
\right)
\]
and the regularity property $\mathtt{P}$, defined as follows:
\[
u_x \mbox{ has } \mathtt{P}\mbox{ on } I \quad :\Leftrightarrow\quad u_x \mbox{ is injective or identically zero on } I,
\]
where $I\subset\rpl$ is an interval and $x\in\R^2$.
It is easy to see that $u$ has individually eventually the property $\mathtt{P}$ but not uniformly. This property is clearly not linear.
\item Suppose that $X$ and $Y$ are Banach lattices. For $u:\rpl\rightarrow\caL(X;Y)$ consider the regularity property $\mathtt{P}$ defined by
\[
u_x \mbox{ has } \mathtt{P}\mbox{ at } t \quad :\Leftrightarrow\quad u_x(t)\ge 0 \mbox{ whenever } x\ge 0,
\]
where for $t\in\rpl$ and $x\in X$.
Here, $\mathtt{P}$ should be interpreted as a pointwise property, see also Remark \ref{remark definition}. Observe that $\mathtt{P}$, called \textit{positivity}, is in general not linear. Moreover, it is possible to construct $C_0$-semigroups which are eventually positive individually, but not uniformly; see \cite[Example~5.7 and Example~5.8]{dgk}.
\end{enumerate}
\end{es}

On the contrary, if $\mathtt{P}$ is linear, then (a) and (b) are always equivalent whenever $X$ is finite-dimensional and, even if $X$ is infinite-dimensional, it seems to us to be difficult to find a counterexample. 
The reason lies in the following facts. If (a) is satisfied i.e., \eqref{union} holds true, then the Baire category theorem (Theorem \ref{meagre}) guarantees that there is some $n_0\in\N$ such that $E_{n_0}$ is of second category. Now, if (b) is not satisfied, then $E_{n_0}\neq X$. Hence, in order to find a counterexample, one needs to handle with a proper subspace of second category, which is therefore necessarily incomplete. Actually, it is easy to show that $X$ has proper subspaces of second category: thanks to the Kuratowski-Zorn lemma one proves the existence of a discontinuous functional, whose kernel is of this kind. Unfortunately, as far as we know, until nowadays there are only existence proofs and no \textit{concrete examples} of a second category incomplete normed vector space. The available information does not appear to us to be useful for constructing a counterexample. We consequently formulate the following open question.

\begin{probl}
Do there exist Banach spaces $X,Y$, a function $u:\rpl\rightarrow\caL(X;Y)$ and a linear property $\mathtt{P}$ such that (a) and (b) from Definition \ref{definition of P} are not equivalent?
\end{probl}

In order to show the equivalence of (a) and (b), the Baire category theorem turns out to be very useful. It can be applied if one shows that each subspace $E_n$ is either equal to $X$ or it is meagre. This happens for example, if all spaces $E_n$ are closed, which is the case of many usual properties (as in Example \ref{example}).

\begin{es}\label{example}
${}$
\begin{enumerate}[label=(\roman*), font=\normalfont]
\item An operator-valued function $u:\rpl\rightarrow\caL(X;Y)$ is uniformly eventually zero if there exists $t_0\in\rpl$ such that $u(t)=0$ for all $t\ge t_0$ (a uniformly eventually zero $C_0$-semigroup is also called \textit{nilpotent}; see \cite[Chapter~I.4.17]{EnNa}). This is equivalent to saying that $u$ has uniformly eventually the (pointwise defined) property $\mathtt{P}$ where,
\[
u_x \mbox{ has property } \mathtt{P}\mbox{ at } t \quad :\Leftrightarrow\quad u_x(t)=0.
\]
for $x\in X$ and $t\in\rpl$.
Using the Baire category theorem one can easily prove that $u$ is uniformly eventually zero if it is individually eventually zero. Indeed, put
\[
E_n\coloneqq\big\{x\in X\st u_x(t)=0 \mbox{ whenever } t\in\R_{> n}\big\}=\bigcap_{t > n}\ker u(t).
\]
Clearly, each vector space $E_n$ is closed. Thus, as $\bigcup_{n\in\N} E_n =X$, it follows from the Baire category theorem that there is some $n_0\in\N$ with $\interior(E_{n_0})\neq \emptyset$, and therefore $E_{n_0}=X$, which is the claim.
\item An operator-valued function $u:\rpl\rightarrow\caL(X;Y)$ is individually eventually strongly measurable if and only if it is uniformly eventually strongly measurable. Indeed, for all $n\in\N$ put
\[
E_n\coloneqq\big\{x\in X\st u_x \mbox{ is strongly measurable on } \R_{>n}\big\}.
\]
By \cite[Theorem~3.5.4.(3)]{HiPh}, each vector space $E_n$ is closed. Again, as $\bigcup_{n\in\N} E_n=X$, the Baire category theorem assures that there is $n_0\in\N$ with $E_{n_0}=X$, which shows the claim.
\item Given an operator-valued function $u:\rpl\rightarrow\caL\big(\caL(X;Y)\big)$, let the property $\mathtt{P}$ be defined in the following way:
\[
u_T \mbox{ has property } \mathtt{P}\mbox{ at } t \quad :\Leftrightarrow\quad u_T(t)\mbox{ is a compact operator}
\]
for $t\in\rpl$ and $T\in\caL(X;Y)$.
Clearly, $\mathtt{P}$ is linear, and $u$ has individually eventually the property $\mathtt{P}$ if and only if it has $\mathtt{P}$ uniformly eventually. Indeed, the vector spaces 
\[
E_n\coloneqq\big\{T\in\caL(X;Y)\st u_{T}(t)\mbox{ is compact for each } t\in\R_{> n}\big\}
\]
are closed and $\bigcup_{n\in\N} E_n= X$. Thus, the Baire category theorem can be applied and yields $E_n=X$ for each $n\ge n_0$ for some $n_0\in\N$, which is the claim. \\
Alternatively, (iii) can be obtained as a special case of (i), by passing to the quotient space $\caL(X;Y)/\caK(X;Y)$, where $\caK(X;Y)$ denotes the subspace of $\caL(X;Y)$ consisting of the compact operators.
\item The following example is similar to (iii). Consider an operator-valued function $u:\rpl\rightarrow\caL\big(\caL(X;Y)\big)$ and let the property $\mathtt{P}$ be defined in the following way: 
\[
u_T \mbox{ has property } \mathtt{P}\mbox{ at } t \quad :\Leftrightarrow\quad u_T(t)\mbox{ is a finite-rank operator}
\]
for $t\in\rpl$ and $T\in\caL(X;Y)$.
Then, $\mathtt{P}$ is linear. Moreover, the vector spaces
\[
E_n\coloneqq \big\{T\in\caL(X;Y)\st u_{T}(t)\mbox{ has finite rank for each } t\in\R_{> n}\big\}
\]
are meagre if they are not equal to $\caL(X;Y)$. Indeed, suppose $E_n\neq \caL(X;Y)$ for some $n\in\N$. As $E_n=\bigcap_{t > n} G_t$, where 
\[
G_t\coloneqq \big\{T\in\caL(X;Y)\st u_{T}(t)\mbox{ has finite rank}\big\},
\]
there exists $t_0\in\R_{> n}$ with $G_{t_0}\neq \caL(X;Y)$. As every subset of a meagre set is meagre, it suffices to show that $G_{t_0}$ is meagre. Observe that $G_{t_0}=\bigcup_{k\in\N} H_k$, where
\[
H_k\coloneqq \big\{T\in\caL(X;Y)\st u_T(t_0) \mbox{ has rank less or equal to } k\big\}
\]
As the subspace $G_{t_0}$ is assumed to be proper, it has empty interior. On the other hand, $H_k$ is closed and contained in $G_{t_0}$ for each $k\in\N$. Hence, $H_k$ is nowhere-dense and $G_{t_0}$ is meagre. Thus, by the Baire category theorem $u$ has individually eventually the property $\mathtt{P}$ if and only if it has $\mathtt{P}$ uniformly eventually.
\end{enumerate}
\end{es}

Example \ref{example} shows that closedness is particularly convenient for applying the Baire category theorem. However, there are other interesting linear properties (like those in Section \ref{section applications}) such that the correspondent vector spaces $E_n$, when proper, are not necessarily closed or neither contained in a proper closed subspace. Nevertheless, it is still possible to show that each of these vector spaces is meagre if closedness can be replaced suitably: for this reason, we introduce now a new class of subspaces of a topological vector space. Later, in Section \ref{section applications}, we provide several examples of subspaces belonging to this class.

\subsection{Extended seminorms and \texorpdfstring{$P$}{P}-subspaces}

In the most part of the article, extended seminorms will play a prominent role.
A function $p:V\rightarrow\rplc$ on a vector space $V$ over $\K\in\{\R, \C\}$ is called an \textit{extended seminorm} on $V$ if it satisfies 
\begin{enumerate}[label=(\alph*), font=\normalfont]
\item $p(\lambda x)=\abs{\lambda}p(x)$ for all $\lambda\in\K$ and $x\in V$;
\item $p(x+y)\le p(x)+p(y)$ for all $x,y\in V$;
\end{enumerate}
where we adopt the convention $0\cdot\infty=0$.

Vector spaces with norms that may attain infinite values were introduced and studied by Beer in \cite{beer}. The main motivation of that paper was to develop new functional analytic tools in order to achieve a better understanding of structures like vector spaces of continuous functions defined on a non-compact space. His work was carried on by Salas and Tapia-García, who gave in \cite{salasgarcia} the definition of extended seminorms for the first time and studied thoroughly extended topological vector spaces and extended locally convex spaces. In both \cite{beer} and \cite{salasgarcia} Baire-type arguments are used; unfortunately they do not seem to be useful in the context of the present article. This is the content of the following remark.

\begin{rem}
Let $V$ be a vector space and let $\norm{\cdot}$ be an extended norm on $V$, i.e., an extended seminorm with the property that $\norm{x}=0$ if and only if $x=0$ ($x\in V$). By setting $V_{\mathrm{fin}}\coloneqq \big[\norm{\cdot}<\infty\big]$, Beer proved that $(V,\norm{\cdot})$ is complete if and only if $(V_{\mathrm{fin}}, \norm{\cdot}_{\vert V_{\mathrm{fin}}})$ is a Banach space; see \cite[Proposition~3.11]{beer} for more details. In this case $V$ is also called \textit{extended Banach space}. 

It is worthy to note that every extended Banach space is also a Baire space. Indeed, if $(V,\norm{\cdot})$ is an extended Banach space, then 
\[
d:V\times V\rightarrow\rpl, \qquad d(x,y)\coloneqq \min\big\{1, \norm{x-y}\big\}
\]
is a metric on $V$ having the same Cauchy sequences and inducing the same topology as $\norm{\cdot}$ (see e.g.\ \cite[Proposition~4.3.8]{engelking}). Therefore, the Baire category theorem can be applied to $V$. 
At this point, it is questionable whether a line of reasoning like that in Example \ref{example} can be pursued also for extended Banach spaces. For example, let $(X,\norm{\cdot}_X)$ be a Banach space and let $u:\rpl\rightarrow\caL(X)$ be a strongly continuous function. One can show that 
\[
\norm{\cdot}:X\rightarrow\rplc, \qquad \norm{x}\coloneqq \norm{u_x}_\infty+\sup_{0<s<t}\Norm{\frac{u_x(t)-u_x(s)}{t-s}}_X
\]
is a complete extended norm on $X$ with $X_{\mathrm{fin}}=\lip(\rpl;X)$, the Banach space of $X$-valued Lipschitz functions, and the subspaces $E_n$ defined in \eqref{spaces differentiabiliy} are $\norm{\cdot}$-closed. Almost the same idea has already been used for \cite[Theorem~4.2]{beerhoffman} (see also \cite[Example~2]{beer}).
At a first glance, it may appear that the Baire property of $(X,\norm{\cdot})$ can be used to show that $\bigcup_{n\in\N} E_n =X$ implies that $E_n=X$ for all $n\ge n_0$ for some $n_0\in\N$ as in Example \ref{example}. Unfortunately, this is not the case, as every  linear subspace of $X$ which contains $X_{\mathrm{fin}}$ is necessarily $\norm{\cdot}$-open, and so of second category, by \cite[Corollary~3.9]{beer}. Hence, in this case the Baire property does not yield any useful information.
More generally, the same arguments sketched above can be made also in the case when $X$ is a Hausdorff countable extended seminormed vector space. This follows from the results of \cite[Section~4.1]{salasgarcia} and \cite[Proposition~3.19]{salasgarcia}.

\end{rem}

The problem outlined in the above remark indicates that the Baire-type arguments are on conventional Banach spaces much more powerful. However, as already noted at the end of the previous section, the use  of the classical Baire category theorem has the serious drawback that one needs closedness of the involved subspaces. As an alternative strategy, we use extended seminorms to replace the closedness condition.

\begin{defi}\label{definition P subspace}
Let $V$ be a topological vector space and let $E\subset V$ be a subspace. We say that $E$ is a $P$-\textit{subspace}
of $V$ if the following situation occurs.
There exists an index set $I$ with a countable set $Q_i$ for each $i\in I$ and an extended seminorm $p_{i,q}:V\rightarrow\rplc$ for each $(i,q)\in \bigcup_{i\in I}\big( \{i\}\times Q_i\big)$ such that
\begin{enumerate}[label=(\roman*), font=\normalfont]
\item for all $i\in I$ and all $q\in Q_i$ the ball $[p_{i,q}\le 1]$ is closed in $V$;
\item the following inclusion is satisfied:
\[
E\subset \bigcap_{i \in I}\bigcup_{q\in Q_i} [p_{i,q}<\infty];
\]
\item for every net $(x_\alpha)_{\alpha}$ in $E$ converging to $x\in V$ the following implication holds true:
\[
\left(\forall\, i\in I \ \ \  \exists\, q_i\in Q_i \ \ \  p_{i,q_i}(x-x_\alpha)\sto{\alpha} 0\right) \quad\Rightarrow\quad x\in E.
\]
\end{enumerate}
\end{defi}

\begin{nota}\label{nota without index}
In order to simplify the notation, we will write no index $q_i$ for the set $Q_i$ (or $i$ for the set $I$) under $p$ whenever $Q_i$ (or $I$) is a singleton. If we have a sequence of $P$-subspaces $(E_n)_{n\in\N}$ and we consider for each $E_n$ a specific family of seminorm satisfying the conditions of Definition \ref{definition P subspace}, then we denote by $p_{n;i,q_i}$ the seminorms corresponding to the subspace $E_n$.
\end{nota}

\begin{rem}\label{rem family of seminorms}
Suppose that $E$ is a $P$-subspace such that each set $Q_i$ is a singleton. In this case, assertion (ii) from Definition \ref{definition P subspace} reduces to 
\[
E\subset F\coloneqq \bigcap_{i \in I}\, [p_{i}<\infty].
\]
In the terminology of Salas and Tapia-García, we have that $F=X_{\mathrm{fin}}$ (see \cite[(4) at p.~327 and Proposition~3.10]{salasgarcia}). Since each extended seminorm $p_i$ is a seminorm on $F$, the family $(p_i)_{i\in I}$ induces a locally convex topology $\tau_I$ on $F$. If $E$ is closed in $F$ with respect to the topology $\tau_I$, then assertion (iii) holds true for $E$. Moreover, if $I$ is countable, then $(F,\tau_I)$ is metrizable. Hence, in order to prove that $E$ is $\tau_I$-closed in $F$, it suffices to check that it is $\tau_I$-sequentially closed in $F$.
\end{rem}

\begin{rem}\label{rem condition (iv)}
In the context of Definition \ref{definition P subspace}, consider the following assertion.
\begin{itemize}
\item[(iv)] For every sequence $(x_n)_{n\in\N}$ in $E$ converging to $x\in V$ the following implication holds true:
\[
\left(\forall\, i\in I \ \ \  \exists\, q_i\in Q_i \ \ \  p_{i,q_i}(x-x_n)\sto{n\to \infty} 0\right) \quad\Rightarrow\quad x\in E.
\]
\end{itemize}
It is clear that if $E$ satisfies assertion (iii) from Definition \ref{definition P subspace}, then it satisfies also assertion (iv). On the other hand, assertions (iii) and (iv) are equivalent whenever the topological vector space $V$ satisfies the first axiom of countability. Indeed, suppose that $V$ is first-countable and let $(x_\alpha)_\alpha$ be a net in $E$ converging to $x\in V$. As $V$ is first-countable, $(x_\alpha)_\alpha$ admits a subsequence (i.e. a countable subnet) $(x_{\alpha_n})_{n\in\N}$ also converging to $x$. Now, for each $i\in I$, $\big(p_{i,q_i}(x-x_{\alpha_n})\big)_{n\in\N}$ is a subsequence of the net $\big(p_{i,q_i}(x-x_{\alpha})\big)_{\alpha}$. Hence, if $p_{i,q_i}(x-x_{\alpha})\sto{\alpha} 0$, then also $p_{i,q_i}(x-x_{\alpha_n})\to 0$ as $n\to\infty$. It follows that, if assertion (iv) holds true, then $x\in E$, which means that also assertion (iii) holds true for $E$.
\end{rem}

For the proof of the next two results we need the following lemma.

\begin{lemma}\label{seminorm contin}
Let $V$ be a topological vector space and let $p: V\rightarrow\rplc$ be an extended seminorm such that $\interior\big([p\le 1]\big)\neq\emptyset$. Then $p$ is continuous and $p(V)\subset\rpl$.
\end{lemma}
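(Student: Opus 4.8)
The plan is to leverage the nonempty interior to get a bound on $p$ near a point, translate this to a bound near the origin, upgrade the bound to continuity at $0$ via positive homogeneity, deduce finiteness of $p$ everywhere, and finally obtain continuity at every point from the reverse triangle inequality.

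First I would pick a point $x_0\in\interior\big([p\le 1]\big)$, so that there is an open neighbourhood $U$ of $x_0$ with $p(u)\le 1$ for all $u\in U$. Setting $W\coloneqq (U-x_0)\cap(x_0-U)$, which is a symmetric open neighbourhood of $0$, each $w\in W$ satisfies $w=(x_0+w)-x_0$ with $x_0+w\in U$; subadditivity together with $p(-x_0)=p(x_0)\le 1$ then gives $p(w)\le p(x_0+w)+p(x_0)\le 2$. Thus $p\le 2$ on the neighbourhood $W$ of $0$. Next, fixing $\varepsilon>0$ and using that multiplication by the nonzero scalar $\varepsilon/2$ is a homeomorphism of $V$, the set $\tfrac{\varepsilon}{2}W$ is again a neighbourhood of $0$, and by homogeneity $p\big(\tfrac{\varepsilon}{2}w\big)=\tfrac{\varepsilon}{2}p(w)\le\varepsilon$ for $w\in W$. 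Hence $p\le\varepsilon$ on $\tfrac{\varepsilon}{2}W$, which (since $p(0)=0$) shows that $p$ is continuous at $0$.

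From here, for an arbitrary $x\in V$ the continuity of $t\mapsto tx$ at $t=0$ yields some $\lambda>0$ with $\lambda x\in W$, so $\lambda\, p(x)=p(\lambda x)\le 2$ and therefore $p(x)\le 2/\lambda<\infty$; this proves $p(V)\subset\rpl$. Once $p$ is known to be real-valued, the reverse triangle inequality $\abs{p(x)-p(y)}\le p(x-y)$ holds for all $x,y\in V$. Given $x_1\in V$ and $\varepsilon>0$, I take the neighbourhood $W_\varepsilon$ of $0$ with $p\le\varepsilon$ on $W_\varepsilon$ produced above; then for $x\in x_1+W_\varepsilon$ one has $\abs{p(x)-p(x_1)}\le p(x-x_1)\le\varepsilon$, so $p$ is continuous at $x_1$.

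There is no genuine conceptual obstacle here; the only points requiring a little care are the bookkeeping around the value $+\infty$ (one must apply subadditivity with the convention $0\cdot\infty=0$ and, crucially, not invoke the reverse triangle inequality before finiteness of $p$ has been established), and the use of the defining property of a topological vector space that scaling by a fixed nonzero scalar is a homeomorphism, which is what makes the passage from a mere bound on $W$ to continuity at $0$ work.
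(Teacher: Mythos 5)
Your proof is correct and follows essentially the same route as the paper: translate the set $[p\le 1]$ to obtain a neighbourhood of $0$ on which $p$ is bounded, use that neighbourhoods of $0$ are absorbing to get $p(V)\subset\rpl$, use homogeneity for continuity at $0$, and subadditivity for continuity everywhere. The only difference is cosmetic: the paper outsources these standard topological-vector-space facts to Schaefer's book (using $\tfrac{1}{2}(U-u)\subset[p\le 1]$ to keep the bound at $1$), whereas you write them out explicitly with a bound of $2$.
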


\begin{proof}
Let $U\subset [p\le 1]$ be open and let $u\in U$. Then $\frac{1}{2}(U-u)$ is contained in $[p\le 1]$ and it is an open neighbourhood of $0$ by \cite[Chapter I, 1.1.(i)]{schaefer}. From \cite[Chapter I, 1.2]{schaefer}, it follows that $p(V)\subset\rpl$ and thus $p$ is continuous in $0$. Therefore, $p$ is also (uniformly) continuous by \cite[Chapter II, 1.6]{schaefer}.
\end{proof}

\begin{prop}\label{prop closed or meagre}
Let $V$ be a topological vector space. If $E\subset V$ is a $P$-subspace, then it is closed or meagre.
\end{prop}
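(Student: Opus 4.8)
The plan is to prove the dichotomy by contraposition: I assume $E$ is \emph{not} meagre in $V$ and show that then $E$ is closed, in fact that $E=V$. Throughout I use the data attached to the $P$-subspace $E$ by Definition \ref{definition P subspace}, i.e.\ the index set $I$, the countable sets $Q_i$, and the extended seminorms $p_{i,q}$ with closed balls $[p_{i,q}\le 1]$ satisfying (i)--(iii). The first observation is that $E$ is dense: since $E$ is not meagre it is not nowhere dense, so $\interior(\overline E)\neq\emptyset$; and since $E$ is a linear subspace, so is $\overline E$, while a linear subspace of a topological vector space that has nonempty interior must be the whole space (translate an interior point to the origin to obtain a neighbourhood of $0$ contained in $\overline E$, which is therefore absorbing, hence contains every vector). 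Thus $\overline E=V$.

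Next, for each fixed $i\in I$ I would locate a single ``good'' index $q_i\in Q_i$. From condition (ii) together with the identity $[p_{i,q}<\infty]=\bigcup_{n\in\N}[p_{i,q}\le n]$ one gets
\[
E\subset\bigcup_{q\in Q_i}\bigcup_{n\in\N}\big[p_{i,q}\le n\big],
\]
a \emph{countable} union since $Q_i$ is countable. Because $E$ is not meagre and a countable union of meagre sets is meagre, there exist $q_i\in Q_i$ and $n_i\in\N$ with $E\cap[p_{i,q_i}\le n_i]$ not meagre, hence not nowhere dense. Now $[p_{i,q_i}\le n_i]=n_i\,[p_{i,q_i}\le 1]$ is closed by (i) (scaling by $n_i\neq 0$ is a homeomorphism of $V$), so it contains the closure of $E\cap[p_{i,q_i}\le n_i]$; therefore $\interior\big([p_{i,q_i}\le n_i]\big)\neq\emptyset$ and, rescaling back, $\interior\big([p_{i,q_i}\le 1]\big)\neq\emptyset$. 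By Lemma \ref{seminorm contin}, $p_{i,q_i}$ is then continuous on all of $V$.

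Finally I invoke condition (iii). Let $x\in V$ be arbitrary; by density there is a net $(x_\alpha)_\alpha$ in $E$ with $x_\alpha\to x$, hence $x-x_\alpha\to 0$, and for every $i\in I$ continuity of $p_{i,q_i}$ gives $p_{i,q_i}(x-x_\alpha)\to p_{i,q_i}(0)=0$. So the hypothesis of the implication in (iii) is satisfied (with these $q_i$), whence $x\in E$. As $x$ was arbitrary, $E=V$, which is closed, and the dichotomy follows.

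I expect the middle step to be the main obstacle: one cannot reason directly with the subspaces $[p_{i,q}<\infty]$ from (ii), since these are in general neither closed nor meagre, so non-meagreness of $E$ by itself does not produce an interior point lying in a \emph{closed} set. The remedy is precisely to split each $[p_{i,q}<\infty]$ into the countable family of closed balls $[p_{i,q}\le n]$ and to exploit stability of meagreness under countable unions together with the countability of $Q_i$; once a closed ball with nonempty interior has been obtained, Lemma \ref{seminorm contin} and condition (iii) finish the argument routinely.
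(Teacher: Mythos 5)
Your proof is correct and rests on the same ingredients as the paper's: the decomposition of each $[p_{i,q}<\infty]$ into the countable family of closed balls $[p_{i,q}\le n]$, Lemma \ref{seminorm contin} to upgrade a ball with nonempty interior to continuity of the seminorm, and condition (iii) to conclude closedness. The paper merely organizes this as a case split on the seminorm family (either every $i$ admits a $q_i$ whose unit ball has nonempty interior, giving closedness, or some $i$ has only nowhere-dense balls, giving meagreness), whereas you run the contrapositive and extract the good indices $q_i$ from the non-meagreness of $E$ itself; the content is the same.
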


\begin{proof}
We distinguish two cases. In the first case assume that 
\[
\forall\, i\in I \quad \exists\, q_i\in Q_i \qquad \interior\big([p_{i,q_i}\le 1]\big)\neq \emptyset.
\]
Then $V=[p_{i,q_i}<\infty]$ and $p_{i,q_i}$ is continuous for each $i\in I$ by Lemma \ref{seminorm contin}. Thus, $E$ is closed in $V$ by assertion (iii) from Definition \ref{definition P subspace}. \\
In the second case assume that
\[
\exists\, i\in I \quad \forall\, q\in Q_i \qquad \interior\big([p_{i,q}\le 1]\big)= \emptyset.
\]
Now, assertion (ii) from Definition \ref{definition P subspace} implies that 
\[
E\subset \bigcup_{q\in Q_i} [p_{i,q}<\infty]=\bigcup_{q\in Q_i} \bigcup_{k\in\N} [p_{i,q}\le k].
\]
From assertion (i), each ball $[p_{i,q}\le k]$ is nowhere dense and therefore $E$, being a subset of a meagre set, is meagre.
\end{proof}

We come now to a Baire-type theorem version specifically for $P$-subspaces.

\begin{prop}\label{prop baire}
Let $V$ be a topological vector space which is also a Baire space. If $(E_n)_{n\in \N}$ is a sequence of $P$-subspaces of $V$ such that 
\[
\bigcup_{n\in \N} E_n =V,
\]
then there is $n_0\in \N$ with $E_{n_0}=V$. 
\end{prop}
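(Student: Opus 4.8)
The plan is to combine Proposition~\ref{prop closed or meagre} with the Baire property of $V$. Since each $E_n$ is a $P$-subspace, Proposition~\ref{prop closed or meagre} tells us that each $E_n$ is either closed in $V$ or meagre in $V$. Partition $\N$ accordingly: let $N_{\mathrm{cl}}$ be the set of indices $n$ for which $E_n$ is closed, and $N_{\mathrm{mg}}$ the set of indices for which $E_n$ is meagre (if an $E_n$ is both, place it in $N_{\mathrm{cl}}$). Because $V$ is a Baire space and $V=\bigcup_{n\in\N}E_n=\bigl(\bigcup_{n\in N_{\mathrm{cl}}}E_n\bigr)\cup\bigl(\bigcup_{n\in N_{\mathrm{mg}}}E_n\bigr)$, the countable union $\bigcup_{n\in N_{\mathrm{mg}}}E_n$ is meagre, hence cannot be all of $V$; therefore $\bigcup_{n\in N_{\mathrm{cl}}}E_n$ is non-meagre, and in particular $N_{\mathrm{cl}}\neq\emptyset$.

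Next I would run a second Baire argument \emph{inside} the closed part. The set $C\coloneqq\bigcup_{n\in N_{\mathrm{cl}}}E_n$ is non-meagre in $V$; since it is a countable union of the closed sets $E_n$ ($n\in N_{\mathrm{cl}}$), at least one of them, say $E_{n_0}$ with $n_0\in N_{\mathrm{cl}}$, must be non-meagre — otherwise $C$ would be a countable union of meagre sets and hence meagre. A non-meagre closed set has non-empty interior: if $E_{n_0}$ had empty interior it would be nowhere dense (being closed and equal to its own closure), hence meagre, a contradiction. So $\interior(E_{n_0})\neq\emptyset$.

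Finally, because $E_{n_0}$ is a linear subspace of the topological vector space $V$ with non-empty interior, it is all of $V$: pick $v_0\in\interior(E_{n_0})$ and an open neighbourhood $U$ of $0$ with $v_0+U\subset E_{n_0}$; then for any $v\in V$, scaling $U$ (using that scalar multiplication is continuous, so $U$ is absorbing) gives $\lambda v\in U$ for some small $\lambda\neq 0$, whence $v_0+\lambda v\in E_{n_0}$, and subtracting $v_0\in E_{n_0}$ and dividing by $\lambda$ yields $v\in E_{n_0}$. Thus $E_{n_0}=V$, which is the claim.

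I expect the proof to be essentially a clean two-step application of the Baire category property, with no serious obstacle — the work has already been front-loaded into Proposition~\ref{prop closed or meagre}. The only point demanding a little care is the bookkeeping in the first paragraph: one must separate the closed from the meagre indices before invoking the Baire property, since a priori a single $E_n$ need only satisfy the \emph{disjunction} ``closed or meagre'', and then apply Baire a second time to the closed family to extract one index with non-empty interior. The standard fact that a subspace of a topological vector space with interior point is the whole space is routine and uses only that neighbourhoods of $0$ are absorbing.
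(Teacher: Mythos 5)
Your proof is correct and follows essentially the same route as the paper: decompose the indices into closed and meagre cases via Proposition~\ref{prop closed or meagre}, use the Baire property to extract a non-meagre (hence closed) $E_{n_0}$, and conclude $E_{n_0}=V$. The paper phrases the last step as ``a second-category subspace is dense'' while you argue via a non-empty interior and absorbing neighbourhoods, but these are the same observation.
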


\begin{proof}
Set 
\[
N_1\coloneqq\{n\in \N\st E_n \mbox{ is closed}\} \qquad \mbox{and} \qquad N_2\coloneqq\{n\in \N\st E_n \mbox{ is meagre}\}.
\]
By Proposition \ref{prop closed or meagre}, $\N=N_1\cup N_2$, hence 
\[
V=\bigcup_{n\in N_1} E_n \cup \bigcup_{n\in N_2} E_n.
\]
Since $V$ is a Baire space, it is not meagre in itself. As $V$ is a countable union, there is some $n_0\in \N$ such that $E_{n_0}$ is a subspace of second category in $V$. Necessarily, $n_0\in N_1$, which means that $E_{n_0}$ is even closed. Finally, $E_{n_0}=V$ as every subspace of second category is dense in $V$.
\end{proof}

From Proposition \ref{prop baire} we deduce the next result, which is the basis for most of the proofs in Section \ref{section applications}.

\begin{corolla}\label{corolla}
Let $u:\rpl\rightarrow \caL(X;Y)$ be a function and let $\mathtt{P}$ be a regularity property like in Definition \ref{definition of P}. Assume that, for each $n\in\N$, the set
\[
E_n\coloneqq \big\{x\in X\st u_x \mbox{ has } \mathtt{P} \mbox{ on } \R_{> n} \big\}
\]
is a $P$-subspace of $X$. Then, $u$ has individually eventually the property $\mathtt{P}$ if and only if it has $\mathtt{P}$ uniformly eventually.
\end{corolla}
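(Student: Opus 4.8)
The plan is to reduce the statement directly to Proposition \ref{prop baire}, since all the substantive work has already been done there. First, the implication ``$u$ has $\mathtt{P}$ uniformly eventually $\Rightarrow$ $u$ has $\mathtt{P}$ individually eventually'' is immediate from Definition \ref{definition of P}: a universal time $t$ that works for all $x$ in particular works for each individual $x$ (take $t_x\coloneqq t$). So the entire content of the corollary lies in the converse implication.

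For the converse, I would begin by invoking the translation already recorded in the text: by \eqref{union}, the assertion that $u$ has $\mathtt{P}$ individually eventually is equivalent to $\bigcup_{n\in\N} E_n = X$, and the sets $E_n$ appearing there are exactly the ones in the hypothesis of the corollary. Next I would observe that $X$, being a Banach space, is a complete metric space and hence a Baire space by Theorem \ref{meagre}; in particular it is a topological vector space that is also a Baire space, which is precisely the ambient assumption in Proposition \ref{prop baire}. Since by hypothesis each $E_n$ is a $P$-subspace of $X$ and their union is $X$, Proposition \ref{prop baire} applies and yields an index $n_0\in\N$ with $E_{n_0}=X$.

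Finally I would unwind the equality $E_{n_0}=X$: it says that for every $x\in X$ the orbit $u_x$ has $\mathtt{P}$ on $\R_{>n_0}=(n_0,\infty)$, which is by Definition \ref{definition of P}(b) exactly the statement that $u$ has $\mathtt{P}$ uniformly eventually, with universal time $t=n_0$. This completes the argument.

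As for the main obstacle: there is essentially no analytic difficulty, because the real work — the dichotomy ``closed or meagre'' for $P$-subspaces (Proposition \ref{prop closed or meagre}) and the Baire-category argument itself (Proposition \ref{prop baire}) — has already been carried out. The only points needing a little care are bookkeeping ones: matching the sets $E_n$ of the corollary with those of \eqref{subsets En}, and explicitly justifying that $X$ is a Baire space so that Proposition \ref{prop baire} is genuinely applicable. I would therefore keep the proof to just a few lines.
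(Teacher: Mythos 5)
Your proposal is correct and is exactly the argument the paper intends: the corollary is deduced by identifying individual eventual regularity with $\bigcup_{n\in\N}E_n=X$, noting that the Banach space $X$ is a Baire space, and applying Proposition \ref{prop baire} to obtain $E_{n_0}=X$ for some $n_0$. No gaps; the bookkeeping points you flag (the trivial converse direction and the Baire property of $X$) are the only things to check, and you check them.
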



\section{Applications}\label{section applications}

We provide now many different applications of Corollary \ref{corolla}. In particular, we show that the notions ``uniformly eventually'' and ``individually eventually'' coincide for several linear properties. Throughout this section we will freely use the following observation.

\begin{rem}\label{remark supremum continuous}
Assume that $X$ is a topological space and, for some index set $I$, let $(f_i)_{i\in I}$ be a family of continuous functions $f_i: X\rightarrow \R$. Then the pointwise supremum 
\[
f: X\rightarrow\R\cup\{+\infty\}, \qquad f(x)\coloneqq \sup_{i\in I} f_i(x)
\]
is lower semicontinuous. In particular, $[f\le t]$ is closed in $X$ for all $t\in\R\cup\{+\infty\}$.
\end{rem}

\subsection{Boundedness}
We start by considering boundedness and local boundedness.
\begin{prop}
A function $u:\rpl\rightarrow\caL(X;Y)$ is individually eventually bounded if and only if it is uniformly eventually bounded.
\end{prop}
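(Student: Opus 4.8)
The plan is to invoke Corollary \ref{corolla}, so it suffices to show that for each $n\in\N$ the set
\[
E_n=\big\{x\in X\st u_x \text{ is bounded on }\R_{>n}\big\}
\]
is a $P$-subspace of $X$. Saying that $u_x$ is bounded on $\R_{>n}$ means exactly that $\sup_{t>n}\norm{u(t)x}_Y<\infty$, so the natural choice is to take, in Definition \ref{definition P subspace}, both $I$ and the sets $Q_i$ to be singletons and to work with the single extended seminorm
\[
p_n:X\rightarrow\rplc,\qquad p_n(x)\coloneqq\sup_{t>n}\norm{u(t)x}_Y.
\]
Homogeneity and subadditivity of $p_n$ (with the convention $0\cdot\infty=0$) are inherited from $\norm{\cdot}_Y$, so $p_n$ is an extended seminorm, and by construction $E_n=[p_n<\infty]$; in particular $E_n$ is a linear subspace and condition (ii) of Definition \ref{definition P subspace} holds.

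For condition (i) I would use that each $u(t)\in\caL(X;Y)$ is bounded, hence $x\mapsto\norm{u(t)x}_Y$ is continuous on $X$; the pointwise supremum $p_n$ is then lower semicontinuous by Remark \ref{remark supremum continuous}, so $[p_n\le 1]$ is closed in $X$. For condition (iii), let $(x_\alpha)_\alpha$ be a net in $E_n$ with $x_\alpha\to x\in X$ and $p_n(x-x_\alpha)\sto{\alpha}0$; since this net of extended reals tends to $0$, there is an index $\alpha_0$ with $p_n(x-x_{\alpha_0})<\infty$, and the triangle inequality gives $p_n(x)\le p_n(x-x_{\alpha_0})+p_n(x_{\alpha_0})<\infty$ because $x_{\alpha_0}\in E_n=[p_n<\infty]$; hence $x\in E_n$. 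Thus $E_n$ is a $P$-subspace of $X$, and Corollary \ref{corolla} yields the asserted equivalence (the implication ``uniformly'' $\Rightarrow$ ``individually'' being trivial anyway).

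The argument presents no serious difficulty; if anything, the step carrying the real weight is condition (i), i.e.\ the closedness of $[p_n\le 1]$, since it is precisely what stands in for the closedness exploited in Example \ref{example}, and it rests on each $u(t)$ being a \emph{bounded} operator — without that, the orbit-evaluation maps $x\mapsto\norm{u(t)x}_Y$ need not be continuous and $p_n$ need not be lower semicontinuous. It is also worth stressing why the $P$-subspace machinery is genuinely needed here: $E_n=[p_n<\infty]=\bigcup_{k\in\N}[p_n\le k]$ is in general only an $F_\sigma$ in $X$ and, when proper, need not be closed, so one cannot run the plain Baire argument of Example \ref{example} but must instead appeal to the ``closed or meagre'' dichotomy of Proposition \ref{prop closed or meagre}.
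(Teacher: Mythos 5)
Your proof is correct and follows essentially the same route as the paper: the same single extended seminorm $p_n(x)=\sup_{t>n}\norm{u_x(t)}$, closedness of $[p_n\le 1]$ via Remark \ref{remark supremum continuous}, and the observation that $E_n=[p_n<\infty]$ makes conditions (ii) and (iii) of Definition \ref{definition P subspace} immediate. The paper simply records (ii) and (iii) as trivial where you spell out the net argument explicitly; the content is identical.
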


\begin{proof}
By Corollary \ref{corolla}, it suffices to prove that
\[
E_n\coloneqq \big\{x\in X \st u_x \mbox{ is bounded on } \R_{> n}\big\}
\]
is a $\mathtt{P}$-subspace of $X$ for each $n\in\N$. With each subspace $E_n$, we associate the extended seminorm
\[
p_n:X\rightarrow\rplc, \qquad p_n(x)\coloneqq\sup\big\{\norm{u_x(t)}\st t\in\R_{> n}\big\},
\]
cf.\ Notation \ref{nota without index}.
Due to Remark \ref{remark supremum continuous}, $[p_n\le 1]$ is closed in $X$, which shows that assertion (i) of Definition \ref{definition P subspace} is satisfied. Finally, assertions (ii) and (iii) are trivially satisfied because $E_n=[p_n<\infty]$, and this concludes the proof.
\end{proof}

\begin{prop}\label{ev bdd}
A function $u:\rpl\rightarrow\caL(X;Y)$ is individually eventually locally bounded if and only if it is uniformly eventually locally bounded.
\end{prop}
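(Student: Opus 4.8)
The strategy is to invoke Corollary \ref{corolla}: it suffices to exhibit, for each $n\in\N$, a structure of $P$-subspace on
\[
E_n\coloneqq\big\{x\in X\st u_x\text{ is locally bounded on }\R_{>n}\big\}.
\]
Recall that a $Y$-valued function is locally bounded on the open interval $(n,\infty)$ exactly when it is bounded on each compact subinterval, and that every compact subinterval of $(n,\infty)$ is contained in $[a,b]$ for a suitable pair of rationals $a,b$ with $n<a<b$. So I would attach to $E_n$ the countable index set $Q_n\coloneqq\big\{(a,b)\in\Q^2\st n<a<b\big\}$ together with the extended seminorms
\[
p_{n;(a,b)}:X\rightarrow\rplc,\qquad p_{n;(a,b)}(x)\coloneqq\sup\big\{\norm{u_x(t)}\st t\in[a,b]\big\},
\]
where $(a,b)$ ranges over $Q_n$; here the index set $I$ of Definition \ref{definition P subspace} is a singleton, so I suppress it (cf.\ Notation \ref{nota without index}). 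Homogeneity and subadditivity of $p_{n;(a,b)}$ are immediate, so these are genuine extended seminorms, and the remark on compact subintervals gives the identity $E_n=\bigcap_{(a,b)\in Q_n}[p_{n;(a,b)}<\infty]$.

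It then remains to verify the three conditions of Definition \ref{definition P subspace}. For condition (i): for fixed $t$ the map $x\mapsto\norm{u_x(t)}=\norm{u(t)x}$ is continuous on $X$, being the composition of the bounded operator $u(t)$ with the norm; hence $p_{n;(a,b)}$, a pointwise supremum of continuous functions, is lower semicontinuous by Remark \ref{remark supremum continuous}, and therefore $[p_{n;(a,b)}\le 1]$ is closed in $X$. Condition (ii) is the identity just displayed. For condition (iii), let $(x_\alpha)_\alpha$ be a net in $E_n$ with $x_\alpha\to x$ and $p_{n;(a,b)}(x-x_\alpha)\to 0$ for every $(a,b)\in Q_n$; fixing $(a,b)$, choose an index $\alpha$ with $p_{n;(a,b)}(x-x_\alpha)\le 1$, so that $p_{n;(a,b)}(x)\le 1+p_{n;(a,b)}(x_\alpha)<\infty$ by subadditivity and $x_\alpha\in E_n$. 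Thus $x\in\bigcap_{(a,b)\in Q_n}[p_{n;(a,b)}<\infty]=E_n$, so $E_n$ is a $P$-subspace and Corollary \ref{corolla} applies.

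I do not expect a genuine obstacle here: once local boundedness is rephrased as a countable family of finiteness conditions, the argument runs exactly as for the preceding proposition on (global) eventual boundedness, the only change being that a single seminorm is replaced by the countable family $(p_{n;(a,b)})_{(a,b)\in Q_n}$. The one point warranting a line of care is that the intervals with rational endpoints genuinely exhaust the compact subintervals of $(n,\infty)$, which is what makes $E_n$ equal to — and not merely contained in — the intersection $\bigcap_{(a,b)\in Q_n}[p_{n;(a,b)}<\infty]$ required in condition (ii).
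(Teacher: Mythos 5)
Your proof is correct and follows essentially the same route as the paper, which simply uses the concrete exhausting intervals $\big[n+\tfrac1m,\,n+m\big]$ indexed by $m\in\N$ in place of your rational-endpoint intervals $[a,b]$. One small indexing remark: since condition (ii) of Definition \ref{definition P subspace} intersects over $I$ and unions over the $Q_i$, your setup should be read as $I=Q_n$ with each $Q_i$ a singleton (so the suppressed index is $q_i$, cf.\ Notation \ref{nota without index}), not as $I$ being a singleton --- your displayed identity $E_n=\bigcap_{(a,b)\in Q_n}[p_{n;(a,b)}<\infty]$ and your verification of (iii) already implicitly use this correct reading.
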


\begin{proof}
The claim follows from Corollary \ref{corolla}. We need to show that the subspace
\[
E_n\coloneqq \big\{x\in X \st u \mbox{ is locally bounded on } \R_{> n} \big\}
\]
is a $P$-subspace for each $n\in\N$. For every $n,m\in\N$ consider the extended seminorm
\[
p_{n;m}:X\rightarrow\rplc, \qquad p_{n;m}(x)\coloneqq  \sup\bigg\{\big\lVert u_x(t)\big\rVert\st t\in \Big[n+\frac{1}{m},n+m\Big]\bigg\}
\]
and observe that $[p_{n;m}\le 1]$ is closed in $X$ by Remark \ref{remark supremum continuous}.
As for any $n\in\N$
\[
E_n=\bigcap_{m\in\N} [p_{n;m}<\infty],
\]
assertions (ii) and (iii) of Definition \ref{definition P subspace} are trivially satisfied. Thus, $E_n$ is a $P$-subspace.
\end{proof}

\subsection{Continuity}
Firstly, we turn our attention to Hölder continuity.
\begin{prop}
Let $0 <\alpha\le 1$. A function $u:\rpl\rightarrow\caL(X;Y)$ is individually eventually $\alpha$-Hölder continuous if and only if it is uniformly eventually $\alpha$-Hölder continuous.
\end{prop}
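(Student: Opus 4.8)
The plan is to apply Corollary \ref{corolla}, so the whole task reduces to exhibiting, for each $n\in\N$, a family of extended seminorms on $X$ making
\[
E_n\coloneqq \big\{x\in X\st u_x\text{ is $\alpha$-H\"older continuous on }\R_{>n}\big\}
\]
into a $P$-subspace of $X$. Recall that $u_x$ being $\alpha$-H\"older on an interval $I$ means $\sup_{s\neq t\in I}\norm{u_x(t)-u_x(s)}/\abs{t-s}^{\alpha}<\infty$. The natural candidate is therefore the H\"older-type extended seminorm, but as in the proof of Proposition \ref{ev bdd} I would cut $\R_{>n}$ into compact pieces to keep everything localised and to make sure the relevant balls are closed. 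Concretely, for $n,m\in\N$ put
\[
p_{n;m}:X\rightarrow\rplc,\qquad p_{n;m}(x)\coloneqq \sup\bigg\{\frac{\norm{u_x(t)-u_x(s)}}{\abs{t-s}^{\alpha}}\st s,t\in\Big[n+\tfrac1m,\,n+m\Big],\ s\neq t\bigg\}.
\]
Each $p_{n;m}$ is easily checked to be an extended seminorm (the triangle inequality and absolute homogeneity are inherited from $\norm{\cdot}$ on $Y$ taken pointwise and then under the supremum).

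The first point of Definition \ref{definition P subspace} is that $[p_{n;m}\le 1]$ is closed in $X$. This follows from Remark \ref{remark supremum continuous}: for each fixed pair $s\neq t$ the map $x\mapsto \norm{u_x(t)-u_x(s)}/\abs{t-s}^{\alpha} = \norm{u(t)x-u(s)x}/\abs{t-s}^{\alpha}$ is continuous on $X$ (it is a norm of a bounded linear operator applied to $x$, divided by a constant), so $p_{n;m}$, being the pointwise supremum of such continuous functions, is lower semicontinuous and its sublevel set $[p_{n;m}\le 1]$ is closed. For the second and third points I would observe that
\[
E_n=\bigcap_{m\in\N}[p_{n;m}<\infty].
\]
Indeed, $u_x$ is $\alpha$-H\"older on $\R_{>n}$ iff it is $\alpha$-H\"older on every compact subinterval $[n+\frac1m,n+m]$: the forward direction is trivial, and for the converse any two points $s,t>n$ lie together in some such $[n+\frac1m,n+m]$, so a single $m$-dependent bound controls that pair; one must also note that a function H\"older on each member of an increasing exhausting family of compact intervals is H\"older on the union with the \emph{same} constant only on each piece, which is exactly what the intersection over $m$ records (we do not need a uniform-in-$m$ constant, since membership in $E_n$ only asks for finiteness of the global H\"older seminorm, and finiteness on $\R_{>n}$ is equivalent to finiteness on each $[n+\frac1m,n+m]$ because any fixed pair of points beyond $n$ is captured at a finite stage). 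Hence condition (ii) holds with $Q_n$ the singleton-free index set $\{1,2,\dots\}$ (countable), and condition (iii) is automatic because membership in $E_n$ is characterised purely by the finiteness conditions $p_{n;m}(x)<\infty$, which are preserved under the hypothesis of (iii): if $p_{n;m}(x-x_\beta)\to 0$ for suitable indices then in particular $p_{n;m}(x-x_\beta)<\infty$ eventually, whence $p_{n;m}(x)\le p_{n;m}(x-x_\beta)+p_{n;m}(x_\beta)<\infty$ for every $m$, so $x\in E_n$.

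The only genuinely delicate point, and the one I would write out carefully, is the equivalence $E_n=\bigcap_m[p_{n;m}<\infty]$, i.e.\ that local ($\alpha$-H\"older on each compact subinterval) implies global ($\alpha$-H\"older on all of $\R_{>n}$) \emph{for the purpose of finiteness}. One has to be slightly careful because the H\"older seminorm over a non-compact interval could a priori blow up even when finite on every compact piece — but this cannot happen here since for $s,t\in\R_{>n}$ with, say, $s<t$, both lie in $[n+\frac1m,n+m]$ as soon as $m\ge\max\{t-n,\,1/(s-n)\}$, so $\norm{u_x(t)-u_x(s)}/\abs{t-s}^{\alpha}\le p_{n;m}(x)$; taking the supremum over all pairs, and using that each pair is controlled by some finite $m$, yields that the global H\"older seminorm of $u_x$ on $\R_{>n}$ is finite iff all $p_{n;m}(x)$ are finite. (Contrast this with uniform continuity, where passing from compact pieces to the whole ray is genuinely problematic and needs an extra argument; for $\alpha$-H\"older continuity the finiteness of the seminorm is all that matters and the pairwise bound above suffices.) Once this identification is in hand, all three conditions of Definition \ref{definition P subspace} are verified, $E_n$ is a $P$-subspace for every $n$, and Corollary \ref{corolla} gives the claim.
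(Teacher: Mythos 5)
There is a genuine gap, and it sits exactly at the point you flagged as ``the only genuinely delicate point''. Your claimed identity $E_n=\bigcap_{m\in\N}[p_{n;m}<\infty]$ is false: the right-hand side is the set of $x$ for which $u_x$ is \emph{locally} $\alpha$-H\"older on $\R_{>n}$ (finite H\"older constant on each compact piece $[n+\frac1m,n+m]$, possibly blowing up with $m$), whereas $E_n$ asks for a \emph{single} constant valid on all of $\R_{>n}$. Your argument that ``each pair $s<t$ is controlled by some finite $m$'' only gives the bound $\norm{u_x(t)-u_x(s)}/\abs{t-s}^{\alpha}\le p_{n;m}(x)$ with $m$ depending on the pair; since $\sup_m p_{n;m}(x)$ may be $+\infty$ even when every $p_{n;m}(x)$ is finite, no uniform bound over all pairs follows. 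Concretely, an orbit behaving like $t\mapsto t^2$ (for $\alpha=1$), or like $t\mapsto t$ (for $\alpha<1$), lies in $\bigcap_m[p_{n;m}<\infty]$ but not in $E_n$. This breaks your verification of condition (iii) of Definition \ref{definition P subspace}: from $p_{n;m}(x-x_j)\to 0$ for every $m$ you can only conclude $p_{n;m}(x)<\infty$ for every $m$, i.e.\ local H\"older continuity of $u_x$, not membership in $E_n$; and there is no way to repair this with the compact-piece seminorms, since the H\"older constants of the $x_j$ on $\R_{>n}$ need not be bounded. At best your argument proves that individual eventual $\alpha$-H\"older continuity implies \emph{uniform eventual local} $\alpha$-H\"older continuity, a strictly weaker statement.

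The fix is to drop the compact exhaustion entirely and use, for each $n$, the single global extended seminorm
\[
p_n(x)\coloneqq \sup_{n< r<s}\frac{\norm{u_x(s)-u_x(r)}}{(s-r)^{\alpha}},
\]
which is what the paper does. Then $[p_n\le 1]$ is closed by Remark \ref{remark supremum continuous} exactly as in your argument, and now $E_n=[p_n<\infty]$ holds \emph{exactly}, so conditions (ii) and (iii) are genuinely trivial (for (iii): $p_n(x)\le p_n(x-x_\alpha)+p_n(x_\alpha)<\infty$ once $p_n(x-x_\alpha)$ is finite). The compact-interval family $p_{n;m}$ you constructed is precisely the right tool for the \emph{local} $\alpha$-H\"older statement, which is a separate proposition in the paper; your instinct to ``localise to make the balls closed'' was unnecessary here, since the global H\"older quotient is already a pointwise supremum of continuous functions of $x$.
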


\begin{proof}
For each $n\in\N$ consider the subspace
\[
E_n=\{x\in X \st u_x \mbox{ is }  \alpha\mbox{-Hölder continuous on } \R_{> n}\}
\]
and the extended seminorm 
\[
p_n:X\rightarrow\rplc, \qquad p_n(x)\coloneqq \sup_{n\le r<s}\frac{\norm{u_x(s)-u_x(r)}}{(s-r)^\alpha}.
\]
By Remark \ref{remark supremum continuous}, the ball $[p_n\le 1]$ is closed in $X$, thus $p_n$ satisfies assertion (i) from Definition \ref{definition P subspace}. Moreover, since $E_n=[p_n<\infty]$, assertions (ii) and (iii) hold trivially and therefore $E_n$ is a $P$-subspace. Now the claim follows from Corollary \ref{corolla}.
\end{proof}

\begin{prop}
Let $0<\alpha\le 1$. A function $u:\rpl\rightarrow\caL(X;Y)$ is individually eventually locally $\alpha$-Hölder continuous if and only if it is uniformly eventually locally $\alpha$-Hölder continuous.
\end{prop}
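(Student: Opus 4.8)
The plan is to apply Corollary~\ref{corolla} one more time, so that everything reduces to checking that, for each $n\in\N$, the subspace
\[
E_n=\{x\in X\st u_x\text{ is locally }\alpha\text{-Hölder continuous on }\R_{>n}\}
\]
is a $P$-subspace of $X$. The idea is to merge the localisation device of Proposition~\ref{ev bdd} with the $\alpha$-Hölder seminorm used in the previous proof: one exhausts $\R_{>n}$ by the compact intervals $\big[n+\tfrac1m,\,n+m\big]$, $m\in\N$, and on each of them one takes the $\alpha$-Hölder seminorm. Concretely, for $n,m\in\N$ put
\[
p_{n;m}:X\rightarrow\rplc,\qquad p_{n;m}(x)\coloneqq\sup_{n+\frac1m\le r<s\le n+m}\frac{\norm{u_x(s)-u_x(r)}}{(s-r)^\alpha}
\]
(with the convention $\sup\emptyset=0$, which only affects the harmless index $m=1$). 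Each $p_{n;m}$ is an extended seminorm by the same two-line verification as before, and for fixed $r<s$ the map $x\mapsto\norm{u_x(s)-u_x(r)}/(s-r)^\alpha$ is continuous on $X$ because $u(s)-u(r)\in\caL(X;Y)$; hence Remark~\ref{remark supremum continuous} gives that $[p_{n;m}\le1]$ is closed in $X$, which is assertion~(i) of Definition~\ref{definition P subspace}.

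The one point that requires a word is the identity $E_n=\bigcap_{m\in\N}[p_{n;m}<\infty]$. The inclusion ``$\subseteq$'' is immediate, since the restriction of a locally $\alpha$-Hölder function to the compact set $\big[n+\tfrac1m,n+m\big]$ is $\alpha$-Hölder. For ``$\supseteq$'', note that every compact subinterval of $\R_{>n}$ is contained in some $\big[n+\tfrac1m,n+m\big]$ and that the restriction of an $\alpha$-Hölder function to a subinterval is again $\alpha$-Hölder; so if all $p_{n;m}(x)$ are finite, then $u_x$ is $\alpha$-Hölder on every compact subinterval of $\R_{>n}$, i.e.\ $x\in E_n$. (This last step is literally the definition of ``locally $\alpha$-Hölder on an interval'' if one reads that notion as ``$\alpha$-Hölder on every compact subinterval''; if instead one uses the pointwise reading ``every point has a neighbourhood on which the function is $\alpha$-Hölder'', the two are reconciled by the elementary gluing fact that a map which is $\alpha$-Hölder on $[a,b]$ with constant $C_1$ and on $[b,c]$ with constant $C_2$ is $\alpha$-Hölder on $[a,c]$ with constant $C_1+C_2$, applied along a finite subcover.)

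Granting the identity, assertions~(ii) and~(iii) of Definition~\ref{definition P subspace} are obtained verbatim as in the proof of Proposition~\ref{ev bdd}: (ii) holds with equality, and for~(iii), if $(x_\beta)_\beta$ is a net in $E_n$ with $x_\beta\to x$ and $p_{n;m}(x-x_\beta)\to0$ for every $m$, then $p_{n;m}(x)\le p_{n;m}(x_\beta)+p_{n;m}(x-x_\beta)<\infty$ for $\beta$ large, whence $x\in\bigcap_m[p_{n;m}<\infty]=E_n$. Thus each $E_n$ is a $P$-subspace and Corollary~\ref{corolla} yields the claim. I expect the only genuine (and still minor) obstacle to be the bookkeeping in the identity $E_n=\bigcap_m[p_{n;m}<\infty]$, that is, the passage between the ``local'' description on $\R_{>n}$ and the ``compact exhaustion'' description via the intervals $\big[n+\tfrac1m,n+m\big]$; everything else runs exactly parallel to Proposition~\ref{ev bdd} and to the preceding global $\alpha$-Hölder statement.
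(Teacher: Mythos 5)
Your proof is correct and follows essentially the same route as the paper: the same compact exhaustion $\big[n+\tfrac1m,n+m\big]$ of $\R_{>n}$, the same Hölder seminorms $p_{n;m}$, closedness of $[p_{n;m}\le 1]$ via Remark~\ref{remark supremum continuous}, and the identity $E_n=\bigcap_m[p_{n;m}<\infty]$ making assertions (ii) and (iii) of Definition~\ref{definition P subspace} automatic. The only cosmetic difference is that the paper starts the index at $m=2$ to avoid the empty supremum, where you instead adopt the convention $\sup\emptyset=0$; your extra remarks on reconciling the two readings of ``locally $\alpha$-Hölder'' are sound but not needed in the paper's version.
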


\begin{proof}
For each $n\in\N$ consider the subspaces
\[
E_n\coloneqq \big\{x\in X\st u_x \mbox{ is locally } \alpha\mbox{-Hölder continuous on } \R_{> n}\big\}.
\]
Once proven that each $E_n$ is a $P$-subspace, then the claim follows from Corollary \ref{corolla}. For every $n\in\N$ and $m\in \N\setminus\{1\}$ define the extended seminorm
\[
p_{n;m}:X\rightarrow\rplc, \qquad p_{n;m}(x)\coloneqq \sup\bigg\{\frac{\norm{u_x(s)-u_x(r)}}{(s-r)^\alpha} \st n+\frac{1}{m} \le r<s\le n+m\bigg\}
\]
and observe that $[p_{n;m}\le 1]$ is closed in $X$ by Remark \ref{remark supremum continuous}. Thus, $p_{n;m}$ satisfies assertion (i) from Definition \ref{definition P subspace}, while $E_n$ satisfies assertions (ii) and (iii) as
\[
E_n = \bigcap_{m=2}^\infty [p_{n;m}<\infty],
\]
which shows that $E_n$ is a $P$-subspace.
\end{proof}


\begin{prop}\label{cont}
A function $u:\rpl\rightarrow\caL(X;Y)$ is individually eventually continuous if and only if it is uniformly eventually continuous.
\end{prop}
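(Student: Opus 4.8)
The plan is to apply Corollary~\ref{corolla}: it suffices to show that for every $n\in\N$ the subspace $E_n\coloneqq\{x\in X\st u_x\text{ is continuous on }\R_{>n}\}$ is a $P$-subspace of $X$, and then the asserted equivalence follows at once. I will follow the same pattern as in the preceding proofs for (local) boundedness and (local) H\"older continuity, exhausting $\R_{>n}$ by the compact intervals $K_m\coloneqq[n+\tfrac1m,\,n+m]$ for $m\ge 2$ and attaching to each $K_m$ a single extended seminorm on $X$ (cf.\ Notation~\ref{nota without index}). Here I will use that $\R_{>n}=\bigcup_{m\ge2}\interior(K_m)$, so that $u_x$ is continuous on $\R_{>n}$ if and only if it is continuous on every $K_m$.

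The key step is the choice of the seminorm. Although continuity --- in contrast to boundedness or H\"older continuity --- is not the finiteness of any single ``modulus'', I claim the bare oscillation seminorm $p_{n;m}(x)\coloneqq\sup\{\norm{u_x(s)-u_x(t)}\st s,t\in K_m\}$ already does the job. First I would note that $p_{n;m}$ is an extended seminorm (immediate from the triangle inequality in $Y$), and that since $x\mapsto\norm{(u(s)-u(t))x}$ is continuous on $X$ for fixed $s,t$, Remark~\ref{remark supremum continuous} shows that $p_{n;m}$ is lower semicontinuous, hence $[p_{n;m}\le1]$ is closed --- this is assertion~(i) of Definition~\ref{definition P subspace}. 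Assertion~(ii) is then immediate: for $x\in E_n$ the image $u_x(K_m)$ is compact, hence bounded, so $p_{n;m}(x)<\infty$, whence $E_n\subset\bigcap_{m\ge2}[p_{n;m}<\infty]$.

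The substantive part is assertion~(iii). Given a net $(x_\alpha)_\alpha$ in $E_n$ with $x_\alpha\to x$ in $X$ and $p_{n;m}(x-x_\alpha)\to0$ for each $m\ge2$, I would fix $m$, a point $t_0\in K_m$ and $\varepsilon>0$; pick $\alpha_0$ with $p_{n;m}(x-x_{\alpha_0})<\varepsilon/2$; use continuity of the orbit $u_{x_{\alpha_0}}$ at $t_0$ to get $\delta>0$ bounding $\norm{u_{x_{\alpha_0}}(t)-u_{x_{\alpha_0}}(t_0)}$ by $\varepsilon/2$ for $t\in K_m$ with $|t-t_0|<\delta$; and then invoke the identity $u_x(t)-u_x(t_0)=\bigl(u_{x-x_{\alpha_0}}(t)-u_{x-x_{\alpha_0}}(t_0)\bigr)+\bigl(u_{x_{\alpha_0}}(t)-u_{x_{\alpha_0}}(t_0)\bigr)$, bounding the first bracket by $p_{n;m}(x-x_{\alpha_0})$ since $t,t_0\in K_m$. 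This yields continuity of $u_x$ at every point of every $K_m$, i.e.\ $x\in E_n$; hence $E_n$ is a $P$-subspace and Corollary~\ref{corolla} completes the proof.

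The only genuinely delicate point, I expect, is recognising that assertion~(iii) is the right surrogate for closedness here: $E_n$ is in general neither closed in $X$ nor equal to $\bigcap_{m}[p_{n;m}<\infty]$ --- a function can be bounded on each $K_m$ without being continuous there --- so the elementary Baire argument of Example~\ref{example} does not apply directly. What makes it work is precisely that the hypothesis $p_{n;m}(x-x_\alpha)\to0$ in~(iii) encodes the uniform-in-$t$ smallness of the ``error orbit'' $u_{x-x_\alpha}$ on $K_m$ that is exactly what is needed to transfer continuity from $u_{x_{\alpha_0}}$ to the limit $u_x$. Everything else is routine bookkeeping.
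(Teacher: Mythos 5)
Your proposal is correct and follows essentially the same route as the paper: the same oscillation seminorms $p_{n;m}(x)=\sup\{\norm{u_x(s)-u_x(t)}\st s,t\in[n+\tfrac1m,n+m]\}$, the same verification of assertions (i) and (ii), and for (iii) the same underlying idea (the paper phrases it as locally uniform convergence of $u_{x_j}$ to $u_x$ plus preservation of continuity under uniform limits, while you write out the equivalent three-term $\varepsilon$-argument directly). Your closing observation that $E_n\subsetneq\bigcap_m[p_{n;m}<\infty]$ in general, so that assertion (iii) is the genuinely load-bearing condition here, is exactly the point the paper is making with this class of examples.
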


\begin{proof}
We show that 
\[
E_n\coloneqq \big\{x\in X\st u_x \mbox{ is continuous on } \R_{> n}\big\}
\]
is a $P$-subspace of $X$ for each $n\in\N$, so that the claim follows from Corollary \ref{corolla}. \\
For each $n,m\in\N$ consider the extended seminorm
\[
p_{n;m} :X\rightarrow \rplc, \qquad p_{n;m}(x)\coloneqq \sup\bigg\{\norm{u_x(t)-u_x(s)}\st s,t\in \Big[n+\frac{1}{m},n+m\Big] \bigg\},
\]
and observe that by Remark \ref{remark supremum continuous}, $[p_{n;m}\le 1]$ is closed in $X$, hence $p_{n;m}$ satisfies assertion (i) of Definition \ref{definition P subspace}. Moreover,
\[
E_n\subset F_n\coloneqq \bigcap_{m\in\N} [p_{n;m}<\infty],
\]
which implies assertion (ii).
In order to check that $E_n$ is closed in $F_n$ with respect to the topology induced by the family $P_n\coloneqq (p_{n;m})_{m\in\N}$ on $F_n$, (cf. with Remark \ref{rem family of seminorms}), take a sequence $(x_j)_{j\in\N}$ in $E_n$ converging to some $x\in F_n$ with respect to the topology induced by $P_n$, that is, $p_{n;m}(x-x_j)\to 0$ as $j\to\infty$ for every $m\in\N$. This implies that $u_{x_j}$ converges to $u_x$ on $\R_{> n}$ locally uniformly. Hence, $u_x$ is continuous on $\R_{>n}$, and therefore $x\in E_n$. Thus, also assertion (iii) is satisfied and $E_n$ is a $P$-subspace.
\end{proof}

The assertion of the following proposition contains an unwanted but necessary wordplay.

\begin{prop}
A function $u:\rpl\rightarrow\caL(X;Y)$ is individually eventually uniformly continuous if and only if it is uniformly eventually uniformly continuous.
\end{prop}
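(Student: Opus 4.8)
The plan is to invoke Corollary~\ref{corolla}, so that it suffices to show that for every $n\in\N$ the subspace
\[
E_n\coloneqq\big\{x\in X\st u_x\mbox{ is uniformly continuous on }\R_{>n}\big\}
\]
is a $P$-subspace of $X$. In contrast with Proposition~\ref{cont}, uniform continuity is a genuinely global property on the unbounded ray $\R_{>n}$, so one cannot localise to compact subintervals; instead I would work with the global modulus of continuity. For $n,m\in\N$ put
\[
p_{n;m}(x)\coloneqq\sup\big\{\norm{u_x(s)-u_x(t)}\st s,t\in\R_{>n},\ \abs{s-t}\le\tfrac{1}{m}\big\},
\]
which is an extended seminorm on $X$, since each map $x\mapsto\norm{u_x(s)-u_x(t)}=\norm{(u(s)-u(t))x}$ is a seminorm on $X$; note that $m\mapsto p_{n;m}(x)$ is non-increasing. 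As in the earlier proofs, Remark~\ref{remark supremum continuous} shows that $[p_{n;m}\le 1]$ is closed in $X$, which gives assertion~(i) of Definition~\ref{definition P subspace} (here $I=\N$ and every $Q_i$ is a singleton, cf.\ Notation~\ref{nota without index}).

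The next ingredient is the elementary observation that $u_x$ is uniformly continuous on $\R_{>n}$ if and only if $p_{n;m}(x)\to0$ as $m\to\infty$, and that in that case a standard subdivision (``chaining'') argument, using that $\R_{>n}$ is an interval, shows that $\sup\{\norm{u_x(s)-u_x(t)}\st s,t\in\R_{>n},\ \abs{s-t}\le\delta\}$ is finite for \emph{every} $\delta>0$; in particular $p_{n;m}(x)<\infty$ for all $m$. Hence $E_n\subset F_n\coloneqq\bigcap_{m\in\N}[p_{n;m}<\infty]$, that is, assertion~(ii) holds.

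For assertion~(iii) I would argue as in Proposition~\ref{cont} via Remark~\ref{rem family of seminorms}: since $\N$ is countable, the topology on $F_n$ induced by $(p_{n;m})_{m\in\N}$ is metrizable, so it is enough to check that $E_n$ is sequentially closed in $F_n$ for this topology. So let $(x_j)_{j\in\N}$ be a sequence in $E_n$ and $x\in F_n$ with $p_{n;m}(x-x_j)\to0$ as $j\to\infty$ for every $m$; one must show $p_{n;m}(x)\to0$ as $m\to\infty$. Given $\varepsilon>0$, use $m=1$ to choose $j$ with $p_{n;1}(x-x_j)<\varepsilon/2$; by monotonicity $p_{n;l}(x-x_j)<\varepsilon/2$ for all $l\ge1$. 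Since $x_j\in E_n$, there is $L$ with $p_{n;l}(x_j)<\varepsilon/2$ for $l\ge L$, whence subadditivity gives $p_{n;l}(x)\le p_{n;l}(x-x_j)+p_{n;l}(x_j)<\varepsilon$ for all $l\ge L$. As $\varepsilon$ was arbitrary, $p_{n;l}(x)\to0$, that is $x\in E_n$. Thus each $E_n$ is a $P$-subspace and Corollary~\ref{corolla} finishes the proof.

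The only places where one has to think rather than compute are the finiteness statement behind assertion~(ii) — uniform continuity on the \emph{unbounded} ray must be leveraged, together with the connectedness of $\R_{>n}$, to force all the $p_{n;m}(x)$ to be finite — and the $\varepsilon/2$-splitting in assertion~(iii): there the rate at which $p_{n;l}(x_j)\to0$ as $l\to\infty$ genuinely depends on $j$, so one must first freeze a single good index $j$, exploiting that the single seminorm $p_{n;1}$ already dominates all the others, and only afterwards let $l\to\infty$.
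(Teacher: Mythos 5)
Your proof is correct and takes essentially the same route as the paper: the paper works with the single extended seminorm $p_n(x)=\sup\big\{\norm{u_x(t)-u_x(s)}\st s,t\in\R_{>n},\ \abs{t-s}\le 1\big\}$ and shows that the limiting modulus of continuity of $u_x$ is dominated by $p_n(x-x_j)$ for every fixed $j$, whereas you spread the same information over the countable family at scales $1/m$ and carry out the equivalent $\varepsilon/2$-splitting. The two ingredients you flag as requiring thought --- the chaining argument giving finiteness on the unbounded ray, and freezing a single good index $j$ before letting the scale tend to zero --- are exactly the (partly implicit) ingredients of the paper's argument as well.
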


\begin{proof}
For each $n\in\N$ define the subspace
\[
E_n\coloneqq \big\{x\in X\st u_x\mbox{ is uniformly continuous on } \R_{>n}\big\}.
\]
The claim follows from Corollary \ref{corolla} once shown that each $E_n$ is a $P$-subspace. \\
To this end, define for each $n\in\N$ the extended seminorm
\[
p_n:X\rightarrow\rplc, \qquad p_n(x)\coloneqq \sup\big\{\norm{u_x(t)-u_x(s)}\st s,t \in\R_{> n}, \,\,\, \abs{t-s}\le 1\big\}.
\]
Thanks to the uniform continuity, $E_n\subset[p_n<\infty]$ and by Remark \ref{remark supremum continuous} $[p_n\leq 1]$ is closed in $X$, which means that assertions (i) and (ii) from Definition \ref{definition P subspace} are satisfied.
Taking into account Remark \ref{rem family of seminorms}, it suffices to show that $E_n$ is closed with respect to the topology induced in $F_n\coloneqq [p_n<\infty]$ by the seminorm $p_n$. So, take now a sequence $(x_j)_{j\in\N}$ in $E_n$ and let $x\in F_n$ such that $p_n(x-x_j)\to 0$ as $j\to\infty$. For $j\in\N$ and $s,t > n$ with $\abs{t-s}\le 1$ one obtains
\begin{align*}
\nnorm{u_x(t)-u_x(s)} &\leq \nnorm{u_{x-x_j}(t)-u_{x-x_j}(s)}+\nnorm{u_{x_j}(t)-u_{x_j}(s)} \\
&\leq p_n(x-x_j)+\nnorm{u_{x_j}(t)-u_{x_j}(s)},
\end{align*}
which, combined with the uniform continuity of $u_{x_j}$ on $\R_{>n}$, yields that
\[
\lim_{\delta\to 0}\sup\big\{\norm{u_x(t)-u_x(s)}\st s,t\in\R_{>n} \mbox{ and } \abs{t-s}<\delta\big\} \le p_n(x-x_j).
\]
As $j$ is chosen arbitrarily, the term on the left-side is equal to $0$ and this shows that $x\in E_n$. Thus, $E_n$ is a $P$-subspace.
\end{proof}

Let $I\subset\R$ be an interval. We recall that the \textit{total variation} of a function $f:I\rightarrow X$ on a compact interval $K=[a,b]\subset I$ is the quantity
\[
V_K(f)\coloneqq V_a^b(f)\coloneqq\sup\bigg\{\sum_{j=1}^n \nnorm{f(t_j)-f(t_{j-1})} \st t_0=a<t_1<\ldots<t_n=b\bigg\}.
\]
The function $f$ is said to be \textit{locally of bounded variation} if $V_K(f)<\infty$ for every compact interval $K\subset I$. It is well known that
\begin{align}\label{equation total variation}
V_a^c(f)=V_a^b(f)+V_b^c(f) \qquad\qquad \mbox{ for all } a,b,c\in I \mbox{ with } a<b<c
\end{align}
and for every other function $g:I\rightarrow X$ and $\lambda\in\C$
\[
V_K(\lambda f)=\abs{\lambda}V_K(f), \qquad V_K(f+g)\le V_K(f)+V_K(g).
\]
From this second relation, it turns out that $V_K$ has the seminorm property for every compact interval $K\subset I$. We have then the following proposition.

\begin{prop}
A function $u:\rpl\rightarrow\caL(X;Y)$ is individually eventually locally of bounded variation if and only if it is uniformly eventually locally of bounded variation.
\end{prop}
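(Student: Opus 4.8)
The plan is to apply Corollary \ref{corolla}, so it suffices to show that for each $n\in\N$ the subspace
\[
E_n\coloneqq \big\{x\in X\st u_x\mbox{ is locally of bounded variation on } \R_{>n}\big\}
\]
is a $P$-subspace of $X$. Mimicking the proof of Proposition \ref{ev bdd}, I would exhaust $\R_{>n}$ by the compact intervals $[n+\frac1m,n+m]$ with $m\in\N$, and attach to $E_n$ the family of extended seminorms
\[
p_{n;m}:X\rightarrow\rplc, \qquad p_{n;m}(x)\coloneqq V_{[n+\frac1m,\,n+m]}(u_x).
\]
These are genuine extended seminorms because, as recalled just before the statement, $V_K$ has the seminorm property for every compact interval $K$.

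For assertion (i) of Definition \ref{definition P subspace}, I would note that for each fixed finite partition $t_0<t_1<\cdots<t_k$ of $[n+\frac1m,n+m]$ the map
\[
x\longmapsto \sum_{j=1}^k \nnorm{u_x(t_j)-u_x(t_{j-1})}
\]
is continuous on $X$, being a finite sum of compositions of the bounded operators $u(t_j)-u(t_{j-1})$ with the norm of $Y$. Since $p_{n;m}$ is the pointwise supremum of these functions over all such partitions, Remark \ref{remark supremum continuous} gives that $p_{n;m}$ is lower semicontinuous, so $[p_{n;m}\le 1]$ is closed in $X$.

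For assertions (ii) and (iii) I expect, exactly as in Proposition \ref{ev bdd}, the identity
\[
E_n=\bigcap_{m\in\N}[p_{n;m}<\infty].
\]
The inclusion ``$\subset$'' is immediate since each $[n+\frac1m,n+m]$ is a compact subset of $\R_{>n}$; for ``$\supset$'' one uses that an arbitrary compact $K\subset\R_{>n}$ is contained in some $[n+\frac1m,n+m]$, whence the additivity \eqref{equation total variation} of the total variation forces $V_K(u_x)\le p_{n;m}(x)<\infty$. Once this identity is in place, assertions (ii) and (iii) hold trivially --- each index set $Q_i$ being a singleton, assertion (iii) reduces via Remark \ref{rem family of seminorms} to the (automatic) closedness of $E_n=F_n$ in $F_n$ --- and Corollary \ref{corolla} concludes the argument.

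I do not anticipate a real obstacle; the only steps requiring a little attention are the continuity of the partition sums (needed for lower semicontinuity of $p_{n;m}$) and the appeal to \eqref{equation total variation} to transfer finiteness from the exhausting intervals $[n+\frac1m,n+m]$ to an arbitrary compact subinterval of $\R_{>n}$.
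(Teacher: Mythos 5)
Your proposal is correct and follows essentially the same route as the paper: the same exhausting intervals $\big[n+\frac1m,n+m\big]$, the same extended seminorms $p_{n;m}(x)=V_{K_{n;m}}(u_x)$, closedness of $[p_{n;m}\le 1]$ via Remark \ref{remark supremum continuous}, and the identity $E_n=\bigcap_{m\in\N}[p_{n;m}<\infty]$ making assertions (ii) and (iii) trivial. You merely spell out a few steps (continuity of the partition sums, the use of \eqref{equation total variation}) that the paper leaves implicit.
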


\begin{proof}
For $n\in\N$ define the subspaces 
\[
E_n\coloneqq \big\{x\in X\st u_x \mbox{ is locally of bounded variation on } \R_{>n}\big\}.
\]
If each vector space $E_n$ is a $P$-subspace of $X$, then the claim follows from Corollary \ref{corolla}. Now, for each $n,m\in\N$ define the compact interval $K_{n;m}\coloneqq \big[n+\frac{1}{m},n+m\big]$ and the extended seminorm
\[
p_{n;m}:X\rightarrow\rplc, \qquad p_{n;m}(x)\coloneqq V_{K_{n;m}}(u_x).
\]
By Remark \ref{remark supremum continuous}, assertion (i) from Definition \ref{definition P subspace} is satisfied, while each subspace $E_n$ satisfies assertions (ii) and (iii) since
\[
E_n=\bigcap_{m\in\N} [p_{n;m}<\infty].
\]
Thus, $E_n$ is a $P$-subspace for each $n\in\N$.
\end{proof}

Using a similar method, we obtain also the following result.

\begin{prop}
A function $u:\rpl\rightarrow\caL(X;Y)$ is individually eventually locally absolutely continuous if and only if it is uniformly eventually locally absolutely continuous.
\end{prop}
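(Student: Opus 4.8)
The plan is to apply Corollary~\ref{corolla}: it suffices to show that for each $n\in\N$ the subspace
\[
E_n\coloneqq \big\{x\in X\st u_x \mbox{ is locally absolutely continuous on } \R_{>n}\big\}
\]
is a $P$-subspace of $X$. I would reuse verbatim the set-up of the bounded-variation proposition, namely the compact intervals $K_{n;m}\coloneqq\big[n+\tfrac1m,n+m\big]$ and the extended seminorms $p_{n;m}\colon X\to\rplc$, $p_{n;m}(x)\coloneqq V_{K_{n;m}}(u_x)$ for $n,m\in\N$ (these are seminorms in $x$ by the homogeneity and subadditivity of the total variation recorded after \eqref{equation total variation}). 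Assertion~(i) of Definition~\ref{definition P subspace} follows exactly as before: $V_{K_{n;m}}(u_\cdot)$ is a supremum of the continuous maps $x\mapsto\sum_j\norm{u(t_j)x-u(t_{j-1})x}$ taken over finite partitions of $K_{n;m}$, hence lower semicontinuous, so $[p_{n;m}\le1]$ is closed by Remark~\ref{remark supremum continuous}. Assertion~(ii) holds in the form $E_n\subset F_n\coloneqq\bigcap_{m\in\N}[p_{n;m}<\infty]$, since an absolutely continuous function on a compact interval is of bounded variation there.

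The point where the argument genuinely departs from the bounded-variation case is assertion~(iii): now $E_n$ is a \emph{proper} subspace of $F_n$, so there is something to prove. Invoking Remark~\ref{rem family of seminorms} (the index set $\N$ is countable, so $F_n$ is metrizable), it is enough to check that $E_n$ is sequentially closed in $F_n$ for the topology generated by $(p_{n;m})_{m\in\N}$. So let $(x_j)_{j\in\N}\subset E_n$ and $x\in F_n$ with $p_{n;m}(x-x_j)=V_{K_{n;m}}(u_{x-x_j})\to0$ as $j\to\infty$ for every $m$ (here $u_{x-x_j}=u_x-u_{x_j}$ by linearity of each $u(t)$). Fixing $m$ and writing $K\coloneqq K_{n;m}$, I would show that $u_x$ is absolutely continuous on $K$ using that absolute continuity on a fixed compact interval passes to limits in the variation seminorm $V_K$: given $\varepsilon>0$, pick $j$ with $V_K(u_{x-x_j})<\varepsilon/2$, then pick $\delta>0$ witnessing the absolute continuity of $u_{x_j}$ on $K$ at level $\varepsilon/2$; for any finite family of pairwise disjoint subintervals $(a_l,b_l)_l$ of $K$ with $\sum_l(b_l-a_l)<\delta$ one gets $\sum_l\norm{u_x(b_l)-u_x(a_l)}\le\sum_l\norm{u_{x_j}(b_l)-u_{x_j}(a_l)}+\sum_l\norm{u_{x-x_j}(b_l)-u_{x-x_j}(a_l)}<\varepsilon/2+V_K(u_{x-x_j})<\varepsilon$, where the last sum is bounded by $V_K(u_{x-x_j})$ because the increments of any function over pairwise disjoint subintervals of $K$ add up to at most its total variation on $K$. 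Hence $u_x$ is absolutely continuous on every $K_{n;m}$, so $x\in E_n$, and $E_n$ is a $P$-subspace.

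The only real obstacle is the fact exploited in the last step — that the absolutely continuous functions form a \emph{closed} subspace of the bounded-variation functions for the total-variation seminorm — and this amounts exactly to the $\varepsilon/2$ argument above; everything else is a transcription of the preceding proof. If one prefers to work with nets instead of sequences, the same $\varepsilon/2$ estimate goes through unchanged, so Remark~\ref{rem family of seminorms} is a convenience rather than a necessity; alternatively one may note that $X$ is first-countable and appeal to Remark~\ref{rem condition (iv)}.
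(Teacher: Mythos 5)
Your proposal is correct and follows essentially the same route as the paper: the same intervals $K_{n;m}$, the same total-variation seminorms $p_{n;m}$, and the same $\varepsilon/2$-type limiting argument for assertion (iii) (the paper phrases it as $\lim_{\delta\to 0}\sup S_\delta \le p_{n;m}(x-x_j)$ for every $j$, which is your estimate in the contrapositive order). No gaps.
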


\begin{proof}
The claim follows from Corollary \ref{corolla} once proven that for each $n\in\N$
\[
E_n\coloneqq \big\{x\in X\st u_x \mbox{ is locally absolutely continuous on } \R_{>n}\big\}
\]
is a $P$-subspace of $X$. Define the compact interval $K_{n;m}\coloneqq \big[n+\frac{1}{m},n+m\big]$ for each $n,m\in\N$ together with the extended seminorm
\[
p_{n;m}:X\rightarrow\rplc, \qquad p_{n;m}(x)\coloneqq V_{K_{n;m}}(u_x).
\]
By Remark \ref{remark supremum continuous}, assertion (i) from Definition \ref{definition P subspace} is satisfied. Since every function which is absolutely continuous on a compact interval is of bounded variation on the same interval, we have 
\[
E_n\subset F_n\coloneqq \bigcap_{m\in\N} [p_{n;m}<\infty],
\]
which shows that $E_n$ satisfies assertion (ii).
Concerning assertion (iii), consider a sequence $(x_j)_{j\in\N}$ in $E_n$ and $x\in F_n$ such that $p_{n;m}(x-x_j)\to 0$ as $j\to\infty$ for all $m\in\N$ (cf. Remark \ref{rem family of seminorms}). Fix $m\in\N$ and let $(a_k)_{k=1}^l$ and $(b_k)_{k=1}^l$ be finite sequences in $K_{n;m}$ with $a_k<b_k$ for each $k\in\{1,\ldots,l\}$. By \eqref{equation total variation}, the inequality
\begin{align*}
\sum_{k=1}^l \nnorm{u_x(b_k)-u_x(a_k)}& \le \sum_{k=1}^l \nnorm{(u_{x-x_j})(b_k)-(u_{x-x_j})(a_k)}+\sum_{k=1}^l\nnorm{u_{x_j}(b_k)-u_{x_j}(a_k)}\\
&\le p_{n;m}(x-x_j)+\sum_{k=1}^l\nnorm{u_{x_j}(b_k)-u_{x_j}(a_k)}
\end{align*}
holds true for each $j\in\N$. Consider for each $\delta>0$ the quantity 
\[
D_\delta\coloneqq \sup \bigg\{\sum_{k=1}^l\nnorm{u_x(b_k)-u_x(a_k)}\st  \sum_{k=1}^l (b_k-a_k)<\delta \bigg\},
\]
where the supremum is taken over $l\in\N$ and all finite sequences $(a_k)_{k=1}^l$, $(b_k)_{k=1}^l$, in $K_{n;m}$ like above.
From the local absolute continuity of $u_{x_j}$ on $\R_{>n}$ it follows that
\[
\lim_{\delta\to 0} D_\delta \le p_{n;m}(x-x_j)
\]
Since $j\in\N$ is chosen arbitrarily, the term on the left-side is equal to $0$. This shows that $u_x$ is locally absolutely continuous on $\R_{>n}$ and, so, that $x\in E_n$. Thus, $E_n$ is a $P$-subspace.
\end{proof}

\begin{prop}
A function $u:\rpl\rightarrow\caL(X;Y)$ is individually eventually absolutely continuous if and only if it is uniformly eventually absolutely continuous.
\end{prop}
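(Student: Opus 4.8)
The plan is to apply Corollary \ref{corolla}, so everything reduces to showing that for each $n\in\N$ the set
\[
E_n\coloneqq \big\{x\in X\st u_x \mbox{ is absolutely continuous on } \R_{>n}\big\}
\]
is a $P$-subspace of $X$. Whereas the previous proposition controlled the absolute-continuity modulus on each compact subinterval separately, for the global statement I would measure it directly on the whole ray: for $n,m\in\N$ set
\[
p_{n;m}:X\rightarrow\rplc, \qquad p_{n;m}(x)\coloneqq \sup\bigg\{\sum_{k=1}^l\nnorm{u_x(b_k)-u_x(a_k)}\st \sum_{k=1}^l(b_k-a_k)<\frac1m\bigg\},
\]
where the supremum ranges over $l\in\N$ and all finite families of pairwise disjoint intervals with $n<a_k<b_k$. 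Positive homogeneity and subadditivity are inherited by such finite sums and then by the supremum, so each $p_{n;m}$ is an extended seminorm; moreover $m\mapsto p_{n;m}(x)$ is non-increasing, since a larger $m$ imposes a stricter length constraint, and $u_x$ is absolutely continuous on $\R_{>n}$ precisely when $p_{n;m}(x)\to 0$ as $m\to\infty$. I would therefore take the index set $I$ to be a singleton and let $Q=\N$ index the family $(p_{n;m})_{m\in\N}$.

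Clauses (i) and (ii) of Definition \ref{definition P subspace} are then immediate. For (i), each map $x\mapsto\sum_k\nnorm{u_x(b_k)-u_x(a_k)}=\sum_k\nnorm{(u(b_k)-u(a_k))x}$ is continuous on $X$, hence $p_{n;m}$ is lower semicontinuous by Remark \ref{remark supremum continuous} and $[p_{n;m}\le 1]$ is closed. For (ii), if $x\in E_n$ then $p_{n;m}(x)\to 0$, so $p_{n;m}(x)<\infty$ for some $m$, which already gives $E_n\subset\bigcup_{m\in\N}[p_{n;m}<\infty]$.

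The only clause requiring an argument is (iii), and this is where I expect the main obstacle: absolute continuity hides a ``for all large $m$'' quantifier, while clause (iii) supplies control of $p_{n;m_0}(x-x_j)$ at a \emph{single} index $m_0$ only; the monotonicity of the $p_{n;m}$ in $m$ is what bridges this gap. Concretely, let $(x_j)_{j\in\N}$ be a sequence in $E_n$ converging to $x\in X$ with $p_{n;m_0}(x-x_j)\to 0$ for some fixed $m_0$ (recall $u_x-u_{x_j}=u_{x-x_j}$ since each $u(t)$ is linear). For every $m\ge m_0$, subadditivity together with monotonicity gives
\[
p_{n;m}(x)\le p_{n;m}(x_j)+p_{n;m}(x-x_j)\le p_{n;m}(x_j)+p_{n;m_0}(x-x_j).
\]
Given $\varepsilon>0$, I would first choose $j$ with $p_{n;m_0}(x-x_j)<\varepsilon/2$, and then, using that $x_j\in E_n$ so $p_{n;m}(x_j)\to 0$ as $m\to\infty$, choose $m\ge m_0$ with $p_{n;m}(x_j)<\varepsilon/2$; this yields $p_{n;m}(x)<\varepsilon$ for all large $m$, hence $p_{n;m}(x)\to 0$ and $x\in E_n$. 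Since $X$ is first-countable, Remark \ref{rem condition (iv)} upgrades this sequential statement to the net formulation of (iii). Thus each $E_n$ is a $P$-subspace and Corollary \ref{corolla} delivers the claimed equivalence.
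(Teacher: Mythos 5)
Your proof is correct and follows essentially the same route as the paper: an application of Corollary \ref{corolla} to extended seminorms given by suprema of the absolute-continuity sums $\sum_k\norm{u_x(b_k)-u_x(a_k)}$ over disjoint intervals of small total length, with clause (iii) of Definition \ref{definition P subspace} verified by the same triangle-inequality estimate against the absolutely continuous approximants $u_{x_j}$. The only difference is organisational: the paper uses a single seminorm per $n$ (the supremum over families of total length $<1$, so $I$ and $Q$ are singletons), whereas you use the decreasing family at scales $1/m$ indexed by $Q=\N$ — which, incidentally, renders clause (ii) completely immediate, while the paper's single-seminorm version tacitly relies on the small splitting argument showing that absolute continuity already forces the scale-$1$ supremum to be finite.
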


\begin{proof}
The claim follows from Corollary \ref{corolla} once proven that for each $n\in\N$
\[
E_n\coloneqq \big\{x\in X\st u_x \mbox{ is absolutely continuous on } \R_{>n}\big\}
\]
is a $P$-subspace of $X$. For each $n\in\N$ and $\delta>0$ define
\[
S_n(x,\delta)\coloneqq \bigg\{\sum_{k=1}^l \norm{u_x(b_k)-u_x(a_k)}\st l\in\N,\,\, n<a_1<b_1<\ldots<a_l<b_l,\,\,\, \sum_{k=1}^l (b_k-a_k) < \delta\bigg\} 
\]
together with the extended seminorms
\[
p_n:X\rightarrow\rplc, \qquad p_n(x)\coloneqq \sup S_n(x,1).
\]
Fix $n\in\N$. Of course, $[p_n\le 1]$ is closed in $X$ by Remark \ref{remark supremum continuous} and $E_n\subset F_n\coloneqq [p_n<\infty]$. This shows that assertions (i) and (ii) from Definition \ref{definition P subspace} are satisfied. It remains to show that $E_n$ satisfies assertion (iii), that is, that $E_n$ is closed in $F_n$ with respect to the topology induced by $p_n$ on $F_n$. Taking into account Remark \ref{rem family of seminorms}, let $(x_j)_{j\in\N}$ be a sequence in $E_n$ and let $x\in F_n$ such that $p_n(x-x_j)\to 0$ as $j\to \infty$. For $l\in\N$ and disjoint intervals $[a_1,b_1],\ldots, [a_l,b_l]\subset\R_{>n}$ with $\sum_{k=1}^l (b_k-a_k)\le 1$ we have
\begin{align*}
\sum_{k=1}^l \nnorm{u_x(b_k)-u_x(a_k)}& \le \sum_{k=1}^l \nnorm{(u_x-u_{x_j})(b_k)-(u_x-u_{x_j})(a_k)}+\sum_{k=1}^l\nnorm{u_{x_j}(b_k)-u_{x_j}(a_k)}\\
&\le p_{n}(x-x_j)+\sum_{k=1}^l\nnorm{u_{x_j}(b_k)-u_{x_j}(a_k)}.
\end{align*}
At this point, using the absolute continuity of $u_{x_j}$ on $\R_{>n}$ and passing to the limit in the above expression, we obtain
\[
\lim_{\delta\to 0} \sup S_n(x,\delta)\le p_n(x-x_j)
\]
Since $j\in\N$ is chosen arbitrarily, $\lim_{\delta\to 0} \sup S_n(x,\delta)=0$, which means that $u_x$ is absolute continuous on $\R_{>n}$ and so $x\in E_n$. Thus, $E_n$ satisfies assertion (iii) from Definition \ref{definition P subspace} and is thus a $P$-subspace.
\end{proof}

\subsection{Differentiability}\label{section differentiability}

As mentioned in the introduction, Proposition \ref{differentiab} generalizes the results on eventual differentiability obtained by Iley and Bárta in \cite{iley} and \cite{barta}, respectively, by dispensing with the assumption of strong continuity for $u$.

\begin{prop}\label{differentiab}
A function $u:\rpl\rightarrow\caL(X;Y)$ is individually eventually differentiable if and only if it is uniformly eventually differentiable.
\end{prop}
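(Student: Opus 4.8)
The plan is to invoke Corollary \ref{corolla}, so the whole task reduces to checking that for every $n\in\N$ the set $E_n=\{x\in X\st u_x\text{ is differentiable on }\R_{>n}\}$, which by Remark \ref{remark definition} equals $\bigcap_{s>n}\{x\in X\st u_x\text{ is differentiable at }s\}$, is a $P$-subspace of $X$. The natural index set here is $I=\R_{>n}$ itself: it is uncountable, but Definition \ref{definition P subspace} permits this, since only the auxiliary sets $Q_s$ need be countable, and I would take $Q_s=\N$. For $s\in\R_{>n}$ and $m\in\N$ I would associate to $E_n$ the \emph{local difference-quotient seminorm}
\[
p_{n;s,m}:X\rightarrow\rplc,\qquad p_{n;s,m}(x)\coloneqq\sup\bigg\{\frac{\norm{u_x(t)-u_x(s)}}{\abs{t-s}}\st t\in\R_{>n},\ 0<\abs{t-s}\le\tfrac1m\bigg\}.
\]
Homogeneity of $u_x$ in $x$ and the triangle inequality in $Y$ make $p_{n;s,m}$ an extended seminorm; and since for fixed $s,t$ the map $x\mapsto\norm{u_x(t)-u_x(s)}/\abs{t-s}$ is continuous on $X$ (being built from the bounded operators $u(t),u(s)$), Remark \ref{remark supremum continuous} shows $[p_{n;s,m}\le 1]$ is closed, which is assertion (i) of Definition \ref{definition P subspace}.

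For assertion (ii) I would note that if $u_x$ is differentiable at $s$, then $\frac{u_x(t)-u_x(s)}{t-s}$ converges as $t\to s$, hence is bounded on a punctured neighbourhood of $s$, so $p_{n;s,m}(x)<\infty$ for all large $m$; this gives $E_n\subset\bigcap_{s>n}\bigcup_{m\in\N}[p_{n;s,m}<\infty]$. The substantive point is assertion (iii). Let $(x_\alpha)_\alpha$ be a net in $E_n$ with $x_\alpha\to x$ in $X$ such that for each $s>n$ there is $m_s\in\N$ with $\varepsilon_\alpha\coloneqq p_{n;s,m_s}(x-x_\alpha)\to 0$. Fix $s$. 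Writing $u_x=u_{x_\alpha}+u_{x-x_\alpha}$ and using that every difference quotient of $u_{x-x_\alpha}$ at $s$ with increment $\le\tfrac1{m_s}$ has norm $\le\varepsilon_\alpha$, one obtains for all $t,t'\in\R_{>n}$ with $0<\abs{t-s},\abs{t'-s}\le\tfrac1{m_s}$ the estimate
\[
\Norm{\frac{u_x(t)-u_x(s)}{t-s}-\frac{u_x(t')-u_x(s)}{t'-s}}\le\Norm{\frac{u_{x_\alpha}(t)-u_{x_\alpha}(s)}{t-s}-\frac{u_{x_\alpha}(t')-u_{x_\alpha}(s)}{t'-s}}+2\varepsilon_\alpha.
\]
Given $\delta>0$ I would first choose $\alpha$ with $2\varepsilon_\alpha<\delta/2$ and then, since $u_{x_\alpha}$ is differentiable at $s$, a punctured neighbourhood of $s$ on which the first term on the right is $<\delta/2$; this shows that $t\mapsto\frac{u_x(t)-u_x(s)}{t-s}$ satisfies the Cauchy criterion as $t\to s$, so by completeness of $Y$ it converges, i.e.\ $u_x$ is differentiable at $s$. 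As $s>n$ was arbitrary, $x\in E_n$, so (iii) holds, $E_n$ is a $P$-subspace, and Corollary \ref{corolla} finishes the proof.

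The step I expect to be the crux is the passage inside assertion (iii) from ``the oscillation of the difference quotients near $s$ is eventually arbitrarily small'' to ``the difference quotient converges at $s$'': this is exactly where completeness of $Y$ is needed, and it explains why one must work with the family of \emph{local} Lipschitz-type seminorms $p_{n;s,m}$ instead of a single global Lipschitz seminorm, which (as the remark after Definition \ref{definition P subspace} warns) would make $E_n$ open rather than yield any useful closedness. Everything else is the same bookkeeping as in the continuity and absolute-continuity propositions above; in particular this route realises the Baire-type proof of Bárta's theorem — and of its strong-continuity-free strengthening — paralleling Iley's argument rather than Bárta's gliding-hump one, as announced in the introduction.
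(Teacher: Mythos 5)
Your proposal is correct and follows essentially the same route as the paper: reduce to Corollary \ref{corolla} and exhibit each $E_n$ as a $P$-subspace via difference-quotient extended seminorms indexed by the points $t>n$, with closedness of the unit balls supplied by Remark \ref{remark supremum continuous}. The only immaterial differences are that the paper fixes a single radius $\delta_{n;t}$ per point (so each $Q_t$ is a singleton) rather than your countable family of shrinking radii $1/m$, and that it delegates your explicit Cauchy-criterion argument for assertion (iii) to the interchange-of-limits lemma (Lemma \ref{lemma interchange of limits}), after first passing from nets to sequences via Remark \ref{rem condition (iv)}.
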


In order to prove the proposition and also for the next applications it is convenient \linebreak to recall the following classical fact, which can be found, for example, in \linebreak \cite[Proposition~3.3.3]{tao}.

\begin{lemma}[Interchange of Limits]\label{lemma interchange of limits}
Let $(X,d_X)$ and $(Y,d_Y)$ be metric spaces with $Y$ complete, and let $S$ be a subset of $X$. Let $(f_j)_{j\in\N}$ be a sequence of functions from $S$ to $Y$ and suppose that it converges uniformly on $S$ to some function $f:S\rightarrow Y$. Let $s_0\in \overline{S}$ and suppose that for each $j\in\N$ the limit $\lim_{s\to s_0, \, s\in S} f_n(s)$ exists. Then the limit $\lim_{s\to s_0, \, s\in S} f(s)$ also exists and it is possible interchange the limits, that is,
\[
\lim_{n\to\infty} \,\, \lim_{\substack{s\in S \\ s\to s_0}} f_n(s)\,\,\,\,\,\, = \,\,\,\,\lim_{\substack{s\in S \\ s\to s_0}} f(s).
\]
\end{lemma}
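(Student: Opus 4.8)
The plan is to first manufacture the candidate limit value $L$ as the limit of the sequence of one-sided limits $L_j\coloneqq \lim_{s\to s_0,\, s\in S} f_j(s)$, whose existence is part of the hypothesis, and then to verify by a standard $\varepsilon/3$-argument that $f(s)$ converges to this same $L$ as $s\to s_0$ along $S$. Existence of $\lim_{s\to s_0} f(s)$ and the identity $\lim_{j\to\infty}L_j=\lim_{s\to s_0}f(s)$ will then both fall out at once.

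First I would show that $(L_j)_{j\in\N}$ is a Cauchy sequence in $Y$. Fix $\varepsilon>0$. Since $(f_j)_{j\in\N}$ converges uniformly on $S$, it is uniformly Cauchy, so there is $N\in\N$ with $d_Y(f_m(s),f_j(s))\le\varepsilon$ for all $m,j\ge N$ and all $s\in S$. Because $s_0\in\overline{S}$, every neighbourhood of $s_0$ meets $S$, so for fixed $m,j\ge N$ I may choose $s\in S$ arbitrarily close to $s_0$ and use the triangle inequality
\[
d_Y(L_m,L_j)\le d_Y(L_m,f_m(s))+d_Y(f_m(s),f_j(s))+d_Y(f_j(s),L_j);
\]
the outer two terms tend to $0$ as $s\to s_0$ (this is just the definitions of $L_m$ and $L_j$), while the middle term is $\le\varepsilon$, whence $d_Y(L_m,L_j)\le\varepsilon$ for all $m,j\ge N$. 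Thus $(L_j)$ is Cauchy, and since $Y$ is complete it converges to some $L\in Y$. This is the only place where completeness of $Y$ is used.

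It then remains to show $\lim_{s\to s_0,\, s\in S} f(s)=L$. Given $\varepsilon>0$, I would use uniform convergence to pick an index $j$ with $d_Y(f(s),f_j(s))<\varepsilon/3$ for all $s\in S$, enlarging $j$ if necessary so that also $d_Y(L_j,L)<\varepsilon/3$. With this $j$ now held fixed, the existence of the limit $L_j$ furnishes $\delta>0$ such that $d_Y(f_j(s),L_j)<\varepsilon/3$ whenever $s\in S$ and $d_X(s,s_0)<\delta$. Combining these through
\[
d_Y(f(s),L)\le d_Y(f(s),f_j(s))+d_Y(f_j(s),L_j)+d_Y(L_j,L)
\]
gives $d_Y(f(s),L)<\varepsilon$ for all such $s$, which is exactly the assertion. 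The hard part is conceptual rather than technical: one must resist evaluating the double limit in a single pass and instead decouple the two limiting processes, fixing the index $j$ before sending $s\to s_0$. The genuinely load-bearing hypotheses are uniformity of the convergence (to make the first term small uniformly in $s$) and completeness of $Y$ (to produce $L$ in the first place); everywhere the role of $s_0\in\overline{S}$ is to guarantee that the approach $s\to s_0$ through $S$ is non-vacuous.
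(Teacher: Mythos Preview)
Your proof is correct and is the standard argument for this classical fact. The paper does not actually prove this lemma; it merely cites \cite[Proposition~3.3.3]{tao}, so there is nothing to compare against beyond noting that your $\varepsilon/3$ argument via the Cauchy sequence $(L_j)$ is exactly the textbook route one finds in such references.
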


\begin{proof}[Proof of Proposition \ref{differentiab}]
It suffices to apply Corollary \ref{corolla}. For this, we need to show that, for each $n\in\N$, the subspace
\begin{align}\label{spaces differentiabiliy}
E_n\coloneqq \big\{x\in X\st u_x \mbox{ is differentiable on } \R_{>n} \big\}.
\end{align}
is a $P$-subspace of $X$. \\
For each $n\in\N$ and $t\in\R_{> n}$ choose $\delta_{n;t}\in\R_{>0}$ such that $[t-\delta_{n;t},t+\delta_{n;t}]\subset\R_{>n}$. Then, put $I_{\delta_{n;t}}\coloneqq [-\delta_{n;t},\delta_{n;t}]\setminus\{0\}\subset\rpl$ for the sake of simplicity and define the extended seminorm
\[
p_{n;t} :X\rightarrow \rplc,\qquad\qquad p_{n;t}(x)\coloneqq \sup_{h\in I_{\delta_{n;t}}}\,\,\,\,\,\normm{\frac{u_x(t+h)-u_x(t)}{h}}.
\]
By Remark \ref{remark supremum continuous}, $[p_{n;t}\le 1]$ is closed in $X$ and each extended seminorm $p_{n;t}$ satisfies assertion (i) from Definition \ref{definition P subspace}.
It follows that 
\[
E_n\subset F_n\coloneqq \bigcap_{t>n} [p_{n;t}<\infty].
\]
Thus, $E_n$ satisfies assertion (ii) with respect to the family of seminorms $P_n\coloneqq (p_{n;t})_{t>n}$. It remains to show that assertion (iii) holds true for $E_n$ and the family $P_n$. By Remark \ref{rem condition (iv)}, it suffices to consider assertion (iv) instead of (iii). Let $(x_j)_{j\in\N}$ be a sequence in $E_n$ and let $x\in F_n$ such that $p_{n;t}(x-x_j)\to 0$ as $j\to\infty$ for each $t\in\R_{>n}$.
Fix $t\in\R_{>n}$, and let $f_j : I_{\delta_{n;t}}\rightarrow Y$, $j\in\N$, and $f : I_{\delta_{n;t}}\rightarrow Y$ given by
\[
f_j(h)\coloneqq \frac{u_{x_j}(t+h)-u_{x_j}(t)}{h}\,\,\, (j\in\N) \quad \mbox{and} \quad  f(t)\coloneqq \frac{u_x(t+h)-u_x(t)}{h}.
\]
Since $p_{n;t}(x-x_j)\to 0$ as $j\to\infty$, the sequence $(f_j)_{j\in\N}$ converges uniformly to $f$ on $I_{\delta_{n;t}}$. Moreover, since $x_j\in E_n$ for each $j\in\N$, the limit $\lim_{h\to 0, \, h\in I_{\delta_{n;t}}} f_j(h)$ exists for each $j\in \N$. Now, Lemma \ref{lemma interchange of limits} applies with $S=I_{\delta_{n;t}}$ and it guarantees the existence of the limit $\lim_{h\to 0, \, h\in I_{\delta_{n;t}}} f(t)$, which implies the differentiability of $u_x$ at $t$. As $t$ is chosen arbitrarily, we deduce that $x\in E_n$ and assertion (iv) is therefore satisfied. Thus, $E_n$ is a $P$-subspace of $X$.
\end{proof}


\begin{lemma}\label{deriv lin op}
Let $k\in\N$ and let $u:\rpl\rightarrow\caL(X;Y)$ be a function. If $I\subset\R_{>0}$ is an open interval such that $u_x$ is continuously $(k-1)$-times differentiable on $I$ and $k$-times differentiable on some $t\in I$ for all $x\in X$, then
\begin{align}\label{equation strong derivative}
u^{(k)}(t):X\rightarrow Y,\qquad u^{(k)}(t)x\coloneqq u_x^{(k)}(t)
\end{align}
is an operator in $\caL(X;Y)$.
\end{lemma}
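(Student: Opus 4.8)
The plan is to verify linearity directly and then to obtain boundedness from the uniform boundedness principle, applied inductively along the orders of differentiation.

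\textbf{Linearity.} For every $s\in\rpl$ the assignment $x\mapsto u_x(s)=u(s)x$ is linear, since $u(s)\in\caL(X;Y)$. Hence, for scalars $\lambda,\mu$ and $x,y\in X$, the $Y$-valued functions satisfy $u_{\lambda x+\mu y}=\lambda u_x+\mu u_y$ on $\rpl$. As differentiation is a linear operation, it follows that $u_{\lambda x+\mu y}^{(j)}(s)=\lambda u_x^{(j)}(s)+\mu u_y^{(j)}(s)$ at every point $s$ where all three derivatives exist; specialising to $j=k$ and $s=t$ shows that $u^{(k)}(t)$ is linear.

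\textbf{Boundedness.} I would first prove, by induction on $j\in\{0,1,\dots,k-1\}$, the auxiliary claim: for every $s\in I$ the map $u^{(j)}(s)\colon x\mapsto u_x^{(j)}(s)$ belongs to $\caL(X;Y)$. The case $j=0$ is precisely the hypothesis that $u$ takes values in $\caL(X;Y)$. For the inductive step, assume $1\le j\le k-1$ and $u^{(j-1)}(s)\in\caL(X;Y)$ for all $s\in I$. Fix $s\in I$ and a sequence of nonzero reals $h_n\to 0$ with $s+h_n\in I$, and set
\[
T_n\coloneqq \frac{1}{h_n}\big(u^{(j-1)}(s+h_n)-u^{(j-1)}(s)\big)\in\caL(X;Y).
\]
Since $u_x$ is $(k-1)$-times differentiable on $I$ and $j\le k-1$, the limit $\lim_{n\to\infty}T_n x=u_x^{(j)}(s)=u^{(j)}(s)x$ exists for every $x\in X$; in particular $\{T_n\}_{n\in\N}$ is pointwise bounded, so the uniform boundedness principle gives $M\coloneqq\sup_n\norm{T_n}<\infty$, whence $\norm{u^{(j)}(s)x}=\lim_n\norm{T_n x}\le M\norm{x}$ for all $x$. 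This proves $u^{(j)}(s)\in\caL(X;Y)$ and completes the induction.

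Finally, I would run the same argument once more with $j=k$ and the fixed point $s=t$: choosing $h_n\to 0$ with $t+h_n\in I$ and putting $T_n\coloneqq h_n^{-1}\big(u^{(k-1)}(t+h_n)-u^{(k-1)}(t)\big)$, each $T_n$ lies in $\caL(X;Y)$ because $u^{(k-1)}(\cdot)$ is $\caL(X;Y)$-valued on $I$ by the induction just established, and $T_n x\to u_x^{(k)}(t)=u^{(k)}(t)x$ for every $x$ by the hypothesis that $u_x$ is $k$-times differentiable at $t$; the uniform boundedness principle then yields $u^{(k)}(t)\in\caL(X;Y)$, as claimed. I do not expect a genuine obstacle here: the only analytic input is the classical corollary of the Banach--Steinhaus theorem that a pointwise limit of bounded operators on a Banach space is bounded, and the remaining care is purely bookkeeping, namely making sure that every difference quotient appearing above is again a bounded operator — which is exactly what the induction on the order of differentiation guarantees, the continuous $(k-1)$-times differentiability of the orbits ensuring that $u^{(j)}(s)$ is well defined for all $s\in I$ when $j\le k-1$, while differentiability at the single point $t$ suffices for the last step.
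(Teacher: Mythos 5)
Your proof is correct and follows essentially the same route as the paper: induction on the order of differentiation, with the difference quotients $T_n$ recognised as bounded operators via the inductive hypothesis and the limit operator controlled by the uniform boundedness principle. The only cosmetic difference is that you extract a null sequence $h_n\to 0$ and use pointwise boundedness along it, whereas the paper bounds the difference quotients uniformly over a punctured interval $I_\delta$; both yield the same conclusion.
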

\begin{proof}
We prove the claim by induction on $k$.
Since the proof of the claim in the base case when $k=1$ is very the much same as that of the step case, we prove directly the induction step. \\
Assume that $u_x$ is $(k-1)$-times differentiable on $I$ and $k$-times differentiable on a fixed $t\in I$. Take $\delta>0$ such that $\{t\}+I_\delta\subset I$, where $I_\delta\coloneqq [-\delta,\delta]\setminus\{0\}$ (cf.\ also Notation \ref{notation Minkowski sum}).
By the induction hypothesis, $u^{(k-1)}(t+h)\in\caL(X;Y)$ for all $h\in I_\delta$, where $u^{(k-1)}(t+h)$ is defined similarly as in \eqref{equation strong derivative}. Now, for each $h\in I_\delta$ consider the operator
\[
S_h\coloneqq \frac{u^{(k-1)}(t+h)-u^{(k-1)}(t)}{h}\in\caL(X;Y).
\]
Since $u_x$ is $k$-differentiable at $t$ for all $x\in X$, it follows that 
\[
\sup_{h\in I_\delta} \Norm{\frac{u_x^{(k-1)}(t+h)-u_x^{(k-1)}(t)}{h}}<\infty\qquad\mbox{for all } x\in X,
\]
thus also $C_\delta\coloneqq \sup_{h\in I_\delta} \norm{S_h}<\infty$ by the uniform boundedness principle. Observe now that, for every null sequence $(h_j)_{j\in\N}$ in $I_\delta$, the sequence $(S_{h_j})_{j\in\N}$ converges strongly to the operator $u^{(k)}(t)$. Hence, $u^{(k)}(t)$ is in $\caL(X;Y)$ with $\norm{u^{(k)}(t)}\leq C_\delta$. 
\end{proof}

\begin{rem}\label{remark strong vs norm derivative}
All derivatives of the function $u$ considered in Lemma \ref{deriv lin op} exist with respect to the strong operator topology. As intuition suggests, if $u$ is $k$-times differentiable at $t$ for some $k\in\N_0$ and $t\in\rpl$ with respect to the strong operator topology, this does not mean in general that $T$ is $k$-times differentiable at $t$ with respect to the operator norm topology. Actually, the question whether ``strong differentiability'' and ``operator norm differentiability'' coincide or not in the case when $u$ is a $C_0$-semigroup turned out to be non-trivial. It has been answered negatively by Bárta: in \cite{barta}, he constructs an eventually differentiable $C_0$-semigroup $T$ on a closed subspace of the Banach space of the bounded uniformly continuous functions on $\rpl$ having the property that $T$ is differentiable with respect to the strong operator topology on the interval $\R_{>2\pi}$, but not differentiable with respect to the operator norm topology at any point of the interval $(2\pi,4\pi)$. In particular, he shows that the differentiability interval in assertion (iv) from Proposition \ref{prop pazy} is optimal.
\end{rem}

Combining Proposition \ref{differentiab} and Lemma \ref{deriv lin op} we easily obtain the following corollary.

\begin{corolla}\label{corolla different}
A function $u:\rpl\rightarrow\caL(X;Y)$ is individually eventually $k$-times differentiable if and only if it is uniformly eventually $k$-times differentiable. 
\end{corolla}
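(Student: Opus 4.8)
The plan is to argue by induction on $k\in\N$. The base case $k=1$ is exactly Proposition \ref{differentiab}, and since uniform eventual $k$-times differentiability trivially implies the individual version, only the forward implication needs an argument in the inductive step. The idea for the step is to \emph{peel off one derivative}: once the induction hypothesis tells us that $u$ is uniformly eventually $(k-1)$-times differentiable, Lemma \ref{deriv lin op} lets us regard the $(k-1)$-th derivative $t\mapsto u^{(k-1)}(t)$ as a genuine $\caL(X;Y)$-valued function, to which Proposition \ref{differentiab} can then be applied.

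In more detail, assume $k\ge 2$ and that $u$ is individually eventually $k$-times differentiable. A function which is $k$-times differentiable on an interval is in particular $(k-1)$-times differentiable there, so $u$ is individually eventually $(k-1)$-times differentiable; by the induction hypothesis there is $n_0\in\N$ with $u_x$ being $(k-1)$-times differentiable on $\R_{>n_0}$ for every $x\in X$. Then, for each $x$, the orbit $u_x^{(k-2)}$ is differentiable — hence continuous — on the open interval $\R_{>n_0}$, so $u_x$ is continuously $(k-2)$-times differentiable there; Lemma \ref{deriv lin op}, applied with $k-1$ in place of $k$, therefore yields $u^{(k-1)}(t)\in\caL(X;Y)$ for every $t\in\R_{>n_0}$, where $u^{(k-1)}(t)x\coloneqq u_x^{(k-1)}(t)$. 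Define $v:\rpl\rightarrow\caL(X;Y)$ by $v(t)\coloneqq u^{(k-1)}(t)$ for $t>n_0$ and $v(t)\coloneqq 0$ for $0\le t\le n_0$; its orbits satisfy $v_x(t)=u_x^{(k-1)}(t)$ for all $t>n_0$ and $x\in X$.

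It then remains to transfer the conclusion back. For each $x\in X$ choose $t_x\ge n_0$ such that $u_x$ is $k$-times differentiable on $(t_x,\infty)$; then $v_x=u_x^{(k-1)}$ is differentiable on $(t_x,\infty)$, so $v$ is individually eventually differentiable. Proposition \ref{differentiab} then provides some $m_0\ge n_0$ such that $v_x$ is differentiable on $(m_0,\infty)$ for all $x\in X$, i.e.\ $u_x$ is $k$-times differentiable on $(m_0,\infty)$ for all $x\in X$, which is precisely uniform eventual $k$-times differentiability of $u$.

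I expect the only delicate point to be the verification of the hypotheses of Lemma \ref{deriv lin op}: one has to notice that uniform eventual $(k-1)$-times differentiability of $u$ on the \emph{open} interval $\R_{>n_0}$ automatically supplies the continuous $(k-2)$-times differentiability of each orbit that the lemma requires — which reduces, via ``differentiable implies continuous'', to the preceding step of the induction (for $k=2$ it is just continuity of a differentiable orbit). Everything else is bookkeeping of the thresholds $t_x$, $n_0$ and $m_0$, all of which the ``eventually'' quantifiers absorb harmlessly; in particular, replacing $u^{(k-1)}$ by the function $v$ extended to all of $\rpl$ is immaterial, since individual and uniform eventual regularity depend only on the behaviour of a function near $+\infty$.
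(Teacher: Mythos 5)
Your proof is correct and follows exactly the route the paper intends: the paper gives no detailed argument, stating only that the corollary is obtained by ``combining Proposition \ref{differentiab} and Lemma \ref{deriv lin op}'', which is precisely your induction --- use Lemma \ref{deriv lin op} to realise $u^{(k-1)}$ as an $\caL(X;Y)$-valued function and then apply Proposition \ref{differentiab} to it. Your verification of the hypotheses of Lemma \ref{deriv lin op} (continuous $(k-2)$-times differentiability coming for free from $(k-1)$-times differentiability on an open interval) and the bookkeeping of thresholds are both sound.
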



If a function $u:\rpl\rightarrow\caL(X;Y)$ is individually eventually $k$-differentiable for all $k\in\N_0$, then, by Corollary \ref{corolla different}, there is an increasing sequence $(t_k)_{k\in\N}$ of times such that $u$ is also strongly $k$-differentiable on the interval $(t_k,\infty)$ for each $k\in\N_0$. In general, we cannot expect to be able to control the growth of the times $t_k$ i.e., the sequence $(t_k)_{k\in\N}$ cannot be chosen to be bounded, as the following simple example shows.

\begin{es}\label{example t_k unbounded}
Let $\uC_0(\rpl)$ be the space of continuous functions vanishing at infinity and let $a:\rpl \rightarrow\C$ be defined by $a(s)\coloneqq -\log(1+s)+\ui s$. The multiplication semigroup on $\uC_0(\rpl)$ associated to the function $a$, defined by
\[
T:\rpl\rightarrow\caL\big(\uC_0(\rpl)\big), \quad T(t)f(s)\coloneqq f(s)\cdot\ue^{ta(s)} \qquad (f\in \uC_0(\rpl)),
\]
is strongly continuous with generator $A$ given by
\[
\dom(A)=\big\{f\in \uC_0(\rpl)\st a\cdot f\in \uC_0(\rpl) \big\}, \quad Af=a\cdot f.
\]
Fix $n\in\N$. Since 
\begin{align}\label{example asymptotic}
a(s)^n e^{ta(s)}=O\big(s^{n-t}\big) \qquad \mbox{as } s\to\infty \qquad (t\in\rpl),
\end{align}
the operator $A^nT(t)$ is bounded if and only if $t\in\R_{\ge n}$. \\
By Proposition \ref{prop pazy}, $T$ is (uniformly) eventually differentiable for $t_0 = 1$. As $T$ is a $C_0$-semigroup, if the (strong) derivative $T^{(n)}$ exists on an interval of the form $\R_{\ge t_n}$ for some $t_n\in\rpl$, then $T^{(n)}(t)=A^nT(t)\in\caL\big(\uC_0(\rpl)\big)$ for all $t\in\R_{\ge t_n}$. Thus, by \eqref{example asymptotic} the $n$-times derivative $T^{(n)}(t)$ exists if and only if $t\in\R_{\ge n}$.
\end{es}
Example \ref{example t_k unbounded} shows that uniform eventual $k$-differentiability for every $k\in\N_0$ does not imply uniform eventual smoothness in general. Nevertheless, one can wonder if an individually eventually smooth operator-valued function $u$ is also uniformly eventually smooth. It turns out that also to this question the answer is positive.

%
%
%
%
%
%
%
%
%
%

\begin{prop}\label{propsmooth}
A function $u:\rpl\rightarrow\caL(X;Y)$ is individually eventually smooth if and only if it is uniformly eventually smooth.
\end{prop}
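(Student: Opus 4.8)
The strategy is the same one used throughout this section: by Corollary \ref{corolla} it suffices to show that, for each $n\in\N$, the subspace
\[
E_n\coloneqq \big\{x\in X\st u_x \mbox{ is smooth on } \R_{>n}\big\}
\]
is a $P$-subspace of $X$. The natural family of extended seminorms to attach to $E_n$ should encode, simultaneously for every order $k\in\N_0$ and every interior point $t\in\R_{>n}$, a bound on the difference quotient of the $(k-1)$-st strong derivative at $t$; concretely, for $n\in\N$, $k\in\N_0$ and $t\in\R_{>n}$ I would fix $\delta_{n;t}>0$ with $[t-\delta_{n;t},t+\delta_{n;t}]\subset\R_{>n}$, put $I_{\delta_{n;t}}\coloneqq [-\delta_{n;t},\delta_{n;t}]\setminus\{0\}$, and define
\[
p_{n;k,t}(x)\coloneqq \sup_{h\in I_{\delta_{n;t}}}\,\normm{\frac{u_x^{(k-1)}(t+h)-u_x^{(k-1)}(t)}{h}},
\]
with the convention $u_x^{(-1)}\coloneqq 0$ so that $p_{n;0,t}(x)=\norm{u_x(t)}$, interpreting $p_{n;k,t}(x)=+\infty$ whenever $u_x$ fails to be $(k-1)$-times differentiable on $[t-\delta_{n;t},t+\delta_{n;t}]$. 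The index set is $I=\R_{>n}$ and for each $t$ the countable set is $Q_t=\N_0$ (or one may simply index by the countable set $(\N_0\times(\R_{>n}\cap\Q))$ after a routine reduction, but using all $t$ is cleaner here).

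The verification of the three axioms of Definition \ref{definition P subspace} then proceeds as follows. For (i): on the set where $u_x^{(k-1)}(t\pm\cdot)$ makes sense, each map $x\mapsto \norm{(u_x^{(k-1)}(t+h)-u_x^{(k-1)}(t))/h}$ is, by Lemma \ref{deriv lin op} applied on a slightly smaller interval, continuous (it is $\norm{S_h x}$ for a fixed bounded operator $S_h$), so the supremum over $h\in I_{\delta_{n;t}}$ is lower semicontinuous by Remark \ref{remark supremum continuous} and $[p_{n;k,t}\le 1]$ is closed in $X$; one must phrase this so that the implicit "differentiability required" also contributes correctly to closedness, which it does since $[p_{n;k,t}\le1]$ only contains $x$ for which the relevant derivatives exist, and on the complement the supremum is $+\infty$. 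For (ii): if $x\in E_n$ then $u_x$ is smooth on $\R_{>n}$, hence all the derivatives exist and all the difference-quotient suprema over the compact-parameter sets $I_{\delta_{n;t}}$ are finite, giving $E_n\subset\bigcap_{t>n}\bigcap_{k\in\N_0}[p_{n;k,t}<\infty]$. For (iii), via Remark \ref{rem condition (iv)} (as $X$ is a Banach space, hence first-countable) it suffices to treat sequences: let $(x_j)\subset E_n$, $x\in F_n$ with $p_{n;k,t}(x-x_j)\to0$ for every $k\in\N_0$ and every $t\in\R_{>n}$; one then shows by induction on $k$ that $u_x$ is $k$-times differentiable on $\R_{>n}$ with $u_x^{(k)}$ continuous, exactly as in the proof of Proposition \ref{differentiab}: at level $k$, the functions $h\mapsto (u_{x_j}^{(k-1)}(t+h)-u_{x_j}^{(k-1)}(t))/h$ converge uniformly on $I_{\delta_{n;t}}$ to the corresponding difference quotient for $x$ (this is precisely $p_{n;k,t}(x-x_j)\to0$), each has a limit as $h\to0$ since $x_j\in E_n$, so Lemma \ref{lemma interchange of limits} gives differentiability of $u_x^{(k-1)}$ at $t$, and letting $t$ vary yields $u_x^{(k)}$ on all of $\R_{>n}$; continuity of $u_x^{(k)}$ comes from the induction hypothesis together with the same interchange argument (or simply: $u_x$ is $(k+1)$-times differentiable everywhere on $\R_{>n}$ once the induction is pushed one step further). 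Hence $x\in E_n$.

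The one genuine subtlety — and the step I expect to be the main obstacle — is the bookkeeping in the induction of step (iii): to invoke Lemma \ref{deriv lin op} and Lemma \ref{lemma interchange of limits} at order $k$ one needs $u_x$ to be \emph{continuously} $(k-1)$-times differentiable on a neighbourhood, not merely $(k-1)$-times differentiable at isolated points, so the induction hypothesis must be stated as ``$u_x\in C^{k-1}(\R_{>n};Y)$'' and one has to make sure the convergence $p_{n;k-1,s}(x-x_j)\to 0$ for all $s$ near $t$ really does deliver local uniform convergence of $u_{x_j}^{(k-1)}$ to $u_x^{(k-1)}$, so that the limit derivative is continuous. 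This is handled by noting that $p_{n;k-1,s}(x-x_j)\ge \norm{u_{x-x_j}^{(k-1)}(s')-u_{x-x_j}^{(k-1)}(s)}/\abs{s'-s}$ for $s'$ near $s$, which together with $p_{n;k-2,s}(x-x_j)\to0$ pins down $u_{x_j}^{(k-1)}(s)\to u_x^{(k-1)}(s)$ locally uniformly; alternatively, and more cheaply, one observes that once differentiability of $u_x$ of \emph{every} order on $\R_{>n}$ has been established from the family $(p_{n;k,t})$, smoothness is immediate and continuity of each derivative is automatic (a function differentiable on an open interval of order $k+1$ is in particular $C^k$ there). I would adopt this last shortcut to keep the argument short. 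Everything else is routine and parallels the differentiability case verbatim.
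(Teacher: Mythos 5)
Your overall architecture (Corollary \ref{corolla}, a doubly-indexed family of difference-quotient seminorms, induction on the order plus Lemma \ref{lemma interchange of limits}) matches the paper's second proof in spirit, but your choice of seminorms has a genuine gap at axiom (i) of Definition \ref{definition P subspace}, and it is exactly the obstruction the paper's construction is designed to avoid. You define
\[
p_{n;k,t}(x)\coloneqq \sup_{h\in I_{\delta_{n;t}}}\normm{\frac{u_x^{(k-1)}(t+h)-u_x^{(k-1)}(t)}{h}},
\]
set to $+\infty$ when $u_x$ is not $(k-1)$-times differentiable. For $[p_{n;k,t}\le 1]$ to be closed in $X$ you invoke Remark \ref{remark supremum continuous}, but that remark requires a family of functions that are continuous \emph{on all of} $X$. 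Your inner functions $x\mapsto\norm{(u_x^{(k-1)}(t+h)-u_x^{(k-1)}(t))/h}$ are only defined on the subspace $D_{k-1}$ of vectors with $(k-1)$-times differentiable orbit, and extending them by $+\infty$ destroys continuity unless $D_{k-1}$ is all of $X$ or closed — which is precisely what cannot be assumed at this stage. Concretely: a norm-convergent sequence $x_j\to x$ with $p_{n;k,t}(x_j)\le 1$ gives only pointwise convergence $u_{x_j}(s)\to u_x(s)$ and carries no information forcing $u_x$ to be $(k-1)$-times differentiable, so the limit can escape the sublevel set. Your parenthetical fix (``on the complement the supremum is $+\infty$'') does not address this, and your appeal to Lemma \ref{deriv lin op} to get a bounded operator $S_h$ is also unavailable, since that lemma needs differentiability of \emph{every} orbit, which is the conclusion, not a hypothesis.

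The paper circumvents this by introducing (Notation \ref{notation}, Lemma \ref{das lemma}) a $k$-th order difference quotient $D_{\delta,k,t}[u_x](h)=\frac{1}{h_1\cdots h_k}\sum_{J\subset[k]}(-1)^{k-\abs{J}}u(t+h_J)x$ that depends only on \emph{values} of $u_x$, never on lower-order derivatives; each $x\mapsto\norm{D_{\delta,k,t}[u_x](h)}$ is then genuinely continuous on $X$ (a finite combination of bounded operators), so the unit balls are closed, and identity \eqref{relation derivative and seminorms} is what lets the induction in step (iii) still control the difference quotient of $u_x^{(k)}$ by these seminorms. The paper states this motivation explicitly before Lemma \ref{das lemma}. (The paper also gives an alternative first proof via Friedrich's mollifiers, where closedness comes from dominated convergence.) Your step (iii) induction and the final shortcut are fine in outline, but they cannot be run until axiom (i) is repaired, which requires replacing your recursive seminorms by ones built from $u$ alone.
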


We provide two different proofs of Proposition \ref{propsmooth}. The first one makes use of Friedrich's mollifiers and is more direct than the second one. However, even if the second proof is more technical, this last quantitative version explains how to control all derivatives suitably in each point of $\rpl$. \\

We recall that a sequence of Friedrich's mollifiers $(\varrho_l)_{l\in\N}$ defined on $\R$ consists of functions belonging to $\uC^\infty(\R;\R)$ such that, for each $l\in\N$
\begin{align*}
\varrho_l(x)\ge 0 \mbox{ for all } x\in\R, \qquad \supp(\varrho_l)\subset \Big[-\frac{1}{l},\frac{1}{l}\Big], \qquad \int_{\big[-\frac{1}{l},\frac{1}{l}\big]} \varrho_l(x) \de x=1.
\end{align*}
Friedrich's mollifiers have many interesting regularisation properties. In particular, if $f:\R\rightarrow X$ is a function in $\uL_{loc}^1(\R;X)$, then the convolution 
\[
f\ast \rho_l:\R\rightarrow X, \qquad f\ast \rho_l(t) \coloneqq  \int_{\R} f(s)\varrho_l(t-s) \de s
\]
belongs to $\uC^\infty(\R; X)$ and $(f\ast \rho_l)^{(k)}=f\ast \rho_l^{(k)}=f^{(k)}\ast \rho_l$ for each $k,l\in\N$, where the last inequality should be interpreted in the sense of the distributions. Moreover, if $S\subset \R$ is a set on which $f$ is bounded and uniformly continuous, then the sequence $(f\ast\varrho_l)_{l\in\N}$ converges as $l\to\infty$ to the function $f$ uniformly on $S$. Therefore, we obtain the following characterization.

\begin{lemma}\label{lemma characterization of smoothness}
Let $k_0\in\N$. A function $f\in\uC(\R;X)$ belongs to $\uC^{k_0}(\R;X)$ if and only if there exist functions $g_1,\ldots,g_{k_0}\in \uC(\R;X)$ such that, for each $1\le k\le k_0$, the sequence $(f\ast \varrho_l^{(k)})_{l\in\N}$ converges to $g_k$ uniformly on compact subsets. In this case, $g_k=f^{(k)}$ for every $1\le k\le k_0$.
\end{lemma}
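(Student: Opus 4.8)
The plan is to establish the two implications separately: the ``only if'' direction is an immediate consequence of the regularisation properties recalled just before the lemma, while for the ``if'' direction I would run a short induction on the order of differentiation, passing each time from the $(k-1)$-st to the $k$-th derivative.

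For the ``only if'' part, assume $f\in\uC^{k_0}(\R;X)$. For every $l\in\N$ and every $1\le k\le k_0$ one has the identity $f\ast\varrho_l^{(k)}=f^{(k)}\ast\varrho_l$; here $f^{(k)}$ is a genuine continuous function, so the distributional identity quoted above reduces to the elementary one obtained by moving the derivative from the convolution onto either factor. Since each $f^{(k)}$ is continuous, the recalled mollification property gives $f^{(k)}\ast\varrho_l\to f^{(k)}$ uniformly on every compact subset of $\R$. Hence the choice $g_k\coloneqq f^{(k)}$, $1\le k\le k_0$, does the job and already pins down the limits.

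For the ``if'' part, I would set $g_0\coloneqq f$ and prove by induction on $k\in\{0,1,\dots,k_0\}$ the statement: $f\in\uC^{k}(\R;X)$ and $f^{(j)}=g_j$ for all $0\le j\le k$. The case $k=0$ is just the continuity of $f$. For the step from $k-1$ to $k$, put $\phi_l\coloneqq f\ast\varrho_l^{(k-1)}=(f\ast\varrho_l)^{(k-1)}$ for $l\in\N$; the second equality — shifting the differentiation onto the mollifier — is elementary and, crucially, does not presuppose any smoothness of $f$. Then $\phi_l\to g_{k-1}$ uniformly on compact sets (by the regularisation property if $k=1$, by the hypothesis of the lemma if $k\ge 2$), while $\phi_l'=f\ast\varrho_l^{(k)}\to g_k$ uniformly on compact sets. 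For fixed $a,t\in\R$, the fundamental theorem of calculus applied to the genuinely smooth function $\phi_l$ gives $\phi_l(t)-\phi_l(a)=\int_a^t\phi_l'(s)\de s$, the integral being taken in $X$; letting $l\to\infty$ and interchanging the limit with the integral — legitimate since $\phi_l'\to g_k$ uniformly on $[a,t]$ and $X$ is complete — yields $g_{k-1}(t)-g_{k-1}(a)=\int_a^t g_k(s)\de s$. Because $g_k$ is continuous, the right-hand side is continuously differentiable in $t$ with derivative $g_k(t)$; thus $g_{k-1}$ is continuously differentiable with $g_{k-1}'=g_k$. Combined with $g_{k-1}=f^{(k-1)}$ (inductive hypothesis) this gives $f^{(k)}=g_k\in\uC(\R;X)$, i.e.\ $f\in\uC^{k}(\R;X)$, closing the induction. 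Taking $k=k_0$ finishes the proof and also shows $g_k=f^{(k)}$ for all $1\le k\le k_0$.

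There is no conceptual obstacle here, but the one point that must be handled with care is that in the backward direction we may not assume from the outset that $f$ is differentiable, so the classical identity $f\ast\varrho_l^{(k)}=f^{(k)}\ast\varrho_l$ is not available; only the trivial identity $(f\ast\varrho_l)^{(k)}=f\ast\varrho_l^{(k)}$ is used, and the differentiability of the limit must be reconstructed from the integral representation of $\phi_l$ together with the interchange of limit and integral, which is the one spot where completeness of $X$ genuinely enters. If one prefers to avoid integrals, the same passage to the limit can instead be organised through Lemma \ref{lemma interchange of limits} applied to the difference quotients $h\mapsto h^{-1}\big(\phi_l(t+h)-\phi_l(t)\big)$, but the integral form above seems to me the most economical route.
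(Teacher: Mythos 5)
Your proof is correct. The forward direction coincides with the paper's: both rest on the identity $f\ast\varrho_l^{(k)}=f^{(k)}\ast\varrho_l$ (legitimate once $f\in\uC^{k_0}$) and on the uniform convergence of mollifications of a continuous function on compact sets. In the backward direction the paper is terser: after the same observation $f\ast\varrho_l^{(k)}=(f\ast\varrho_l)^{(k)}$, it notes that the hypothesis says precisely that $\big((f\ast\varrho_l)_{\vert K}\big)_{l\in\N}$ is a Cauchy sequence in the Banach space $\big(\uC^k(K;X),\norm{\cdot}_k\big)$ for every compact $K$, and concludes by completeness and the arbitrariness of $K$. You instead unfold that completeness argument by hand: your induction via the fundamental theorem of calculus, $\phi_l(t)-\phi_l(a)=\int_a^t\phi_l'(s)\de s$, followed by the interchange of the limit with the Bochner integral, is exactly the standard proof that $\uC^k(K;X)$ is complete. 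So the two routes rest on the same fact; yours is self-contained and makes explicit the one place where completeness of $X$ enters, while the paper's is shorter by invoking the Banach space structure of $\uC^k(K;X)$ as a known ingredient. Your cautionary remark that in the backward direction only the ``trivial'' identity $(f\ast\varrho_l)^{(k)}=f\ast\varrho_l^{(k)}$ may be used, since $f^{(k)}$ is not yet known to exist, matches the care the paper takes at the same spot.
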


\begin{proof}
If $f\in \uC^{k_0}(\R;X)$, then $f^{(k)}\in\uC(\R;X)$ for each $1\le k\le k_0$. In particular, $f^{(k)}$ is uniformly continuous on each compact subset of $\R$ and this implies that $(f\ast \varrho_l^{(k)})_{l\in\N}$ converges uniformly to $f^{(k)}$ on each compact set by the properties of Friedrich's mollifiers.
Conversely, let $K\subset\R$ be compact and assume that continuous functions $g_1,\ldots,g_k$ exist and satisfy the assertion of the lemma. After observing that $f\ast \rho_l^{(k)}=(f\ast \rho_l)^{(k)}$, this simply means that the sequence $\big((f\ast \rho_l)_{\vert K}\big)_{l\in\N}$ is a Cauchy sequence in the Banach space $\big(\uC^k(K;X), \norm{\cdot}_k\big)$, where $\norm{\phi}_k\coloneqq \norm{\phi}_\infty+\norm{\phi'}_\infty+\ldots+\norm{\phi^{(k)}}_\infty$ for $\phi\in \uC^k(K;X)$.
At this point, the claim follows immediately by the arbitrariness of the choice of the compact set $K$.
\end{proof}

\begin{proof}[First proof of Proposition \ref{propsmooth}]
For each $n\in\N$ consider the subspaces
\[
E_n\coloneqq \big\{x\in X \st u_x \mbox{ is smooth on } \R_{>n} \big\}.
\]
In order to apply Corollary \ref{corolla}, we have to prove that each $E_n$ is a $P$-subspace. Take a sequence $(\varrho_l)_{l\in\N}$ of Friedrich's mollifiers and, for each $n,m\in\N$ and $k\in\N_0$, define the compact interval $K_{n; m}\coloneqq \big[n+\frac{1}{m},n+m\big]$ and
\[
p_{n;(m,k)}: X\rightarrow \rplc, \qquad p_{n;(m,k)}(x)\coloneqq \sup\Big\{\nnorm{u_x\ast \varrho_l^{(k)}(t)}\st t\in K_{n;m}, \,\, l\in\N_{\ge 2} \Big\},
\]
where we still write $u$ also for its extension by zero outside $\rpl$ to $\R$. Then, each $p_{n;(m,k)}$ is an extended seminorm. For each $n\in\N$ define the family $P_n\coloneqq (p_{n;(m,k)})_{(m,k)\in\N\times\N}$. \\
As $u$ is individually eventually smooth, by Proposition \ref{cont} there is a starting time $t_0\in\rpl$ such that $u$ is strongly continuous on $\R_{>t_0}$. By passing to the shifted function $\rpl\rightarrow\caL(X;Y)$, $t\mapsto u(t_0+t)$ if necessary, we can assume without loss of generality $t_0=0$. Hence, $u$ turns out to be locally bounded on $\R_{>0}$ by the uniform bounded principle. Therefore, since for $x\in X$, $t\in\R_{\ge 1}$, $l\in\N_{\ge 2}$ and $k\in\N_0$
\[
u_x\ast \varrho_l^{(k)}(t)= \int_{\R} u_x(s)\varrho_l^{(k)}(t-s) \de s=\int_{\big[-\frac{1}{l},\frac{1}{l}\big]} u_x(t-s)\varrho_l^{(k)}(s) \de s,
\]
the function 
\[
X\rightarrow Y, \qquad x\mapsto u_x\ast \varrho_l^{(k)}(t)
\]
is continuous by the dominated convergence theorem. Thus, $[p_{n;(m,k)}\le 1]$ is closed in $X$ and $p_{n;(m,k)}$ satisfies assertion (i) from Definition \ref{definition P subspace} by Remark \ref{remark supremum continuous}. \\ 
With regard to assertion (ii), fix $n\in\N$ and let $x\in E_n$. As $u_x$ is smooth on $\R_{>n}$, by Lemma \ref{lemma characterization of smoothness} the sequence $\big(u_x\ast \varrho_l^{(k)}\big)_{l\in\N}$ converges uniformly on every compact subset of $\R_{>n}$ to $u_x^{(k)}$ for each $k\in\N$. 
From this, $p_{n; (m,k)}(x)<\infty$ for all $m,k\in\N$ and therefore
\[
E_n\subset \bigcap_{(m,k)\in\N\times\N} [p_{n;(m,k)}<\infty].
\]
In order to prove that assertion (iv) from Remark \ref{rem condition (iv)} holds true for $E_n$, take a sequence $(x_j)_{j\in\N}$ such that $p_{n;(m;k)}(x-x_j)\to 0$ as $j\to \infty$ for all $m,k\in\N$. This means that, for each fixed $m,k\in\N$, the sequence $(u_{x_j}\ast \varrho_l^{(k)})_{j\in\N}$ converges uniformly to $u_x\ast\varrho_l^{(k)}$ on $K_{n;m}$ for all $l\in\N$ and even uniformly in $l$.
Besides, as each $x_j\in E_n$, the sequence $(u_{x_j}\ast \varrho_l^{(k)})_{l\in\N}$ converges uniformly to $u_{x_j}^{(k)}$ on the same interval. Thus, by a simple $\epsilon$/3 argument, it follows that $\big(u_{x_j}^{k}\big)_{j\in\N}$ is a Cauchy sequence in $\big(\uC(K_{n;m}), \norm{\cdot}_\infty\big)$. At this point, by the uniform convergence of the limits with respect to the parameter $l$, the limit
\[
\lim_{l\to\infty} u_x\ast \varrho_l^{(k)}= \lim_{l\to\infty}\lim_{j\to\infty} u_{x_j}\ast \varrho_l^{(k)} =\lim_{j\to\infty} \lim_{l\to\infty} u_{x_j}\ast \varrho_l^{(k)} =\lim_{j\to\infty} u_{x_j}^{k}
\]
exists in $\uC(K_{n;m})$. By the arbitrariness of the choice of $k$ and $m$ in $\N$ and by Lemma \ref{lemma characterization of smoothness}, it follows that $x\in E_n$. Hence, $E_n$ is a $P$-subspace of $X$ and so Corollary \ref{corolla} applies and yields the claim.
\end{proof}

For what is coming, it is convenient to introduce the following notation.

\begin{nota}\label{notation}
For $k\in\N$ we write $[k]$ for the set $\{1,\ldots,k\}$. Let $h=(h_1,\ldots,h_k)\in \R^k$. For $J\subset[k]$ we set
\[
h_J\coloneqq 
\left\{
\begin{array}{ll}
0 & \mbox{if } J=\emptyset \\
\sum_{j\in J} h_j & \mbox{if } J\neq\emptyset.
\end{array}
\right.
\]
Let $k\in\N$, let $I\subset\R$ be an open interval and $f:I\rightarrow X$ a function. Fix $t\in I$ and take $\delta\in\R_{>0}$ small enough that $[t-k\delta,t+k\delta]\subset I$. For $I_\delta\coloneqq [-\delta,\delta]\setminus\{0\}$ we define the function 
\[
D_{\delta,k, t}[f]: I_\delta^k\rightarrow X,\quad h\mapsto D_{\delta,k, t}[f](h)\coloneqq \frac{1}{h_1\cdot\ldots\cdot h_k} \sum_{J\subset [k]} (-1)^{k-|J|}\, f(t+h_J).
\]
\end{nota}

The second proof of Proposition \ref{propsmooth} is a natural generalization of the methods used for the proof of Proposition \ref{differentiab}. In this last one, the difference quotient plays the role of an extended seminorm, which allows treating the topic of eventual differentiability. As far as we know, in literature the higher-order derivatives of a function are always defined ``recursively'', in the sense that the derivative of order $k\in\N$ at a given point is obtained as limit of the difference quotient for the $(k-1)$-derivative. In our case, this formulation is not useful: indeed, we need to express the derivative of order $k$ only in dependence of the function itself and not of its derivatives of lower order since we cannot know, in general, if these derivatives exist. The expression $D_{\delta,k, t}[f]$ introduced in Notation \ref{notation} represents a sort of difference quotient of order $k$ for the function $f$. Through it, the existence of the derivative of order $k$ can be characterized and a suitable extended seminorm for controlling its behaviour can be defined. Lemma \ref{das lemma} is preparatory and makes these ideas precise.


\begin{lemma}\label{das lemma}
Suppose that $f$ is $(k-1)$-times differentiable on $I$. Then $f$ is $k$-times differentiable at $t$ if and only if the limit
\[
\lim_{h_k\to 0}\ldots\lim_{h_2\to 0}\lim_{h_1\to 0} D_{\delta,k,t}[f](h) 
\]
exists in $X$. In this case, it coincides with its derivative $f^{(k)}(t)$. Moreover, if $f$ is $k$-times continuously differentiable on $I$, then we have for all $h\in I_\delta^k$ the integral representation
\[
D_{\delta,k, t}[f](h)=\int_0^1\cdots\int_0^1 f^{(k)}\bigg(t+\sum_{j=1}^k h_js_j\bigg)\de s_1\cdots\de s_k.
\]
\end{lemma}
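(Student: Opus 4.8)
The plan is to prove the two parts essentially by induction on $k$, exploiting the combinatorial structure of the operator $D_{\delta,k,t}[f]$. The crucial observation is a \emph{factorization/recursion identity}: for fixed $h=(h_1,\dots,h_k)$, splitting the sum over subsets $J\subset[k]$ according to whether $k\in J$ or not gives
\[
h_1\cdots h_k\, D_{\delta,k,t}[f](h)=\sum_{J\subset[k-1]}(-1)^{k-|J|}\bigl(f(t+h_J+h_k)-f(t+h_J)\bigr),
\]
so that $D_{\delta,k,t}[f](h)=D_{\delta,k-1,t+h_k}[f](h_1,\dots,h_{k-1})$ with $D_{\delta,k-1,t}[f]$ replaced by its first-order difference in the $t$-variable divided by $h_k$; more precisely, writing $\Delta_{h_k}g(t):=(g(t+h_k)-g(t))/h_k$, one has
\[
D_{\delta,k,t}[f](h)=\Delta_{h_k}\bigl(s\mapsto D_{\delta,k-1,s}[f](h_1,\dots,h_{k-1})\bigr)(t).
\]
This identity is the engine for both statements and it is the only genuinely computational step; everything else is bookkeeping with iterated limits.

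For the \emph{characterization of $k$-times differentiability}, I would argue as follows. Assume $f$ is $(k-1)$-times differentiable on $I$. Using the induction hypothesis at order $k-1$, the iterated limit $\lim_{h_{k-1}\to0}\cdots\lim_{h_1\to0}D_{\delta,k-1,s}[f](h_1,\dots,h_{k-1})$ exists for every $s$ in a neighbourhood of $t$ and equals $f^{(k-1)}(s)$. Applying these $k-1$ innermost limits to the recursion identity above and using that the difference quotient $\Delta_{h_k}$ in the outer $t$-variable is continuous in its internal limiting process (the limits commute here because the inner limits converge \emph{pointwise} in the fixed parameter $h_k$, and $\Delta_{h_k}$ for fixed $h_k$ is just an algebraic combination of two evaluations), one obtains
\[
\lim_{h_{k-1}\to0}\cdots\lim_{h_1\to0}D_{\delta,k,t}[f](h)=\frac{f^{(k-1)}(t+h_k)-f^{(k-1)}(t)}{h_k}.
\]
Taking now $h_k\to0$, the resulting limit exists if and only if $f^{(k-1)}$ is differentiable at $t$, i.e.\ if and only if $f$ is $k$-times differentiable at $t$, in which case the value is $f^{(k)}(t)$. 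This proves the first assertion.

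For the \emph{integral representation} under the hypothesis $f\in\uC^k(I;X)$, I would again induct on $k$. The base case $k=1$ is the fundamental theorem of calculus: $(f(t+h_1)-f(t))/h_1=\int_0^1 f'(t+h_1 s_1)\,\de s_1$. For the step, apply the recursion identity, then the fundamental theorem of calculus in the outer variable to write $\Delta_{h_k}g(t)=\int_0^1 g'(t+h_k s_k)\,\de s_k$ with $g(s)=D_{\delta,k-1,s}[f](h_1,\dots,h_{k-1})$; note $g'(s)=D_{\delta,k-1,s}[f'](h_1,\dots,h_{k-1})$ since differentiation in $s$ passes through the finite sum, and $f'\in\uC^{k-1}$, so the induction hypothesis applies to $f'$ at order $k-1$ and yields $g'(s)=\int_0^1\cdots\int_0^1 f^{(k)}(s+\sum_{j=1}^{k-1}h_j s_j)\,\de s_1\cdots\de s_{k-1}$. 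Substituting $s=t+h_k s_k$ and combining the integrals (using continuity of $f^{(k)}$ to justify interchanging the order of integration via Fubini for Bochner integrals over the compact cube) gives exactly the claimed formula. The main obstacle, such as it is, lies in justifying the interchange of the iterated limits with the algebraic difference operator in the first part — but since the inner limits converge genuinely (not merely in some weak sense) and the outer operation for fixed $h_k$ is continuous, this is routine; the only real care needed is to keep $\delta$ small enough that all arguments $t+h_J$ and $t+h_k s_k+\sum h_j s_j$ remain in $I$, which is guaranteed by the choice $[t-k\delta,t+k\delta]\subset I$ in Notation \ref{notation}.
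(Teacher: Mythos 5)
Your proposal is correct and follows essentially the same route as the paper: both assertions are proved by induction on $k$ via the recursion $D_{\delta,k,t}[f](h)=\frac{1}{h_k}\big(D_{\delta,k-1,t+h_k}[f](h_1,\ldots,h_{k-1})-D_{\delta,k-1,t}[f](h_1,\ldots,h_{k-1})\big)$, obtained by splitting the sum over $J\subset[k]$ according to whether $k\in J$, combined with the fundamental theorem of calculus for the integral representation. Only a cosmetic slip: in your intermediate displayed identity the sign should be $(-1)^{(k-1)-|J|}$ rather than $(-1)^{k-|J|}$; the final form of the recursion, which is what you actually use, is correct.
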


\begin{proof}
The proof is done by induction on $k$. We start by proving the first assertion.\\
$k=1$: In this case the function $f$ is continuous on $I$ and 
\[
D_{\delta,1, t}[f](h_1) = \frac{f(t+h_1)-f(t)}{h_1}
\]
is simply the difference quotient, so the claim is trivially true.
\\
$k\to k+1$: Assume that $f$ is $k$-times differentiable on $I$. We observe first that
\begin{align*}
D_{\delta,(k+1), t}[f](h)&=\frac{1}{h_1\cdot\ldots\cdot h_{k+1}} \sum_{J\subset [k+1]} (-1)^{k+1-|J|}\, f(t+h_J)\\
&=\frac{1}{h_{k+1}} \bigg(\frac{1}{h_1\cdots h_k}\sum_{J\subset [k]} (-1)^{k-|J|}\, f(t+h_J+h_{k+1})\\
&\hspace{2cm}-\frac{1}{h_1\cdots h_k}\sum_{J\subset [k]} (-1)^{k-|J|}\, f(t+h_J)\bigg)\\
&= \frac{1}{h_{k+1}}\Big(D_{\delta,k, (t+h_{k+1})}[f]\big(h_1,\ldots,h_k\big)-D_{\delta,k, t}[f]\big(h_1,\ldots,h_k\big)\Big).
\end{align*}
Using the induction hypothesis and taking the successive limits in the expression above the difference quotient let rewrite itself in the following way:
\begin{align}\label{relation derivative and seminorms}
\frac{f^{(k)}(t+h_{k+1})-f^{(k)}(t)}{h_{k+1}} = \lim_{h_k\to 0}\ldots\lim_{h_1\to 0} D_{\delta,(k+1),t}[f]\big(h_1,\ldots,h_k,h_{k+1}\big).
\end{align}
Clearly, $f$ is $(k+1)$-times differentiable at $t$ if and only if the limit 
\[
\lim_{h_{k+1}\to 0}\frac{f^{(k)}(t+h_{k+1})-f^{(k)}(t)}{h_{k+1}}
\]
exists in $X$.
Thus, this limit exists if and only if the limit 
$$ \lim_{h_{k+1}\to 0}\lim_{h_k\to 0}\ldots\lim_{h_1\to 0} D_{\delta,(k+1),t}[f](h) $$
exists in $X$, and this proves the assertion. \\\\
Let us now establish the integral representation, again by induction. \\
$k=1$: In this case $f$ is differentiable on $I$. Hence, if $h_1\in I_\delta$, we have
\[
\int_0^1 f'(t+h_1s_1)\de s_1=\frac{1}{h_1}\int_0^{h_1} f'(t+s_1)\de s_1=\frac{1}{h_1}\big(f(t+h_1)-f(t)\big)=D_{\delta,1,t}[f](h_1).
\]
$k\to k+1$: Assume that $f$ is $(k+1)$-times differentiable on $I$. For $h_1,\ldots,h_{k+1}\in I_\delta$ we compute, as above,
\begin{equation*}
\begin{aligned}[t]
\int_0^1 &\cdots\int_0^1  f^{(k+1)}\bigg(t+\sum_{j=1}^{k+1} h_js_j\bigg)\de s_1\cdots\de s_{k+1} \\
&= \int_0^1\cdots\int_0^1\frac{1}{h_{k+1}}\left(f^{(k)}\bigg(t+\sum_{j=1}^k h_js_j+h_{k+1}\bigg)-f^{(k)}\bigg(t+\sum_{j=1}^k h_js_j\bigg)\right)\de s_1\cdots\de s_k\\
&=\frac{1}{h_{k+1}}\int_0^1\cdots\int_0^1f^{(k)}\bigg(t+\sum_{j=1}^k h_js_j+h_{k+1}\bigg)\de s_1\cdots\de s_k \\
\MoveEqLeft[-15]-\frac{1}{h_{k+1}}\int_0^1\cdots\int_0^1 f^{(k)}\bigg(t+\sum_{j=1}^k h_js_j\bigg)\de s_1\cdots\de s_k \\
&= \frac{1}{h_{k+1}} \Big(D_{\delta,k,(t+h_{k+1})}[f]\big(h_1,\ldots,h_k\big)-D_{\delta,k,t}[f]\big(h_1,\ldots,h_k\big)\Big)\\
&=D_{\delta,(k+1),t}[f](h).
\end{aligned}
\end{equation*}
This completes the proof of the lemma.
\end{proof}

\begin{proof}[Second proof of Proposition \ref{propsmooth}]
Define the subspaces
\[
E_n\coloneqq \big\{x\in X \st u_x \mbox{ is smooth on the interval } \R_{>n} \big\}.
\]
The assertion of the proposition follows from Corollary \ref{corolla} if we show that each $E_n$ is a $P$-subspace of $X$. Fix $n\in\N$. We define a family of seminorms using two indices, representing the derivative or order $k$ and the point $t$ of the interval $\R_{>n}$, i.e.
\[
P_n\coloneqq \big(p_{n;(k,t)}\big)_{(t,k)\in\R_{>n}\times\N}, 
\]
where each seminorm is defined in the following way: for $k\in\N$ and $t\in \R_{>n}$ take $0<\delta_{n;(k,t)}<\frac{t-n}{k}$ and let 
\[
I_{n; (t,k)}\coloneqq I_{\delta_{n;(k,t)}}\coloneqq [-\delta_{n;(k,t)},\delta_{n;(k,t)}]\setminus\{0\}.
\]
For the sake of simplicity, let us write for $x\in X$, $n,k\in\N$ and $t>n$
\[
D_{n;(k,t)}[x]: I_{n; (t,k)}^k\rightarrow Y,\qquad D_{n;(k,t)}[x]\coloneqq D_{\delta_{n;(t,k)}, k, t}[u_x],
\]
where the latter function, as the interval above, has been defined in Notation \ref{notation}. For each $n\in\N$ define the seminorms
\[
p_{n;(k,t)}:X\rightarrow\rpl,\qquad p_{n;(k,t)}(x) \coloneqq \sup\Big\{\Norm{D_{n;(k,t)}[x](h)}\st h\in I_{\delta_{n;(t,k)}}^k\Big\}.
\]
Clearly, $p_{n;(t,k)}$ is an extended seminorm for each $k\in\N$ and $t>n$ and $[p_{n;(t,k)}\le 1]$ is closed in $X$ by Remark \ref{remark supremum continuous}. Thus, $p_{n;(t,k)}$ satisfies assertion (i) from Definition \ref{definition P subspace}.
In order to see that 
\[
E_n\subset \bigcap_{(t,k)\in\R_{>n}\times\N} [p_{n;(k,t)}<\infty],
\]
we use the integral representation in the second part of Lemma \ref{das lemma}. If $x\in E_n$, then $u_x$ is smooth on the interval $\R_{>n}$ and hence on $[t-k\cdot\delta_{n;(t,k)}, t+k\cdot\delta_{n;(t,k)}]$, as it is contained in $\R_{>n}$ by the choice of $\delta_{n; (k,t)}$. Hence, for $t>n$ and $k\in\N$ we have 
\begin{align*}
p_{n;(k,t)}(x)&=\sup\Bigg\{\normm{\int_0^1\cdots\int_0^1 u_x^{(k)}\bigg(t+\sum_{j=1}^k h_js_j\bigg)\de s_1\cdots\de s_k} \st h\in I_{\delta_{n;(t,k)}}^k\Bigg\} \\
&\le \norm{u_x^{(k)}}_{\infty, [t-k\cdot\delta_{n;(t,k)}, t+k\cdot\delta_{n;(t,k)}]}.
\end{align*}
This last norm must be finite because of the continuity of the derivative $u_x^{(k)}$ on $\R_{>n}$. This shows that $E_n\subset [p_{n;(k,t)}<\infty]$, therefore $E_n$ satisfies assertion (ii) with respect to $P_n$.
\\
Finally, it remains to prove that $E_n$ satisfies assertion (iv) from Remark \ref{rem condition (iv)}. Take a sequence $(x_j)_{j\in\N}$ and assume that there is $x\in X$ such that $p_{n;(k,t)}(x-x_j)\to 0$ as $j\to \infty$ for all $k\in\N$ and all $t>n$. We prove by induction that $u_x$ is $k$-times differentiable on $\R_{>n}$ for all natural numbers $k$. Indeed, this implies that $x\in E_n$, which is the claim. \\
For $k=0$ there is nothing to do. Passing to the induction step, assume that $u_x$ is $k$-times differentiable on $\R_{>n}$. We have to show that $u_x$ is $(k+1)$-times differentiable on the same interval.
Fix $t\in\R_{>n}$, and let $f_j : I_{n; (t,k+1)}\rightarrow Y$, $j\in\N$, and $f : I_{n; (t,k+1)}\rightarrow Y$ given by
\[
f_j(s)\coloneqq \frac{u_{x_j}^{(k)}(t+s)-u_{x_j}^{(k)}(t)}{s}\,\,\, (j\in\N), \quad  \mbox{and} \quad f(s)\coloneqq \frac{u_{x}^{(k)}(t+s)-u_{x}^{(k)}(t)}{s}.
\]
Using \eqref{relation derivative and seminorms}, we get for all $s\in I_{n; (t,k+1)}$ the inequality

\begin{align*}
\norm{f(s)-f_j(s)} &= \Norm{\lim_{h_k\to 0}\ldots\lim_{h_1\to 0}\Big( D_{n;(k+1,t)}[x]\big(h_1,\ldots,h_k,s\big) \\
& \hspace{4cm}-D_{n;(k+1,t)}[x_j]\big(h_1,\ldots,h_k,s\big)\Big)} \\[5pt]
&=\lim_{h_k\to 0}\ldots\lim_{h_1\to 0} \Norm{D_{n;(k+1,t)}[x-x_j]\big(h_1,\ldots,h_k,s\big)} \\[10pt]
&\le p_{n; (k+1,t)}(x-x_j).
\end{align*}
Thus, the functions $f_j$ converge uniformly to $f$ for $j\to\infty$. Moreover, as $x_j\in E_n$ for each $j\in\N$, the limit 
$\lim_{s\to 0} f_j(s)$ exists for each $j\in \N$. Now, Lemma \ref{lemma interchange of limits} applies with $S=I_{n; (t,k+1)}$ and it guarantees the existence of the limit
$\lim_{s\to 0} f(s)$, which implies that $u_x$ is $(k+1)$-times differentiable at $t$. As $t$ is chosen arbitrarily, we deduce that $u_x$ is $(k+1)$-times differentiable on $\R_{>n}$, and this concludes the inductive proof.\\ 
Thus, $E_n$ is a $P$-subspace.
\end{proof}


\subsection{Real Analyticity}

For what is coming we follow \cite[Chapter 1]{KrPa}. Let $U\subset\R$ be open and let $X$ be a Banach space. A function $f:U\rightarrow X$ is said to be \textit{(real) analytic} at $t_0\in U$ if it can be represented by a convergent power series on some interval of positive radius centred at $t_0$ i.e., if there is $r\in\R_{>0}$ and a sequence $(x_k)_{k\in\N}$ in $X$ such that for all $t\in(t_0-r,t_0+r)$
\[
f(t)=\sum_{k=0}^\infty (t-t_0)^k x_k.
\] 
The function $f$ is said to be \textit{(real) analytic} on $W\subset U$ if it is analytic at each $t\in W$. The set of analytic functions on $W$ will be denoted by $\uC^\omega(W;X)$.

\begin{lemma} 
The following assertions hold:
\begin{enumerate}[label=(\roman*), font=\normalfont, noitemsep]
\item Let $(x_n)_{n\in\N}$ be a sequence in $X$, let $t_0\in\R$ and suppose that the power series 
\[
f(t)=\sum_{k=0}^\infty (t-t_0)^k x_k
\] 
has radius of convergence $R$. Then, for each $0<\rho<R$ there exists a constant $C_\rho>0$ such that for all $k\in\N_0$ and all $t\in (t_0-\rho,t_0+\rho)$ 
\[
\norm{f^{(k)}(t)} \le \frac{\rho\, C_\rho}{\rho-\abs{t-t_0}}\cdot \frac{k!}{\big(\rho-\abs{t-t_0}\big)^k}.
\]
\item Let $t_0\in\R$, $r>0$, and $f\in\uC^\infty\big((t_0-r,t_0+r);X\big)$. Assume that there is a constant $C>0$ such that for all $k\in\N$
\[
\norm{f^{(k)}(t)}\le C\cdot\frac{k!}{r^k} \qquad \mbox{ for all } t\in (t_0-r,t_0+r).
\]
Then $f\in \uC^\omega\big((t_0-r,t_0+r);X\big)$.
\end{enumerate}
\end{lemma}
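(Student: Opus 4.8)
For assertion (i) the plan is the classical Cauchy-estimate computation. Fix $0<\rho<R$ and pick $\rho'$ with $\rho<\rho'<R$. Since the power series converges at $t_0+\rho'$, its terms are bounded, so $M\coloneqq\sup_{n\in\N_0}\norm{x_n}(\rho')^n<\infty$. Inside its radius of convergence a power series is $\uC^\infty$ with derivatives obtained by term-by-term differentiation, so $f^{(k)}(t)=\sum_{n\ge k}\frac{n!}{(n-k)!}(t-t_0)^{n-k}x_n$; writing $s\coloneqq\abs{t-t_0}<\rho$, substituting $m=n-k$ and using $\norm{x_n}\le M(\rho')^{-n}$ gives
\[
\norm{f^{(k)}(t)}\le \frac{M\,k!}{(\rho')^k}\sum_{m\ge 0}\binom{m+k}{k}\Big(\frac{s}{\rho'}\Big)^{m}.
\]
The generating-function identity $\sum_{m\ge 0}\binom{m+k}{k}y^{m}=(1-y)^{-(k+1)}$ (valid for $\abs y<1$, applied with $y=s/\rho'$) then yields $\norm{f^{(k)}(t)}\le M\rho'\,k!\,(\rho'-s)^{-(k+1)}$. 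Finally $\rho'-s>\rho-s>0$, so enlarging the bound by replacing $\rho'$ with $\rho$ in the denominator and setting $C_\rho\coloneqq M\rho'/\rho$ gives exactly $\norm{f^{(k)}(t)}\le \frac{\rho\,C_\rho}{\rho-\abs{t-t_0}}\cdot\frac{k!}{(\rho-\abs{t-t_0})^{k}}$, as claimed (the case $k=0$ being included).

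For assertion (ii) I would argue locally at an arbitrary point. Fix $t_1\in(t_0-r,t_0+r)$ and choose $\eta>0$ so small that $[t_1-\eta,t_1+\eta]\subset(t_0-r,t_0+r)$; note that then automatically $\eta<r$, since the distance from $t_1$ to the nearer endpoint of $(t_0-r,t_0+r)$ is at most $r$. For $t\in(t_1-\eta,t_1+\eta)$ apply Taylor's formula with (Bochner-)integral remainder for the $X$-valued $\uC^\infty$ function $f$:
\[
f(t)=\sum_{k=0}^{N-1}\frac{f^{(k)}(t_1)}{k!}(t-t_1)^{k}+\frac{1}{(N-1)!}\int_{t_1}^{t}(t-\sigma)^{N-1}f^{(N)}(\sigma)\,\de\sigma .
\]
Every $\sigma$ lying between $t_1$ and $t$ belongs to $(t_0-r,t_0+r)$, so the hypothesis gives $\norm{f^{(N)}(\sigma)}\le C\,N!/r^{N}$; estimating the remainder crudely by $\frac{1}{(N-1)!}\cdot\frac{\abs{t-t_1}^{N}}{N}\cdot\frac{C\,N!}{r^{N}}=C\big(\abs{t-t_1}/r\big)^{N}$ shows that it tends to $0$ as $N\to\infty$, because $\abs{t-t_1}<\eta<r$. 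Hence the Taylor series of $f$ at $t_1$ converges to $f$ on $(t_1-\eta,t_1+\eta)$, which is precisely analyticity of $f$ at $t_1$; as $t_1$ was arbitrary, $f\in\uC^\omega\big((t_0-r,t_0+r);X\big)$.

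Neither part presents a genuine obstacle: both are standard facts about vector-valued power series, and the only points deserving a word of justification are the term-by-term differentiability of a power series inside its radius of convergence and the validity of Taylor's formula with Bochner-integral remainder for Banach-space-valued $\uC^\infty$ functions, both of which are classical. The one mild subtlety in (ii) is that the hypothesis supplies derivative bounds centred at $t_0$ while analyticity must be verified at every point $t_1$ of the interval; this is handled simply by shrinking the neighbourhood of $t_1$ so that the remainder integral stays inside $(t_0-r,t_0+r)$, where the bound is available.
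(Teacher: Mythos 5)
Your proposal is correct and follows essentially the same route as the paper: part (i) is the same binomial-series computation (you derive the coefficient bound $\norm{x_n}\le M(\rho')^{-n}$ elementarily where the paper invokes the Cauchy estimates at radius $\rho$), and part (ii) is the standard Taylor-remainder argument (you estimate the Bochner-integral remainder at each point $t_1$, while the paper applies the Weierstrass M-test and Taylor's theorem centred at $t_0$ only). There are no gaps; if anything, your pointwise treatment in (ii) is slightly more self-contained than the paper's appeal to Taylor's theorem for holomorphic vector-valued functions.
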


\begin{proof}
Let us start with assertion (i). The fact that for each $0<\rho<R$ there is a constant $C_\rho>0$ such that \[
\norm{f^{(k)}(t_0)}\le C_\rho\cdot \frac{k!}{\rho^k}
\]
for all $k\in\N$ is well known, see for example the Cauchy estimates in \cite[p.~97]{HiPh}. At this point, as $x_k=\frac{f^{(k)}(t_0)}{k!}$, we compute for $t\in (t_0-\rho,t_0+\rho)$:
\[
\norm{f^{(k)}(t)}\le \frac{C_\rho}{\rho^k}\sum_{n=k}^\infty \frac{n!}{(n-k)!}\left(\frac{\abs{t-t_0}}{\rho}\right)^{n-k} = \frac{C_\rho}{\rho^k}\cdot \frac{k!}{\left(1-\frac{\abs{t-t_0}}{\rho}\right)^{k+1}},
\]
which is the claim. \\
Passing to assertion (ii), using the Weierstrass M-Test we see immediately that 
\[
\sum_{n=0}^\infty \frac{f^{(n)}(t_0)}{n!} (t-t_0)^n
\]
converges absolutely at least on $(t_0-r,t_0+r)$. Finally, Taylor's theorem for holomorphic vector-valued functions guarantees that this power series converges to $f$.
\end{proof}

From this lemma we obtain the next proposition, which characterizes analytic functions.

\begin{prop}\label{analytic vs derivatives}
Let $U\subset\R$ be an open set and $X$ a Banach space. A function $f:U\rightarrow X$ belongs to $\uC^\omega(U;X)$ if and only if $f$ belongs to $\uC^\infty(U;X)$ and its derivatives can be controlled in the following sense: for each $t\in U$ there are constants $C,r\in\R_{>0}$ such that $(t-r,t+r)\subset U$ and
\begin{align}\label{estimate derivatives}
\norm{f^{(k)}(s)}\le C\cdot\frac{k!}{r^k}\qquad\mbox{ for all } s\in (t-r,t+r) \mbox{ and } k\in\N.
\end{align}
\end{prop}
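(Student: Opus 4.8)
The plan is to read off both implications directly from the two parts of the lemma immediately preceding the statement, the only genuine work being careful bookkeeping with nested intervals.

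For the ``only if'' direction, I would first record that $\uC^\omega(U;X)\subset\uC^\infty(U;X)$: near any point $f$ coincides with a convergent power series, which may be differentiated term by term indefinitely, so $f$ is smooth on $U$. Then I would fix $t\in U$ and use analyticity at $t$ to obtain $R\in\R_{>0}$ with $(t-R,t+R)\subset U$ on which $f(s)=\sum_{k\ge 0}(s-t)^k x_k$; the radius of convergence of this series is at least $R$, so for any fixed $0<\rho<R$ part (i) of the lemma applies on $(t-\rho,t+\rho)$ and yields a constant $C_\rho>0$ with
\[
\norm{f^{(k)}(s)}\le \frac{\rho\,C_\rho}{\rho-\abs{s-t}}\cdot\frac{k!}{(\rho-\abs{s-t})^k}
\]
for all $k\in\N_0$ and all $s\in(t-\rho,t+\rho)$. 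Restricting to $\abs{s-t}<\rho/2$ forces $\rho-\abs{s-t}>\rho/2$, so putting $r\coloneqq\rho/2$ and $C\coloneqq 2C_\rho$ gives exactly the bound \eqref{estimate derivatives}, and $(t-r,t+r)\subset(t-R,t+R)\subset U$ as required.

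For the ``if'' direction, I would fix $t\in U$ and take the constants $C,r\in\R_{>0}$ supplied by the hypothesis, so that $(t-r,t+r)\subset U$ and $\norm{f^{(k)}(s)}\le C\,k!/r^k$ for all $s\in(t-r,t+r)$ and $k\in\N$. Part (ii) of the lemma then gives $f\in\uC^\omega\big((t-r,t+r);X\big)$; in particular $f$ is analytic at $t$. Since $t\in U$ was arbitrary, $f\in\uC^\omega(U;X)$.

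There is no serious obstacle here, since the lemma already contains the analytic substance (Cauchy estimates in one direction, the Weierstrass $M$-test together with Taylor's theorem in the other). The only point demanding a little care is the interval bookkeeping in the forward implication: keeping the interval on which $f$ is represented by its Taylor series distinct from (and possibly smaller than) the radius of convergence of that series, so that part (i) of the lemma is invoked on a legitimate interval, and then shrinking once more by a factor of two to absorb the factor $\rho/(\rho-\abs{s-t})$ while staying inside $U$.
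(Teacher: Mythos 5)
Your proof is correct and follows exactly the route the paper intends: the paper gives no written proof, merely asserting that the proposition follows from the preceding lemma, and your argument is the natural fleshing-out of that deduction (part (i) plus halving the radius to absorb the factor $\rho/(\rho-\abs{s-t})$ for the forward direction, part (ii) for the converse). The interval bookkeeping you flag is handled correctly, so there is nothing to add.
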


Proposition \ref{analytic vs derivatives} can be used to define suitable seminorms in order to obtain the following result.

\begin{prop}\label{ev analytic}
A function $u:\rpl\rightarrow\caL(X;Y)$ is individually eventually real analytic if and only if it is uniformly eventually real analytic.
\end{prop}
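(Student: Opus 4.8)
The strategy mirrors that of the earlier propositions: the plan is to invoke Corollary~\ref{corolla}, so the whole task reduces to showing that for each $n\in\N$ the subspace
\[
E_n\coloneqq\big\{x\in X\st u_x\in\uC^\omega(\R_{>n};Y)\big\}
\]
is a $P$-subspace of $X$. The new difficulty relative to, say, Proposition~\ref{differentiab} is that the characterisation of analyticity in Proposition~\ref{analytic vs derivatives} is phrased through growth bounds on the derivatives $u_x^{(k)}$, which need not exist for an arbitrary $x$. I would remove this obstacle by a preliminary reduction. Since an individually eventually real analytic $u$ is in particular individually eventually smooth, Proposition~\ref{propsmooth} makes it uniformly eventually smooth; replacing $u$ by a time-shift $t\mapsto u(t_0+t)$ --- which changes neither ``individually eventually real analytic'' nor ``uniformly eventually real analytic'' --- I may assume that $u_x$ is smooth on $\R_{>0}$ for \emph{every} $x\in X$. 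By Lemma~\ref{deriv lin op} it then follows that $u^{(k)}(t)\in\caL(X;Y)$ for all $k\in\N_0$ and $t>0$, so each map $x\mapsto\norm{u_x^{(k)}(s)}=\norm{u^{(k)}(s)x}$ is a continuous seminorm on $X$.

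With this in hand, fix $n\in\N$ and take as index set $I\coloneqq\R_{>n}$, with the countable set $Q_t\coloneqq\Q\cap(0,t-n)$ for each $t\in I$, and for $r\in Q_t$ the extended seminorm
\[
p_{n;t,r}(x)\coloneqq\sup\Big\{\tfrac{r^k}{k!}\,\norm{u_x^{(k)}(s)}\st k\in\N,\ s\in(t-r,t+r)\Big\}.
\]
Being a pointwise supremum of continuous seminorms, $p_{n;t,r}$ is lower semicontinuous, so $[p_{n;t,r}\le 1]$ is closed by Remark~\ref{remark supremum continuous}, which is assertion~(i) from Definition~\ref{definition P subspace}. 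For assertion~(ii): if $x\in E_n$, then Proposition~\ref{analytic vs derivatives} supplies, for each $t$, constants $C,r_0>0$ with $(t-r_0,t+r_0)\subset\R_{>n}$ and $\norm{u_x^{(k)}(s)}\le C\,k!/r_0^k$ there; shrinking $r_0$ to a rational $r\in Q_t$ only strengthens the bound (as $k!/r^k\ge k!/r_0^k$), so $p_{n;t,r}(x)\le C<\infty$ and hence $E_n\subset\bigcap_{t\in I}\bigcup_{r\in Q_t}[p_{n;t,r}<\infty]$. It is worth noting that $I$ must genuinely be all of $\R_{>n}$: a smooth function that is analytic on a dense subset of an interval need not be analytic on it, so the universal quantifier over $t$ in Proposition~\ref{analytic vs derivatives} cannot be thinned to a countable one.

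The heart of the matter is assertion~(iii). Let $(x_\alpha)_\alpha$ be a net in $E_n$ with $x_\alpha\to x$ in $X$ such that for every $t\in\R_{>n}$ there is $r_t\in Q_t$ with $p_{n;t,r_t}(x-x_\alpha)\sto{\alpha}0$. Since all orbits --- in particular $u_x$ --- are smooth on $\R_{>n}$, by Proposition~\ref{analytic vs derivatives} it suffices to produce, for each fixed $t\in\R_{>n}$, a growth estimate for $u_x^{(k)}$ near $t$. Choose $\alpha_0$ with $p_{n;t,r_t}(x-x_{\alpha_0})\le 1$, so $\norm{u_{x-x_{\alpha_0}}^{(k)}(s)}\le k!/r_t^k$ on $(t-r_t,t+r_t)$; and, using $x_{\alpha_0}\in E_n$, pick $C,\sigma>0$ with $\norm{u_{x_{\alpha_0}}^{(k)}(s)}\le C\,k!/\sigma^k$ on $(t-\sigma,t+\sigma)$. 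Since $u_x=u_{x_{\alpha_0}}+u_{x-x_{\alpha_0}}$ as functions, all smooth near $t$, adding and setting $\tau\coloneqq\min\{r_t,\sigma\}$ yields $\norm{u_x^{(k)}(s)}\le(C+1)\,k!/\tau^k$ on $(t-\tau,t+\tau)$, which is exactly the estimate needed. Hence $x\in E_n$, so $E_n$ is a $P$-subspace, and Corollary~\ref{corolla} (applied to the shifted function, then transported back) completes the proof. I expect the two delicate points to be the smoothness reduction at the outset and, in~(iii), the reconciliation of the a priori different radii of analyticity of $x_{\alpha_0}$ and of $x-x_{\alpha_0}$ through the passage to $\tau=\min\{r_t,\sigma\}$; the remaining steps are routine bookkeeping. (One could avoid the shift by building the seminorms from the higher-order difference quotients $D_{\delta,k,t}$ of Notation~\ref{notation} and Lemma~\ref{das lemma}, as in the second proof of Proposition~\ref{propsmooth}, but this seems less transparent.)
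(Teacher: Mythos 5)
Your proof is correct and follows essentially the same route as the paper: reduce to uniform eventual smoothness via Proposition~\ref{propsmooth} and a time shift, then encode the derivative bounds of Proposition~\ref{analytic vs derivatives} as extended seminorms $p_{t,r}(x)=\sup_{k,s}\frac{r^k}{k!}\lVert u_x^{(k)}(s)\rVert$ indexed by a countable set of radii, so that $E_n=\bigcap_t\bigcup_r[p_{t,r}<\infty]$ is a $P$-subspace. The only differences are cosmetic (radii taken in $\Q\cap(0,t-n)$ rather than a fixed $(0,1]\cap\Q$, and an explicit verification of assertion~(iii) via $\tau=\min\{r_t,\sigma\}$, which the paper dismisses as trivial).
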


\begin{proof}
For each $n\in\N$ consider the subspace
\[
E_n\coloneqq \big\{x\in X \st u_x \mbox{ is real analytic on } \R_{>n}\big\}.
\]
In order to apply Corollary \ref{corolla}, we prove that each $E_n$ is a $P$-subspace. 
Since each orbit $u_x$ is real analytic on $(t_x,\infty)$, it is also smooth on the same interval by Proposition \ref{analytic vs derivatives}. Proposition \ref{propsmooth} implies that $u$ is eventually smooth from a starting point $t_0\in\rpl$, and, without loss of generality, we may assume $t_0=0$ (by passing to the shifted function $\rpl\rightarrow\caL(X;Y)$, $t\mapsto u(t_0+t)$ if necessary).
\\
Put $Q\coloneqq (0,1]\cap\Q$. For each $r\in Q$ and $t\in\R_{>1}$ define the seminorm
\[
p_{t,r}: X\rightarrow\rplc, \quad p_{t,r}(x)\coloneqq \sup\bigg\{\frac{r^k}{k!}\big\lVert u_x^{(k)}(s)\big\rVert\st s\in (t-r,t+r),\, k\in\N\bigg\}.
\]
Given $x\in X$, by Proposition \ref{analytic vs derivatives} the orbit $u_x$ is real analytic at $t$ if and only if there is $r\in Q$ such that $p_{t,r}(x)<\infty$. Thus,
\[
E_n=\bigcap_{t\in\R_{>n}}\bigcup_{r\in Q} [p_{t,r}(x)<\infty],
\]
which is assertion (ii) from Definition \ref{definition P subspace}, while assertion (i) is satisfied by Remark \ref{remark supremum continuous}. Finally, $E_n$ satisfies assertion (iii) trivially and therefore it is a $P$-subspace of $X$.
\end{proof}

We now lay out some consequences of Proposition \ref{ev analytic}. Firstly, we introduce some useful notation.

\begin{nota}\label{notation Minkowski sum}
For $z\in\C$ and $r\in\R_{>0}$ we denote the open ball centred at $z$ with radius $r$ by
\[
B(z,r)\coloneqq \{w\in\C\st \abs{w-z}<r\}.
\]
If $A_1,\ldots,A_n$ are subsets of $\C$ we write
\[
\sum_{j=1}^n A_j\coloneqq A_1+\ldots+A_n\coloneqq\Big\{\sum_{j=1}^n a_j \st a_1\in A_1,\ldots,a_n\in A_n\Big\}
\]
for the Minkowski sum of the sets $A_1,\ldots,A_n$.
\end{nota}

\begin{prop}\label{prop real analytic vs holoholo}
Let $V\subset\R$ be open and let $u:V\rightarrow\caL(X;Y)$ be a function such that $u_x\in\uC^\omega(V;Y)$ for every $x\in X$, where
\[
u_x:V\rightarrow Y, \qquad u_x(t)\coloneqq u(t)x \qquad\qquad (x\in X).
\]
Then there exists an open set $U\subset\C$ containing $V$ such that $u$ extends to $U$ to a holomorphic function. If $V$ is connected, then $U$ is a domain.
\end{prop}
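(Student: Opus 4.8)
The plan is to upgrade the pointwise analyticity of the orbits $u_x$ to a \emph{locally uniform} bound on the operator norms $\nnorm{u^{(k)}(t)}_{\caL(X;Y)}$, using the same Baire-type argument as in the proof of Proposition~\ref{ev analytic}, and then to glue the resulting local power-series extensions into a single holomorphic function on a complex neighbourhood of $V$.

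\textbf{Step 1: a locally uniform estimate.} Fix $t_0\in V$. Every orbit $u_x$ is analytic, hence smooth, on $V$, so by Lemma~\ref{deriv lin op} we have $u^{(k)}(t)\in\caL(X;Y)$ for all $t\in V$ and $k\in\N_0$; in particular $x\mapsto u^{(k)}(t)x$ is continuous. For $r\in\Q_{>0}$ with $(t_0-r,t_0+r)\subset V$ define the extended seminorm
\[
p_{t_0,r}(x)\coloneqq\sup\Big\{\frac{r^k}{k!}\,\norm{u^{(k)}(s)x}\st s\in(t_0-r,t_0+r),\ k\in\N\Big\},
\]
whose unit ball is closed by Remark~\ref{remark supremum continuous}. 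By Proposition~\ref{analytic vs derivatives}, $X=\bigcup_{r}[p_{t_0,r}<\infty]=\bigcup_{r}\bigcup_{N\in\N}[p_{t_0,r}\le N]$, a countable union of closed sets; the Baire category theorem (Theorem~\ref{meagre}) produces $r_0$ and $N_0$ such that $[p_{t_0,r_0}\le N_0]$, and hence (by homogeneity) $[p_{t_0,r_0}\le 1]$, has nonempty interior. By Lemma~\ref{seminorm contin}, $p_{t_0,r_0}$ is therefore a continuous, everywhere-finite seminorm on $X$, so $p_{t_0,r_0}(x)\le C\norm{x}$ for some $C>0$; equivalently,
\[
\nnorm{u^{(k)}(s)}_{\caL(X;Y)}\le C\,\frac{k!}{r_0^{\,k}}\qquad\text{for all }k\in\N\text{ and }s\in(t_0-r_0,t_0+r_0).
\]

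\textbf{Step 2: a local holomorphic extension.} Put $M\coloneqq\max\{C,\nnorm{u(t_0)}_{\caL(X;Y)}\}$. The $\caL(X;Y)$-valued power series
\[
\widetilde u_{t_0}(z)\coloneqq\sum_{k=0}^\infty\frac{u^{(k)}(t_0)}{k!}\,(z-t_0)^k
\]
converges in operator norm on $B(t_0,r_0)$, being dominated there by $M\sum_k(\abs{z-t_0}/r_0)^k$, and is thus holomorphic on $B(t_0,r_0)$. For real $t\in(t_0-r_0,t_0+r_0)$ and $x\in X$, the Taylor remainder bound
\[
\Norm{u_x(t)-\sum_{k=0}^{m-1}\frac{u_x^{(k)}(t_0)}{k!}(t-t_0)^k}\le \frac{\abs{t-t_0}^m}{m!}\sup_{\abs{s-t_0}<r_0}\nnorm{u_x^{(m)}(s)}\le C\norm{x}\Big(\frac{\abs{t-t_0}}{r_0}\Big)^m
\]
tends to $0$ as $m\to\infty$, whence $\widetilde u_{t_0}(t)x=\sum_{k}\frac{u_x^{(k)}(t_0)}{k!}(t-t_0)^k=u_x(t)=u(t)x$. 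So $\widetilde u_{t_0}$ restricts to $u$ on $(t_0-r_0,t_0+r_0)$.

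\textbf{Step 3: gluing and connectedness.} Choosing such a radius $r_{t_0}$ for every $t_0\in V$, set $U\coloneqq\bigcup_{t_0\in V}B(t_0,r_{t_0})$; this is open and contains $V$. Define $\widetilde u$ on $U$ by $\widetilde u(z)\coloneqq\widetilde u_{t_0}(z)$ whenever $z\in B(t_0,r_{t_0})$. This is unambiguous: if $B(t_0,r_0)\cap B(t_1,r_1)\neq\emptyset$, i.e.\ $\abs{t_0-t_1}<r_0+r_1$, then $z^\ast\coloneqq\frac{r_1t_0+r_0t_1}{r_0+r_1}$ is a \emph{real} point lying in both disks, so $\widetilde u_{t_0}$ and $\widetilde u_{t_1}$ agree---both equalling $u$---on a nonempty open real interval around $z^\ast$ inside the convex (hence connected) open set $B(t_0,r_0)\cap B(t_1,r_1)$, and therefore agree on all of it by the identity theorem for Banach-space-valued holomorphic functions. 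Thus $\widetilde u\colon U\to\caL(X;Y)$ is a well-defined holomorphic extension of $u$. If $V$ is connected, then $U$ is a union of connected sets $B(t_0,r_{t_0})$ each meeting $V$ at $t_0$, hence $U$ is connected, i.e.\ a domain. The step I expect to require the most care is this last one: the key point that makes the local extensions automatically consistent is the elementary observation that two disks centred on $\R$, once they intersect, already intersect along a real segment, so each $\widetilde u_{t_0}$ is pinned down on every overlap by its common restriction to $\R$; Step~1, by contrast, is essentially a repetition of the proof of Proposition~\ref{ev analytic}.
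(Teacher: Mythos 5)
Your proof is correct, and its skeleton --- a pointwise Baire argument on the countable family of extended seminorms $p_{t_0,r}$ to force an $x$-independent radius of convergence at each $t_0\in V$, followed by taking $U$ to be the union of the resulting disks --- is the same as the paper's. You diverge in two places, both to your advantage in terms of self-containedness. First, where the paper stops at ``$[p_{t,r_t}<\infty]=X$'' (each orbit extends holomorphically to $B(t,r_t)$, so $u$ is \emph{strongly} holomorphic on $U$) and then invokes the standard but nontrivial fact that strong holomorphy of an $\caL(X;Y)$-valued function implies holomorphy in the operator norm, you push the Baire argument one step further: non-empty interior of $[p_{t_0,r_0}\le 1]$ plus Lemma \ref{seminorm contin} makes $p_{t_0,r_0}$ a continuous everywhere-finite seminorm, hence $p_{t_0,r_0}(x)\le C\norm{x}$, which gives the locally uniform bound $\nnorm{u^{(k)}(s)}\le Ck!/r_0^k$ and lets you write down a power series converging in $\caL(X;Y)$ directly. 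This buys you an explicitly norm-holomorphic extension without appealing to the strong-vs-norm holomorphy theorem, at the cost of the extra continuity step. Second, you verify the consistency of the local extensions on overlaps via the identity theorem (using that two disks centred on $\R$ intersect along a real segment), a point the paper's proof passes over silently; this is a worthwhile addition rather than a correction, since in the paper's formulation the gluing happens orbitwise and is equally in need of that observation. Both arguments are sound; yours is the more quantitative and more carefully glued version of the same idea.
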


\begin{proof}
Assume that for all $x\in X$ the orbits $u_x$ are functions in $\uC^\omega(V;Y)$. Then, given $t\in V$, for every $x\in X$ there is a positive radius $r_x\in\R_{>0}$ such that $u_x$ can be represented by a power series on $(t-r_x,t+r_x)$. 
By Proposition \ref{analytic vs derivatives}, $u_x\in\uC^\infty(V;Y)$ for all $x\in X$ and that the estimates \eqref{estimate derivatives} on the derivatives are satisfied on $(t-r_x,t+r_x)$ for all $x\in X$. Similarly as in the proof of Proposition \ref{ev analytic}, define for each $r\in Q\coloneqq (0,1]\cap\Q$ the extended seminorm
\[
p_{t, r}: X\rightarrow\rplc, \quad p_{t,r}(x)\coloneqq \sup\bigg\{\frac{r^k}{k!}\big\lVert u_x^{(k)}(s)\big\rVert\st s\in V\cap (t-r,t+r),\, k\in\N\bigg\}.
\]
Then $[p_{t,r}\le 1]$ is closed in $X$ by Remark \ref{remark supremum continuous} and
\[
\bigcup_{r\in Q} [p_{t,r}<\infty]=X \qquad (t\in V)
\]
since $u_x$ is real analytic at $t$ for every $x\in X$. Thus, as each subspace $[p_{t,r}<\infty]$ is a $P$-subspace, at least one of them is equal to $X$ by Proposition \ref{prop baire}. It follows that for some $r_t\in\R_{>0}$ the orbit $u_x$ can be represented by a power series centred at $t$ on the interval $(t-r_t,t+r_t)$ for every $x\in X$. The radius of convergence can be chosen independently of $x$ for every $t\in V$ and $u_x$ extends holomorphically to $B(t,r_t)$ for every $x\in X$. Finally, $u_x$ extends holomorphically to
\[
U\coloneqq \bigcup_{t\in V} B(t,r_t)
\]
for every $x\in X$, which implies that $u$ is strongly holomorphic on $U$ and therefore holomorphic on $U$. If $V$ is connected, then $U$ is clearly path-connected and hence connected and thus a domain.
\end{proof}

Proposition \ref{ev analytic} and Proposition \ref{prop real analytic vs holoholo} lead to the following obvious consequence.

\begin{corolla}
Let $u:\rpl\rightarrow\caL(X;Y)$ be an individually eventually real analytic function. Then there exists a time $t_0\in\rpl$ and a domain $U_0\subset\C$ with $\R_{>t_0}\subset U_0$ such that $u$ extends holomorphically to $U_0$. 
\end{corolla}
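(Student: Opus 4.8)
\emph{Proof proposal.} The plan is to chain together the two immediately preceding results, with essentially no extra work. First I would invoke Proposition \ref{ev analytic}: since $u$ is individually eventually real analytic, it is in fact uniformly eventually real analytic, so there exists a time $t_0\in\rpl$ such that for every $x\in X$ the orbit $u_x$ is real analytic on the interval $\R_{>t_0}$. Equivalently, writing $V\coloneqq\R_{>t_0}$, we have $u_x\in\uC^\omega(V;Y)$ for all $x\in X$.

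Next I would apply Proposition \ref{prop real analytic vs holoholo} to the open set $V\subset\R$ and to the restriction $u|_V:V\rightarrow\caL(X;Y)$. The hypothesis of that proposition is precisely the conclusion of the previous paragraph, so it produces an open set $U_0\subset\C$ with $V\subset U_0$ to which $u$ extends holomorphically. Finally, $V=\R_{>t_0}$ is an interval, hence connected, so the last sentence of Proposition \ref{prop real analytic vs holoholo} gives that $U_0$ is a domain. Since $\R_{>t_0}\subset U_0$ by construction, this yields exactly the asserted $t_0$ and $U_0$.

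I do not expect a genuine obstacle here: all the analytic content (the passage from individual to uniform eventual real analyticity via the $P$-subspace machinery, and the construction of the holomorphic extension by a union of balls of locally uniform radius) has already been carried out in Propositions \ref{ev analytic} and \ref{prop real analytic vs holoholo}. The only point worth recording is the trivial observation that a half-line is connected, which is what upgrades the open set $U_0$ to a domain.
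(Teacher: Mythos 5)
Your proposal is correct and is exactly the argument the paper intends: the paper states this corollary as an immediate consequence of Proposition \ref{ev analytic} (to pass from individual to uniform eventual real analyticity, yielding $t_0$) followed by Proposition \ref{prop real analytic vs holoholo} applied to $V=\R_{>t_0}$, with connectedness of the half-line upgrading the open extension set to a domain. No gaps.
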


We say that a function $T:\rpl\rightarrow\caL(X)$ has the \textit{semigroup property} if 
\[
T(s+t)=T(s)T(t) \qquad \mbox{ for all } s,t\in \rpl. 
\]
In the case of individually eventually real analytic operator-valued functions having the semigroup property, we can say a little more.

\begin{lemma}\label{lemma composition of derivatives}
Let $T:\rpl\rightarrow\caL(X)$ have the semigroup property and assume that there is some $t_0\in\rpl$ such that $T_x\in\uC^\infty(\R_{>t_0};X)$ for all $x\in X$. Then, for all $s,t\in\R_{>t_0}$ and all $k,l\in\N_0$ 
\[
T^{(l)}(t)T^{(k)}(s)=T^{(l+k)}(t+s).
\]
\end{lemma}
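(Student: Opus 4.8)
The plan is to differentiate the semigroup identity $T(t+s)x=T(t)T(s)x$ separately in $t$ and in $s$, using repeatedly the elementary fact that a \emph{fixed} bounded operator commutes with differentiation of a vector‑valued function. First I would record the two standing ingredients. Since by hypothesis every orbit $T_y$ ($y\in X$) is smooth on $\R_{>t_0}$, Lemma~\ref{deriv lin op}, applied on arbitrary open subintervals of $\R_{>t_0}$, gives $T^{(j)}(r)\in\caL(X)$ for every $r>t_0$ and every $j\in\N_0$; and since $s,t>t_0\ge 0$ we have $t+s>t_0$, so $T_x$ is smooth at $t+s$ for every $x\in X$. Throughout, I use the definition \eqref{equation strong derivative} of the operators $T^{(j)}(r)$, i.e.\ $T^{(j)}(r)y=T_y^{(j)}(r)$.

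\emph{Step 1 (differentiation in $t$; the case $k=0$).} Fix $s>t_0$ and $x\in X$, and put $y\coloneqq T(s)x\in X$. On $\R_{>t_0}$ the semigroup property reads $T(t)y=T(t+s)x=T_x(t+s)$, an identity between two smooth $X$‑valued functions of $t$. Differentiating it $l$ times in $t$, the left‑hand side gives $T_y^{(l)}(t)=T^{(l)}(t)T(s)x$ by definition of $T^{(l)}(t)$, and the right‑hand side gives $T_x^{(l)}(t+s)=T^{(l)}(t+s)x$. Since $x$ is arbitrary, this yields
\[
T^{(l)}(t)T(s)=T^{(l)}(t+s)\qquad\text{for all }s,t>t_0,\ l\in\N_0 .
\]

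\emph{Step 2 (differentiation in $s$).} Fix $t>t_0$, $l\in\N_0$ and $x\in X$, and consider $\varphi\colon\R_{>t_0}\to X$, $\varphi(s)\coloneqq T^{(l)}(t)T(s)x=T^{(l)}(t)\,T_x(s)$. Because $T^{(l)}(t)$ is a fixed bounded linear operator and $T_x$ is smooth on $\R_{>t_0}$, the map $\varphi$ is smooth with $\varphi^{(k)}(s)=T^{(l)}(t)\,T_x^{(k)}(s)=T^{(l)}(t)T^{(k)}(s)x$ for every $k\in\N_0$ (differentiate the quotient defining the derivative and pull the continuous operator $T^{(l)}(t)$ through the limit, then iterate). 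On the other hand, Step 1 gives $\varphi(s)=T^{(l)}(t+s)x=T_x^{(l)}(t+s)$, whose $k$‑th derivative in $s$ is $T_x^{(l+k)}(t+s)=T^{(l+k)}(t+s)x$. Comparing the two expressions for $\varphi^{(k)}(s)$ and letting $x$ vary gives $T^{(l)}(t)T^{(k)}(s)=T^{(l+k)}(t+s)$, which is the claim. (Equivalently, Steps 1–2 amount to computing the iterated partial derivative $\partial_s^{k}\partial_t^{l}$ of $(t,s)\mapsto T(t+s)x$ in two ways; no Clairaut‑type result is needed since this function factors through $(t,s)\mapsto t+s$.)

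I do not expect a genuine obstacle here: the argument is essentially bookkeeping. The only two points deserving attention are the appeal to Lemma~\ref{deriv lin op} — it is precisely the boundedness of $T^{(l)}(t)$ that licenses commuting it past the $s$‑differentiation in Step 2 — and the trivial but necessary check that every argument of the form $t+s$ above lies in $\R_{>t_0}$, so that the relevant orbits are smooth there. Notably, no interchange‑of‑limits subtlety arises, since $T^{(l)}(t)$ enters as a single fixed bounded operator and not as a limiting object.
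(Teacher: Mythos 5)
Your proof is correct, and it takes a genuinely different route from the paper's. The paper proves the identity by staying entirely at the level of the higher-order difference quotients $D_{\delta,k,t}[T]$ from Notation \ref{notation}: it uses the semigroup property to verify the combinatorial product identity $D_{\delta,l,t}[T](h_{k+1},\ldots,h_{k+l})\,D_{\delta,k,s}[T](h_1,\ldots,h_k)=D_{\delta,l+k,s+t}[T](h_1,\ldots,h_{k+l})$ and then passes to the iterated strong limits supplied by Lemma \ref{das lemma}. You instead differentiate the orbit identity $T_y(t)=T_x(t+s)$ (with $y=T(s)x$) $l$ times in $t$ and then $k$ times in $s$, invoking Lemma \ref{deriv lin op} so that the fixed bounded operator $T^{(l)}(t)$ can be pulled through the difference-quotient limits; your bookkeeping of where smoothness is needed (always at arguments $t+s>t_0$) is accurate, and no interchange-of-limits issue arises, exactly as you note. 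Your argument is more elementary and shorter, and it makes transparent that only boundedness of the strong derivatives plus the chain rule for $t\mapsto T_x(t+s)$ is used; the paper's version buys uniformity with the rest of Section \ref{section differentiability}, reusing the operators $D_{\delta,k,t}[T]$ (already needed for the second proof of Proposition \ref{propsmooth}) and expressing everything through $T(\cdot)$ itself rather than through derivatives of lower order. Both proofs are complete; yours would serve as a valid and arguably cleaner replacement.
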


All operators in the assertion of Lemma \ref{lemma composition of derivatives} exist by Lemma \ref{deriv lin op}. We stress that the derivatives of $T$ as operators in $\caL(X)$ should be understood with respect to the strong operator topology, cf.\ Remark \ref{remark strong vs norm derivative}.

\begin{rem}
When $T$ is a $C_0$-semigroup, then Lemma \ref{lemma composition of derivatives} follows easily from Proposition \ref{prop pazy}. In fact, if $A$ is the generator of $T$, then $T(t)X\subset\dom(A^n)$ for all $t\in\R_{> t_0}$ and all $n\in\N$, thus
\[
T^{(l)}(t)T^{(k)}(s)=A^lT(t)A^kT(s)=A^{l+k}T(t)T(s)=A^{l+k}T(s+t)=T^{(l+k)}(s+t).
\]
\end{rem}

\begin{proof}[Proof of Lemma \ref{lemma composition of derivatives}]
We make use of Lemma \ref{das lemma}, where all appearing limits should be understood to converge with respect the strong operator topology. For $\delta>0$ small enough
\[
T^{(k)}(s)=\lim_{h_k\to 0}\cdots\lim_{h_1\to 0} D_{\delta,k,s}[T](h_1,\ldots,h_k)
\]
and
\[
T^{(l)}(t)=\lim_{h_{k+l}\to 0}\cdots\lim_{h_{k+1}\to 0} D_{\delta,l,t}[T](h_{k+1},\ldots,h_{k+l}).
\]
Observe now that by the semigroup property
\begin{align*}
&D_{\delta,l,t}[T](h_{k+1},\ldots,h_{k+l})D_{\delta,k,s}[T](h_1,\ldots,h_k)  \\
&=\frac{1}{h_{k+1}\cdot\ldots\cdot h_{k+l}} \sum_{L\subset [l]} (-1)^{k-|L|}\, T(t+h_{k+L})
\frac{1}{h_1\cdot\ldots\cdot h_k} \sum_{K\subset [k]} (-1)^{k-|K|}\, T(s+h_K)\\
&=\frac{1}{h_{k+l}\cdots h_1}\sum_{L\subset [l]}\sum_{K\subset [k]} (-1)^{k+l-|K|-|L|}T(s+t+h_{k+L}+h_K)\\
&=\frac{1}{h_1\cdots h_{l+k}}\sum_{J\subset[l+k]}(-1)^{l+k-|J|}T(s+t+h_J)\\
&=D_{\delta,\,l+k,\,s+t}[T](h_1,\ldots,h_k,h_{k+1},\ldots,h_{k+l})
\end{align*}
Hence, by the continuity of the operators and by taking the product we have
\begin{align*}
T^{(l)}(t)&T^{(k)}(s) \\
&=\lim_{h_{k+l}\to 0}\cdots\lim_{h_{k+1}\to 0} D_{\delta,l,t}[T](h_{k+1},\ldots,h_{k+l})\lim_{h_k\to 0}\cdots\lim_{h_1\to 0} D_{\delta,k,s}[T](h_1,\ldots,h_k) \\
&=\lim_{h_{k+l}\to 0}\cdots\lim_{h_{k+1}\to 0}\lim_{h_k\to 0}\cdots\lim_{h_1\to 0} D_{\delta,l,t}[T](h_{k+1},\ldots,h_{k+l}) D_{\delta,k,s}[T](h_1,\ldots,h_k) \\
&= \lim_{h_{k+l}\to 0}\cdots\lim_{h_1\to 0}D_{\delta,\,l+k,\,s+t}[T](h_1,\ldots,h_k,h_{k+1},\ldots,h_{k+l})\\
&= T^{(l+k)}(t+s).
\end{align*}
\end{proof}

\begin{prop}\label{prop domain of holomorphy}
Let $T:\rpl\rightarrow\caL(X)$ have the semigroup property and assume that there is some $t_0\in\R_{>0}$ such that $T_x\in\uC^\omega(\R_{>t_0};X)$ for all $x\in X$. Then, for each $t\in\R_{>t_0}$, there exists an open domain $\Omega_t$ of $\C$ such that:
\begin{enumerate}[label=(\roman*), font=\normalfont, noitemsep]
\item $\R_{\ge t}\subset\Omega_t$ and $\{h\}+\Omega_t\subset\Omega_t$ for all $h\in\rpl$;
\item $(\Omega_t,+)$ is a semigroup;
\item there is an angle $\theta_t\in (0,\pi/2]$ and a sector of the form
\[
S(t,\theta_t)\coloneqq \big\{z\in\C\st \re z>t \mbox{ and } \abs{\arg z-t}<\theta_t\big\} \qquad \mbox{(as in Figure \ref{fig:settore})}
\]
which is contained in $\Omega_t$;
\item the semigroup $T$ can be extended to an holomorphic function $T:\Omega_t\rightarrow\caL(X)$ which conserves the semigroup property on $\Omega_t$, that is,
\[
T(z+w)=T(z)T(w) \qquad \mbox{ for all } \quad z,w\in\Omega_t\cup\rpl.
\] 
\end{enumerate}
\end{prop}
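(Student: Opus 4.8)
The plan is to upgrade the real analyticity of the orbits to a complex‑analytic extension via Proposition~\ref{prop real analytic vs holoholo}, and then to use the semigroup property twice to enlarge the resulting domain — first to a horizontal half‑strip, then to a sector. Since $\R_{>t_0}$ is open and connected, Proposition~\ref{prop real analytic vs holoholo} applied to the restriction of $T$ to $\R_{>t_0}$ produces a domain $U\subset\C$ with $\R_{>t_0}\subset U$ and a holomorphic extension $T\colon U\to\caL(X)$. Fix $t>t_0$ and choose $r\in(0,t-t_0)$ with $B(t,r)\subset U$; then $z-\lambda\in B(t,r)\subset U$ for every $z\in B(t+\lambda,r)$ and every $\lambda\ge 0$.

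\emph{First enlargement.} For each $\lambda\ge 0$ the map $z\mapsto T(\lambda)T(z-\lambda)$ is holomorphic on $B(t+\lambda,r)$ and agrees with $T$ on the real points of that ball, by the semigroup property on $\rpl$. Hence, by the identity theorem, these maps agree on overlaps and glue to a holomorphic extension of $T$ to the convex set
\[
\mathrm{St}(t,r):=\rpl+B(t,r)=\bigcup_{\lambda\ge 0}B(t+\lambda,r),
\]
which contains $\{z\in\C:\re z\ge t,\ \abs{\im z}<r\}$, hence also $B(t,r)$ and $\R_{\ge t}$.

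\emph{Second enlargement.} For $n\in\N$ the map $z\mapsto T(z/n)^n$ is holomorphic on $n\,\mathrm{St}(t,r)=\rpl+B(nt,nr)$ and agrees with $T$ on the real points, by the $n$‑fold semigroup property; since for any $n,m$ the (convex, hence connected) open set $n\,\mathrm{St}(t,r)\cap m\,\mathrm{St}(t,r)$ contains a real half‑line, the identity theorem again makes these maps compatible. Thus $T$ extends holomorphically to $\Omega_t:=\bigcup_{n\in\N}n\,\mathrm{St}(t,r)$, and I claim it works. It is open, and connected because every $n\,\mathrm{St}(t,r)$ meets $\mathrm{St}(t,r)$, so it is a domain; since $\mathrm{St}(t,r)$ is convex and invariant under adding elements of $\rpl$, one gets at once $z+h\in n\,\mathrm{St}(t,r)$ for $h\ge 0$ and — writing $\tfrac{z+w}{n+m}$ as a convex combination of $z/n$ and $w/m$ — that $z+w\in(n+m)\,\mathrm{St}(t,r)$ whenever $z\in n\,\mathrm{St}(t,r)$, $w\in m\,\mathrm{St}(t,r)$; this yields (ii) and the translation invariance in (i), and $\R_{\ge t}\subset\mathrm{St}(t,r)\subset\Omega_t$ gives the rest of (i). For (iii), a short estimate shows that for $z=\varrho e^{\ui\varphi}$ with $\varrho>2t$ one has $z\in n\,\mathrm{St}(t,r)$ for $n\approx\varrho\abs{\sin\varphi}/r$, so $\Omega_t$ contains a truncated sector $\{z:\abs{\arg z}<\varphi_0,\ \abs{z}>2t\}$ for some $\varphi_0\in(0,\pi/2]$; combining this with $B(t,r)\subset\Omega_t$ and $\{\re z\ge t,\ \abs{\im z}<r\}\subset\Omega_t$ gives $S(t,\theta_t)\subset\Omega_t$ with $\theta_t:=\tfrac12\min\{\varphi_0,\arctan(r/t)\}$. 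Finally, (iv): for fixed $w$ the holomorphic functions $z\mapsto T(z+w)$ and $z\mapsto T(z)T(w)$ on the domain $\Omega_t$ coincide on the real half‑line $\Omega_t\cap\R$ (semigroup property on $\rpl$), hence everywhere; running this first with $w\in\rpl$, then deducing $T(w)T(z)=T(z)T(w)$ for $w\in\rpl$, $z\in\Omega_t$ in the same way, and then letting $w$ range over $\Omega_t$, one obtains $T(z+w)=T(z)T(w)$ for all $z,w\in\Omega_t\cup\rpl$.

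\emph{Main obstacle.} The delicate step is the second enlargement: merely controlling the radius of convergence of $T$ at individual real points — say via Lemma~\ref{lemma composition of derivatives} — reproduces only the fixed width $r$ of the half‑strip, never a sector, so one must instead use the semigroup identity $T(\varrho e^{\ui\varphi})=T(\varrho e^{\ui\varphi}/n)^n$ directly: it is this that makes the scaled copies $n\,\mathrm{St}(t,r)$ fan out into a sector. The remaining work — compatibility of all the patchings via the identity theorem, the convexity bookkeeping behind (ii), and the elementary geometry behind (iii) — is routine.
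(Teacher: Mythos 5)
Your proof is correct, and while the final domain you construct is the same as the paper's ($\bigcup_{n}\bigl(\rpl+B(nt,nr)\bigr)$ coincides with the paper's $\bigcup_n\bigcup_{nt\le s<(n+1)t}B(s,nr)$), the key extension step is carried out by a genuinely different and cleaner mechanism. The paper first invokes Lemma \ref{lemma composition of derivatives} to show via $T^{(k)}(s)=T^{(k)}(t)T(s-t)$ that the Taylor series at every $s\ge t$ has radius at least $r$, and then extends to $B(nt,nr)$ by an explicit Cauchy-product/Mertens computation identifying $T(z)^n$ with the Taylor series of $T$ at $nt$ and reading off that its radius of convergence is at least $nr$. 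You bypass both the lemma on composition of derivatives and the power-series bookkeeping entirely: you simply declare the extensions to be $z\mapsto T(\lambda)T(z-\lambda)$ on translated balls and $z\mapsto T(z/n)^n$ on the dilated half-strips, check agreement with $T$ on the real points via the semigroup identity, and glue with the identity theorem (the overlaps being convex and containing real segments or half-lines). This buys a shorter argument that makes the role of the $n$-fold semigroup identity transparent; what it gives up is the explicit Taylor-coefficient information $T^{(k)}(nt)/k!$ at the centres $nt$, which the paper's computation produces as a by-product. You are also more careful than the paper in one respect: you explicitly route the passage from ``each orbit $T_x$ is real analytic'' to ``$T$ is $\caL(X)$-valued holomorphic near $t$ with an $x$-independent radius'' through Proposition \ref{prop real analytic vs holoholo}, whereas the paper asserts this implicitly. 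Your treatments of (i), (ii) (convexity of $\rpl+B(t,r)$ and the convex-combination trick) and (iv) (two applications of the identity theorem, minding the order of the operator product) match the paper's in substance, and the deferred ``short estimate'' in (iii) is indeed routine and no less detailed than the paper's own bare assertion that $\theta_t=\arcsin(r/t)$ works.
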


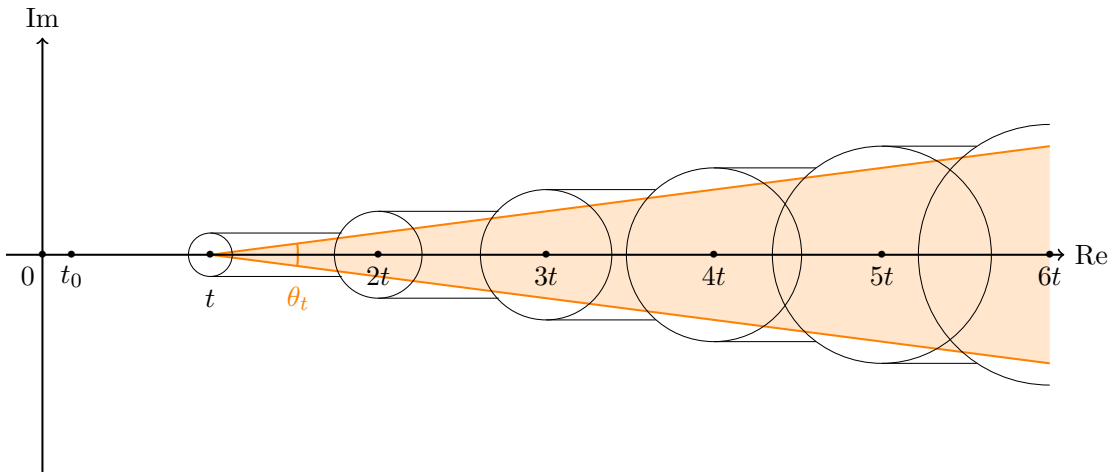
\begin{figure}[h]
\centering
\scalebox{0.96}{
\begin{tikzpicture}
\fill[orange!20!white] (2.3, 0)--(13.8,1.5)--(13.8, -1.5)--(2.3, 0);
 \draw[thick, orange]
    (13.8,-1.5) coordinate (a) 
    -- (2.3,0) coordinate (b) 
    -- (13.8,1.5) coordinate (c) 
    pic[draw=orange, -, angle eccentricity=1.2, angle radius=1.2cm]
    {angle=a--b--c};
    \draw[orange] node at (3.5, -0.6) {$\theta_t$};
\draw[->, thick] (-0.5,0)--(14,0) node[right]{$\re$};
\draw[->, thick] (0,-3)--(0,3) node[above]{$\im$};
\draw[] node at (-0.2, -0.3) {$0$};
\draw[] node at (0, 0) {\tiny\textbullet};
\draw[] node at (0.4, 0) {\tiny\textbullet};
\draw[] node at (0.4, -0.3) {$t_0$};
\draw[] node at (2.3, 0) {\tiny\textbullet};
\draw[] node at (4.6, 0) {\tiny\textbullet};
\draw[] node at (6.9, 0) {\tiny\textbullet};
\draw[] node at (9.2, 0) {\tiny\textbullet};
\draw[] node at (11.5, 0) {\tiny\textbullet};
\draw[] node at (13.8, 0) {\tiny\textbullet};
\draw[] node at (2.3, -0.6) {$t$};
\draw[] node at (4.6, -0.3) {$2t$};
\draw[] node at (6.9, -0.3) {$3t$};
\draw[] node at (9.2, -0.3) {$4t$};
\draw[] node at (11.5, -0.3) {$5t$};
\draw[] node at (13.8, -0.3) {$6t$};
%
%
\draw (2.3,0) circle (0.3cm);
\draw (4.6,0) circle (0.6cm);
\draw (6.9,0) circle (0.9cm);
\draw (9.2,0) circle (1.2cm);
\draw (11.5,0) circle (1.5cm);
\draw (13.8,1.8) arc (90:270:1.8);
\draw (2.3,0.3) -- (4.1,0.3);
\draw (2.3,-0.3) -- (4.1,-0.3);
\draw (4.6,0.6) -- (6.25,0.6);
\draw (4.6,-0.6) -- (6.25,-0.6);
\draw (6.9,0.9) -- (8.4,0.9);
\draw (6.9,-0.9) -- (8.4,-0.9);
\draw (9.2,1.2) -- (10.6,1.2);
\draw (9.2,-1.2) -- (10.6,-1.2);
\draw (11.5,1.5) -- (12.8,1.5);
\draw (11.5,-1.5) -- (12.8,-1.5);
%
%
\end{tikzpicture}
}
\caption{Domain $\Omega_t$ with sector $S(t,\theta_t)$.}
\label{fig:settore}
\end{figure}

\begin{proof}[Proof of Proposition \ref{prop domain of holomorphy}]
Since $T$ is holomorphic at $t$, it is possible to represent $T$ as a power series having radius of convergence $r$ for some $r\in (0,t)$. This implies that
\begin{align}\label{power series}
T(z)=\sum_{k=0}^\infty (z-t)^k\frac{T^{(k)}(t)}{k!} \qquad \mbox{ for all } z\in B(t,r).
\end{align}
We start by showing that we can choose
\[
\Omega_t\coloneqq \bigcup_{n\in\N}\,\,\,\,\,\bigcup_{nt\,\le\, s\,<\,(n+1)t} B(s,nr),
\]
(see also Figure \ref{fig:settore}). In this case, assertions (i) is satisfied by construction and assertion (ii) follows from the fact that, for each $s\in\R_{\ge t}$ and $n\in\N$,
\[
B(ns,nr)=\sum_{j=1}^n B(s,r).
\]
By Lemma \ref{lemma composition of derivatives}, the equality
\[
T^{(k)}(s)=T^{(k)}(t)T(s-t)=T(s-t)T^{(k)}(t)
\]
holds true for all $s\in\R_{\ge t}$ and $k\in\N_0$. Thus, since 
\[
\Big(\sum_{k=0}^\infty (z-s)^k\frac{T^{(k)}(t)}{k!}\Big)T(s-t)=\sum_{k=0}^\infty (z-s)^k\frac{T^{(k)}(t)T(s-t)}{k!}=\sum_{k=0}^\infty (z-s)^k\frac{T^{(k)}(s)}{k!},
\]
the series on the right-hand side has radius of convergence at least equal to $r$, which shows that $T$ can be extended holomorphically to $B(s,r)$. The same reasoning shows that $T$ can be extended holomorphically on $\bigcup_{s\ge t} B(s,r)$. 
We now have to show that $T$ can be extended holomorphically to $B(nt,nr)$ for every $n\in\N$. This follows from the next computation, which is based on Cauchy product of power series and Merten's theorem. For $z\in(t-r,t+r)$ the series \eqref{power series} converges absolutely, hence
\begin{equation*}
\begin{aligned}[t]
T(nz)&= \overbrace{T(z) \cdots T(z)}^{n \mbox{ times}}=\prod_{j=1}^n\left(\sum_{k_j=0}^\infty (z-t)^{k_j}\frac{T^{(k_j)}(t)}{k_j!}\right) \\
&=\sum_{k_1=0}^\infty\sum_{k_2=0}^{k_1}\cdots\sum_{k_n=0}^{k_{n-1}}\left((z-t)^{k_n}\frac{T^{(k_n)}(t)}{k_n!}\right)\cdot\left((z-t)^{k_{n-1}-k_n}\frac{T^{(k_{n-1}-k_n)}(t)}{(k_{n-1}-k_n)!}\right)\cdots\\
\MoveEqLeft[-22]\cdots\left((z-t)^{k_{1}-k_2}\frac{T^{(k_1-k_2)}(t)}{(k_1-k_2)!}\right)\\
&=\sum_{k_1=0}^\infty(z-t)^{k_1}T^{(k_1)}(nt)\sum_{k_2=0}^{k_1}\cdots\sum_{k_n=0}^{k_{n-1}}\frac{1}{k_n!(k_{n-1}-k_n)!\cdots(k_1-k_2)!}\\
&=\sum_{k_1=0}^\infty(z-t)^{k_1}T^{(k_1)}(nt)\frac{n^{k_1}}{k_1!}
=\sum_{k=0}^\infty (nz-tn)^k\frac{T^{(k)}(nt)}{k!},
\end{aligned}
\end{equation*}
where we used Lemma \ref{lemma composition of derivatives} and the binomial theorem. The last series, which is centred at $nt$, has radius of convergence equal at least to $nr$, therefore $T$ extends holomorphically to $B(nt,nr)$. This, combined with what shown above, implies that $T$ extends holomorphically on $\Omega_t$. 
At this point, $S(t,\theta_t)\subset \Omega_t$ by choosing $\theta_t=\arcsin(r/t)$, which proves assertion (iii). It remains to prove assertion (iv). Firstly, take $h\in\rpl$ and observe that the maps $\Omega_t\rightarrow \caL(X)$ given by
\[
z\mapsto T(h)T(z) \qquad\mbox{ and }\qquad z\mapsto T(h+z)
\]
are both holomorphic and coincide on $\R_{\ge t}$, and consequently on the entire $\Omega_t$ by the identity theorem for holomorphic functions. Hence, $T(h)T(z)=T(h+z)$ whenever $z\in\Omega_t$ and $h\in\rpl$. Fix now some $w\in\Omega_t$ and consider the maps $\Omega_t\rightarrow \caL(X)$ given by
\[
z\mapsto T(w)T(z) \qquad\mbox{ and }\qquad z\mapsto T(w+z).
\]
These maps are holomorphic as well and they coincide on $\R_{\ge t}$. Using the identity theorem a second time, the claimed functional equation in assertion (iv) follows.
\end{proof}

An immediate consequence of Proposition \ref{ev analytic} and Proposition \ref{prop domain of holomorphy} is the following result.

\begin{corolla}
Let $T:\rpl\rightarrow\caL(X)$ be an individually eventually real analytic function. If $T$ has the semigroup property, then there exist some $t\ge 0$ and $\theta\in(0,\pi/2]$ such that $T$ extends holomorphically to the sector $S(t,\theta)$.
\end{corolla}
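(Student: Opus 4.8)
The plan is to chain together the two propositions that immediately precede the statement. By Proposition \ref{ev analytic}, the hypothesis that $T$ is individually eventually real analytic upgrades to uniform eventual real analyticity: there is a single $t_0\ge 0$ such that for every $x\in X$ the orbit $T_x$ lies in $\uC^\omega(\R_{>t_0};X)$. The hypotheses of Proposition \ref{prop domain of holomorphy} demand a strictly positive starting time, so if $t_0=0$ I would replace $t_0$ by any $t_0'>0$; the orbits remain real analytic on the smaller interval $\R_{>t_0'}$, and hence without loss of generality we may assume $t_0>0$.

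Now $T$ has the semigroup property and satisfies $T_x\in\uC^\omega(\R_{>t_0};X)$ for all $x\in X$ with $t_0>0$, so Proposition \ref{prop domain of holomorphy} applies directly. It produces, for each $t\in\R_{>t_0}$, a domain $\Omega_t\subset\C$ with $\R_{\ge t}\subset\Omega_t$ to which $T$ extends holomorphically, together with an angle $\theta_t\in(0,\pi/2]$ such that the sector $S(t,\theta_t)$ is contained in $\Omega_t$ (assertion (iii) of that proposition). Fixing any $t\in\R_{>t_0}$ and putting $\theta\coloneqq\theta_t$, the holomorphic extension restricted to $S(t,\theta)$ is exactly the desired holomorphic extension of $T$ to a sector, and since $t>t_0>0$ in particular $t\ge 0$, as claimed.

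I expect there to be no real obstacle: all the substantive work lives in Propositions \ref{ev analytic} and \ref{prop domain of holomorphy}. The only point needing a sentence of care is reconciling the possibly-zero starting time coming out of Proposition \ref{ev analytic} with the strictly positive starting time required by Proposition \ref{prop domain of holomorphy}, which is dispatched by the trivial shrinking of the interval above. If desired, one may additionally observe that the extension so obtained conserves the semigroup property on $S(t,\theta)\cup\rpl$ by assertion (iv) of Proposition \ref{prop domain of holomorphy}, although this is not part of the assertion to be proved.
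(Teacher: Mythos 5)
Your proposal is correct and follows exactly the route the paper intends: the corollary is stated as an immediate consequence of Proposition \ref{ev analytic} and Proposition \ref{prop domain of holomorphy}, and your chaining of the two (including the harmless adjustment to a strictly positive starting time) is precisely that argument.
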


\section*{Acknowledgement} The author expresses his gratitude to Ljudevit Palle for introducing him to the \textit{extended seminorms} during a very inspiring discussion about the differentiability case. It should be stressed that the idea to interpret the difference quotient as an extended seminorm is due to him. The author thanks his supervisor Markus Haase and his colleague Alexander Dobrick for their numerous comments and corrections, which improved strongly both the introduction and the readability and the structure of this article overall. The author is grateful to Rainer Nagel for giving him the possibility of spending a week in Tübingen to discuss the content of this article in its very preliminary stage. The author wants to thank Henrik Kreidler, Florian Pannasch, Sascha Trostorff and Marcin Wnuk. Their thorough reports and different suggestions have led to several improvements of the article. At last, a special thanks goes to Jochen Glück for his constant encouragement and enthusiasm.

\bibliographystyle{plain}
\bibliography{literature}

\begin{thebibliography}{10}

\bibitem{barta1}
Tom\'{a}\v{s} B\'{a}rta.
\newblock Smooth solutions of {V}olterra equations via semigroups.
\newblock {\em Bull. Aust. Math. Soc.}, 78(2):249--260, 2008.

\bibitem{barta}
Tom\'{a}\v{s} B\'{a}rta.
\newblock Two notes on eventually differentiable families of operators.
\newblock {\em Comment. Math. Univ. Carolin.}, 51(1):19--24, 2010.

\bibitem{batty1}
Charles J.~K. Batty.
\newblock Differentiability and growth bounds of solutions of delay equations.
\newblock {\em J. Math. Anal. Appl.}, 299(1):133--146, 2004.

\bibitem{batty}
Charles J.~K. Batty.
\newblock Differentiability of perturbed semigroups and delay semigroups.
\newblock In {\em Perspectives in operator theory}, volume~75 of {\em Banach
  Center Publ.}, pages 39--53. Polish Acad. Sci. Inst. Math., Warsaw, 2007.

\bibitem{beer}
Gerald Beer.
\newblock Norms with infinite values.
\newblock {\em J. Convex Anal.}, 22(1):37--60, 2015.

\bibitem{beerhoffman}
Gerald Beer and Michael~J. Hoffman.
\newblock The {L}ipschitz metric for real-valued continuous functions.
\newblock {\em J. Math. Anal. Appl.}, 406(1):229--236, 2013.

\bibitem{dgk}
Daniel Daners, Jochen Gl\"{u}ck, and James~B. Kennedy.
\newblock Eventually and asymptotically positive semigroups on {B}anach
  lattices.
\newblock {\em J. Differential Equations}, 261(5):2607--2649, 2016.

\bibitem{EnNa}
Klaus-Jochen Engel and Rainer Nagel.
\newblock {\em One-parameter semigroups for linear evolution equations}, volume
  194 of {\em Graduate Texts in Mathematics}.
\newblock Springer-Verlag, New York, 2000.
\newblock With contributions by S. Brendle, M. Campiti, T. Hahn, G. Metafune,
  G. Nickel, D. Pallara, C. Perazzoli, A. Rhandi, S. Romanelli and R.
  Schnaubelt.

\bibitem{engelking}
Ryszard Engelking.
\newblock {\em General topology}, volume~6 of {\em Sigma Series in Pure
  Mathematics}.
\newblock Heldermann Verlag, Berlin, second edition, 1989.
\newblock Translated from the Polish by the author.

\bibitem{HiPh}
Einar Hille and Ralph~S. Phillips.
\newblock {\em Functional analysis and semi-groups}.
\newblock American Mathematical Society Colloquium Publications, vol. 31.
  American Mathematical Society, Providence, R. I., 1957.
\newblock rev. ed.

\bibitem{iley}
Philippa~S. Iley.
\newblock Perturbations of differentiable semigroups.
\newblock {\em J. Evol. Equ.}, 7(4):765--781, 2007.

\bibitem{kelley}
John~L. Kelley.
\newblock {\em General topology}.
\newblock Springer-Verlag, New York-Berlin, 1975.
\newblock Reprint of the 1955 edition [Van Nostrand, Toronto, Ont.], Graduate
  Texts in Mathematics, No. 27.

\bibitem{KrPa}
Steven~G. Krantz and Harold~R. Parks.
\newblock {\em A primer of real analytic functions}.
\newblock Birkh\"{a}user Advanced Texts: Basler Lehrb\"{u}cher. [Birkh\"{a}user
  Advanced Texts: Basel Textbooks]. Birkh\"{a}user Boston, Inc., Boston, MA,
  second edition, 2002.

\bibitem{pazya}
A.~Pazy.
\newblock On the differentiability and compactness of semigroups of linear
  operators.
\newblock {\em J. Math. Mech.}, 17:1131--1141, 1968.

\bibitem{pazy}
A.~Pazy.
\newblock {\em Semigroups of linear operators and applications to partial
  differential equations}, volume~44 of {\em Applied Mathematical Sciences}.
\newblock Springer-Verlag, New York, 1983.

\bibitem{salasgarcia}
David Salas and Sebasti\'{a}n Tapia-Garc\'{\i}a.
\newblock Extended seminorms and extended topological vector spaces.
\newblock {\em Topology Appl.}, 210:317--354, 2016.

\bibitem{schaefer}
Helmut~H. Schaefer.
\newblock {\em Topological vector spaces}.
\newblock Springer-Verlag, New York-Berlin, 1971.
\newblock Third printing corrected, Graduate Texts in Mathematics, Vol. 3.

\bibitem{tao}
Terence Tao.
\newblock {\em Analysis. {II}}, volume~38 of {\em Texts and Readings in
  Mathematics}.
\newblock Hindustan Book Agency, New Delhi; Springer, New Delhi, third edition,
  2016.

\end{thebibliography}

\end{document}